\documentclass[11pt,reqno]{amsart}
\usepackage{fullpage,times,graphicx,amssymb,amsmath,psfrag,xcolor,natbib}
\usepackage{booktabs} % for professional tables
% For the unfilled heart card symbol
%\usepackage{txfonts}
\DeclareSymbolFont{extraup}{U}{zavm}{m}{n}
\DeclareMathSymbol{\varheartsuit}{\mathalpha}{extraup}{86}
\DeclareMathSymbol{\Diamondblack}{\mathalpha}{extraup}{87}
% For my special matrix with card symbols
\usepackage{blkarray}
% For nice subfigures
\usepackage{subcaption}
\usepackage{multirow}
% For special supplementary Figures and Tables names
\newcommand{\beginsupplement}{%
	\setcounter{table}{0}
	\renewcommand{\thetable}{S\arabic{table}}%
	\setcounter{figure}{0}
	\renewcommand{\thefigure}{S\arabic{figure}}%
	\setcounter{algorithm}{0}
	\renewcommand\thealgorithm{S\arabic{algorithm}}
}
\usepackage[colorlinks,citecolor=bbluegray,linkcolor=ddarkbrown,urlcolor=blue,breaklinks]{hyperref}
% For algorithms
\usepackage{algorithmic,algorithm}

% Test for pushing page number down
\newcommand{\pushpagenumber}{%
	\setlength{\footskip}{35pt}
}
\usepackage[off]{auto-pst-pdf} % Switch to this if no figure recompilation necessary

\oddsidemargin 0in \evensidemargin 0in \textwidth 6.5in \topmargin -5ex \textheight 9.2in
% Numbering (by section, then sequential)
\newtheorem{theorem}{Theorem}[section]
\newtheorem{proposition}[theorem]{Proposition}
\newtheorem{definition}[theorem]{Definition}
\newtheorem{lemma}[theorem]{Lemma}

\renewenvironment{proof}{\textbf{Proof.}}{\QED\bigskip}

% For algorithms
\usepackage{algorithmic,algorithm}

%\usepackage[square]{natbib}

% Colors
\definecolor{ddarkbrown}{rgb}{0.5,0.2,0.05} \definecolor{bbluegray}{rgb}{0.05,0,0.5}

% some traditional defintions that can be blamed on craig barratt
\newcommand{\BEAS}{\begin{eqnarray*}}
\newcommand{\EEAS}{\end{eqnarray*}}
\newcommand{\BEA}{\begin{eqnarray}}
\newcommand{\EEA}{\end{eqnarray}}
\newcommand{\BEQ}{\begin{equation}}
\newcommand{\EEQ}{\end{equation}}
\newcommand{\BIT}{\begin{itemize}}
\newcommand{\EIT}{\end{itemize}}
\newcommand{\BNUM}{\begin{enumerate}}
\newcommand{\ENUM}{\end{enumerate}}

% arash added these two
\newcommand{\BA}{\begin{array}}
\newcommand{\EA}{\end{array}}

% Refs, etc

% text abbrevs

\newcommand{\eg}{{\it e.g.}}
\newcommand{\ie}{{\it i.e.}}
\newcommand{\ones}{\mathbf 1}

% std math stuff
\newcommand{\reals}{{\mathbb R}}

\newcommand{\posints}{\mbox{\bf N}}

\newcommand{\symm}{{\mbox{\bf S}}}  % symmetric matrices
  % symmetric matrices

% lin alg stuff

%\newcommand{\Rank}{\mathop{\bf Rank}}

\newcommand{\diag}{\mathop{\bf diag}}

%XXX dumb question: do we not use \diag(...) to form diagonal or 
%block diagonal matrices?
\newcommand{\idm}{\mathbf{I}}

% probability stuff

% convexity & optimization stuff

\newcommand{\QED}{~~\rule[-1pt]{6pt}{6pt}}

\newcommand{\argmin}{\mathop{\rm argmin}}

\newcommand{\argmax}{\mathop{\rm argmax}}

%the following ones seem way too specialized to be here...
%let's omve them to a new file called defs2.tex or something like that

% defs for cones & generalized inequalities

% From Alex

\providecommand{\cB}{\mathcal{B}}
\providecommand{\cP}{\mathcal{P}}
\providecommand{\cR}{\mathcal{S}_{R}}
\providecommand{\cZ}{\mathcal{Z}}
\providecommand{\cPH}{\mathcal{PH}}

\providecommand{\cM}{\mathcal{M}}
\providecommand{\PAP}{\Pi A \Pi^T}
\providecommand{\cH}{\mathcal{H}}

\providecommand{\st}{\text{such that}}
\providecommand{\find}{\text{find}}

\newcommand{\argsort}{\mathop{\rm argsort}}

\let \oldsection \section
\renewcommand{\section}{\vspace{3ex plus 1ex}\oldsection}

\begin{document}
\title{Robust Seriation and Applications to Cancer Genomics}

\author{Antoine Recanati}
\address{CNRS \& D.I., UMR 8548,\vskip 0ex
\'Ecole Normale Sup\'erieure, Paris, France.}
\email{antoine.recanati@inria.fr}
\author{Nicolas Servant}
\address{Institut Curie, PSL Research University, INSERM, U900, F-75005 Paris, France\\
	MINES ParisTech, PSL Research University, CBIO - Centre for Computational Biology, F-75006 Paris, France}
\email{Nicolas.Servant@curie.fr}
\author{Jean-Philippe Vert}
\address{MINES ParisTech, PSL Research University, CBIO - Centre for Computational Biology, F-75006 Paris, France\\
	Institut Curie, PSL Research University, INSERM, U900, F-75005 Paris, France\\
	Ecole Normale Supérieure, Department of Mathematics and Applications, CNRS, PSL Research University, F-75005 Paris, France}
\email{Jean-Philippe.Vert@Mines-ParisTech.fr}
\author{Alexandre d'Aspremont}
\address{CNRS \& D.I., UMR 8548,\vskip 0ex
\'Ecole Normale Sup\'erieure, Paris, France.}
\email{aspremon@ens.fr}

\keywords{seriation, ordering, de novo sequencing, permutations, relaxations}
\date{\today}
\subjclass[2018]{}

\begin{abstract}
	The seriation problem seeks to reorder a set of elements given pairwise similarity information, so that elements with higher similarity are closer in the resulting sequence.
	When a global ordering consistent with the similarity information exists, an exact spectral solution recovers it in the noiseless case and seriation is equivalent to the combinatorial 2-SUM problem over permutations, for which several relaxations have been derived. However, in applications such as DNA assembly, similarity values are often heavily corrupted, and the solution of 2-SUM may no longer yield an approximate serial structure on the elements. We introduce the robust seriation problem and show that it is equivalent to a modified 2-SUM problem for a class of similarity matrices modeling those observed in DNA assembly. We explore several relaxations of this modified 2-SUM problem and compare them empirically on both synthetic matrices and real DNA data. We then introduce the problem of seriation with duplications, which is a generalization of Seriation motivated by applications to cancer genome reconstruction. We propose an algorithm involving robust seriation to solve it, and present preliminary results on synthetic data sets.
	
\end{abstract}

\maketitle
	
\section{Introduction}
In the seriation problem, we are given a similarity matrix between a set of $n$ elements, which we assume to have a serial structure, \ie, which can be ordered along a chain where the similarity between elements decreases with their distance within this chain. 
The problem has its roots in archeology where it was used to find the chronological order of a set of graves based on the artifacts they share \citep{Robi51}. It also has applications in, \eg, envelope reduction \citep{Barn95}, bioinformatics \citep{atkins1996physical,cheema2010thread,jones2012anges} and in DNA sequencing \citep{Meid98,Garr11,recanati2016spectral}. The main structural hypothesis on similarity matrices related to robust seriation is the concept of strong $R$-matrix, which we introduce below.
\begin{definition}\label{def:strong-R-mat}
	We say that $A\in\symm_n$ is a strong-R-matrix (or strong Robinson matrix) iff it is symmetric and satisfies
	$A_{ij}\leq A_{kl}$ for all $(i,j,k,l)$ such that $|i-j|\geq |k-l|$.
\end{definition}
Here, $\symm_n$ denotes the set of real symmetric matrices of dimension~$n$.
Definition~\ref{def:strong-R-mat} is more restrictive than the usual R-matrix property used in \citet{Atkins,Fogel}, which only requires the entries of the matrix to decrease when moving away from the diagonal \textit{on a given row or column}. For strong-R matrices, we impose that the entries on a given diagonal are no greater than any entry located on the previous diagonals (see Figure~\ref{fig:RvsStrongRmat}).
\begin{figure}[hbt]
	%\vskip 0.2in
	\begin{center}
		\begin{subfigure}[htb]{0.3\textwidth}
			\includegraphics[width=\textwidth]{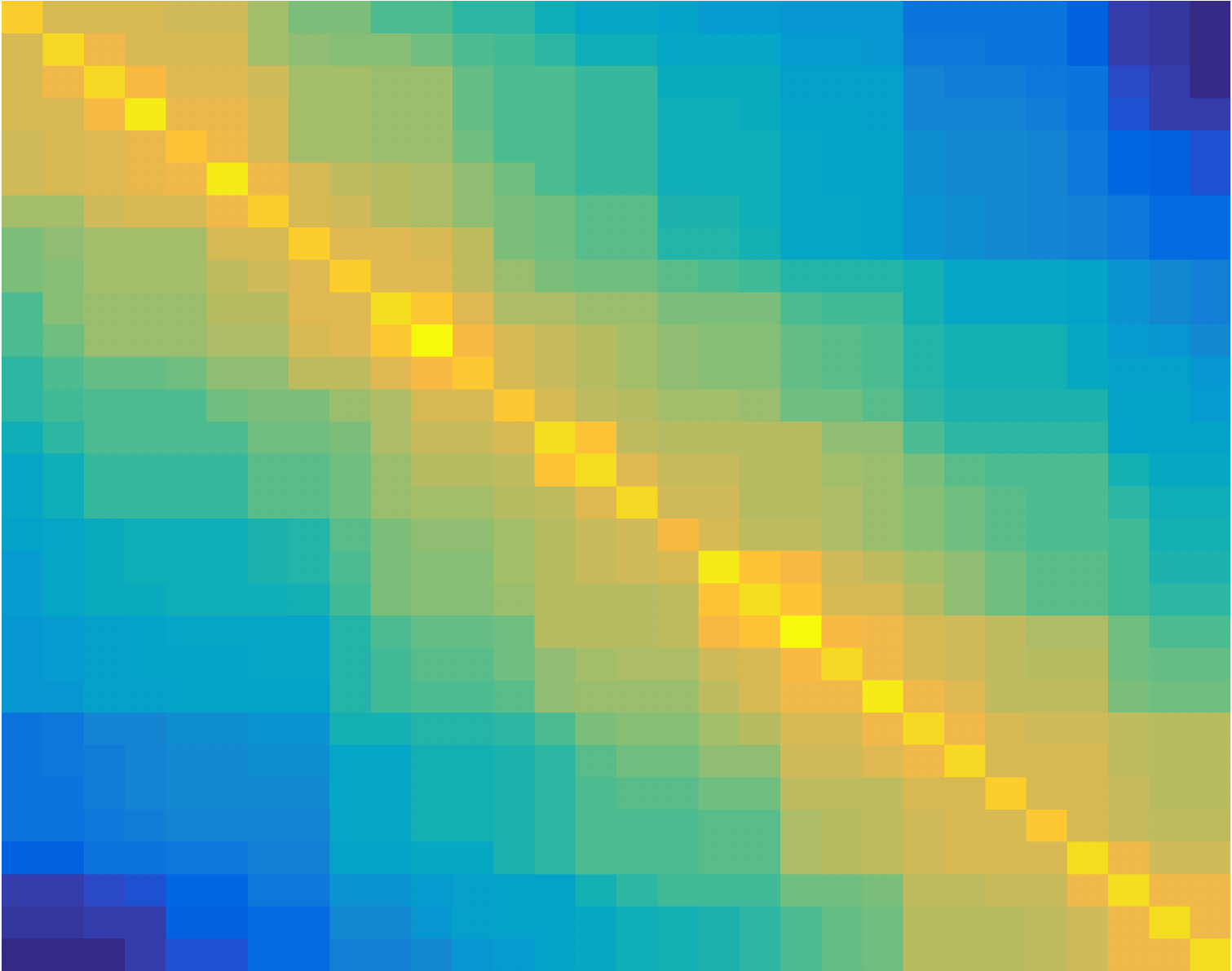}
			\caption{R-matrix}\label{subfig:softRmat}
		\end{subfigure}
		\begin{subfigure}[htb]{0.3\textwidth}
			\includegraphics[width=\textwidth]{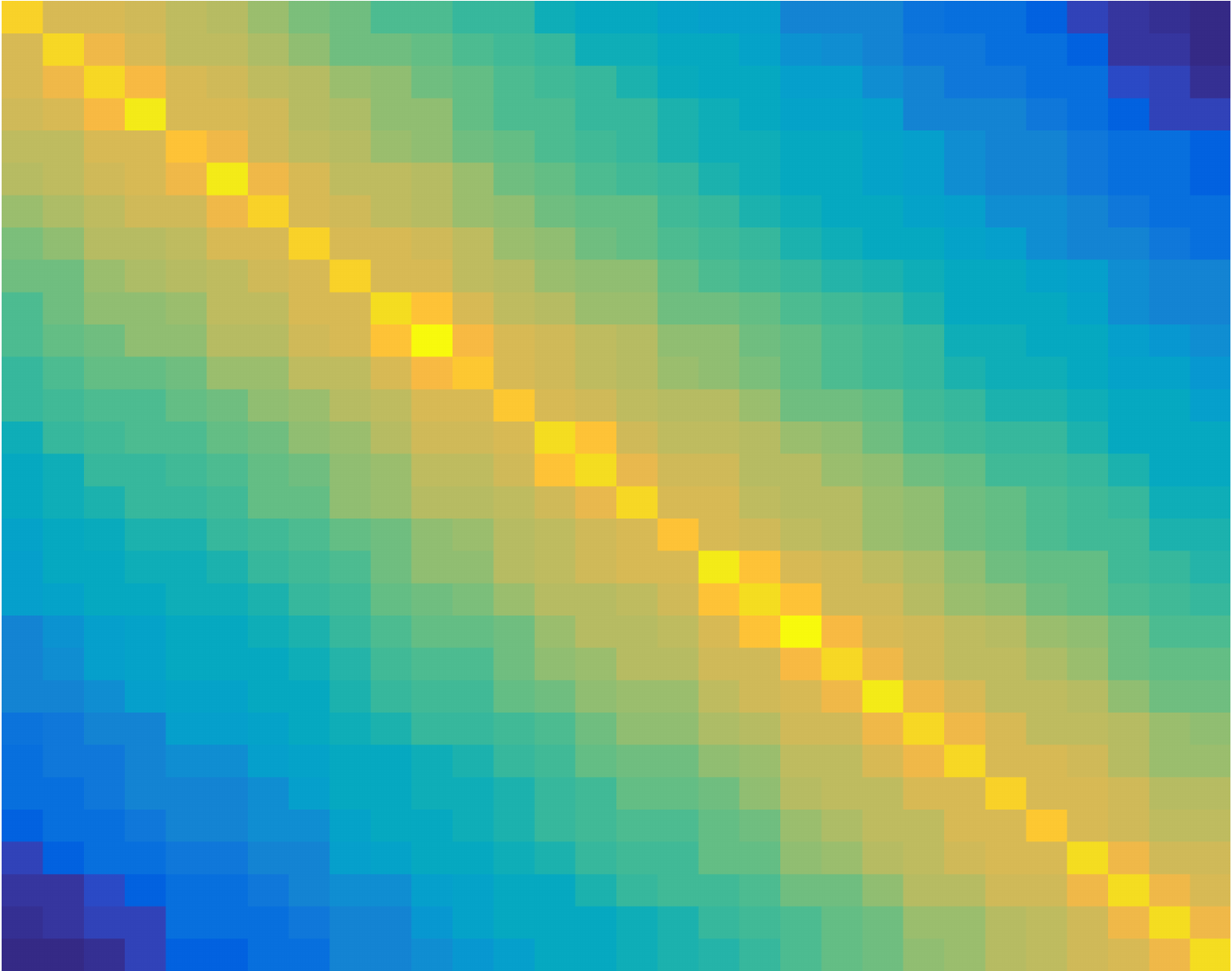}
			\caption{strong R-matrix}\label{subfig:strongRmat}
		\end{subfigure}
		\begin{subfigure}[htb]{0.3\textwidth}
			\includegraphics[width=\textwidth]{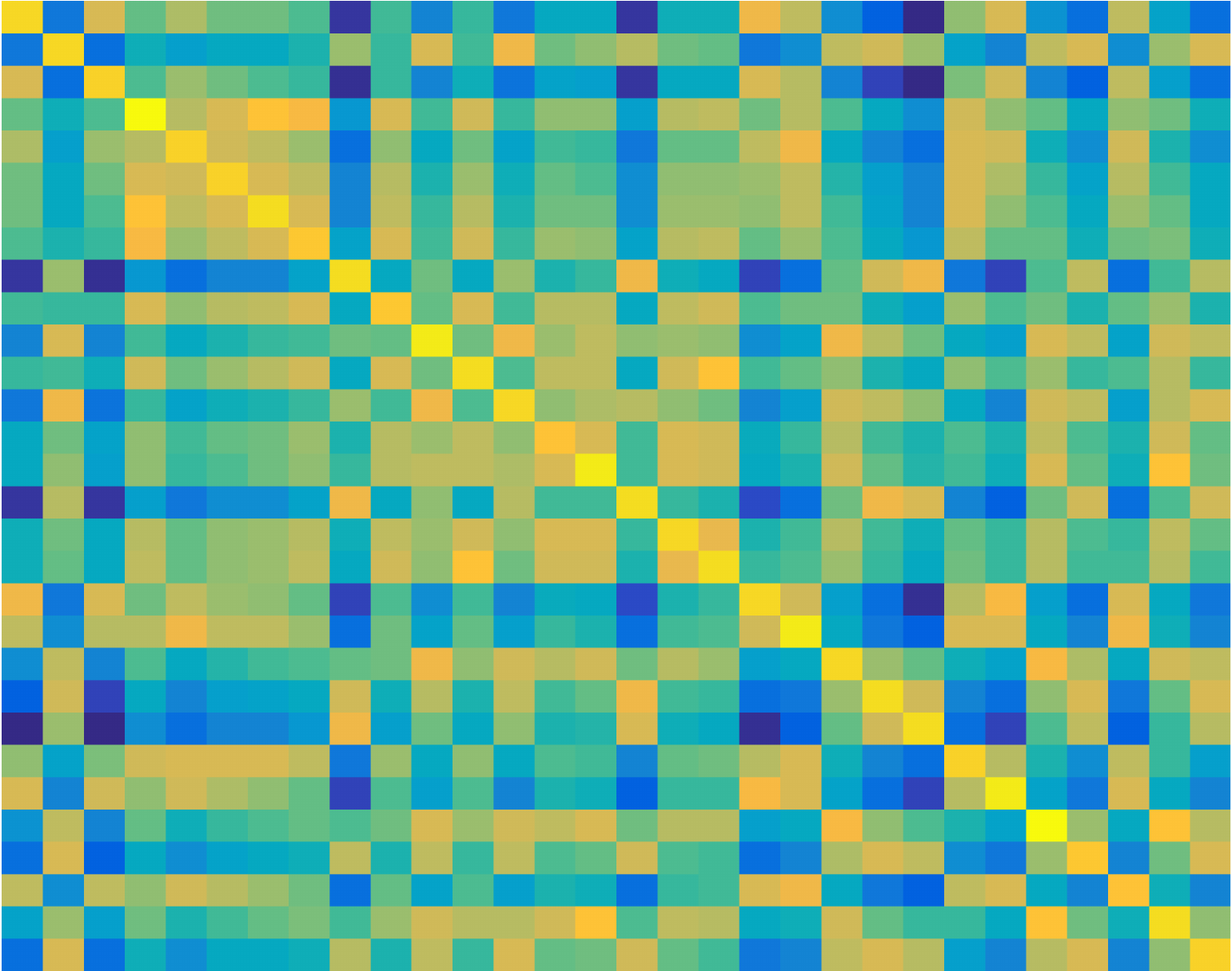}
			\caption{permuted strong-R}\label{subfig:2dRperm}
		\end{subfigure}
		\caption{A R-matrix (\textsc{A}) and its projection on the set of strong-R matrices (\textsc{B}). 
			A pre-strong-R matrix (\textsc{C}) is a strong-R matrix up to a permutation of the rows and columns.
			Seriation seeks to recover the R-matrix (\textsc{B}) from a randomly permuted observation (\textsc{C}).
		}
		\label{fig:RvsStrongRmat}
	\end{center}
	\vskip -0.2in
\end{figure}

In what follows, we write $\cR^n$ the set of strong-R-matrices of size $n$, and $\cP_n$ the set of permutations of $n$ elements. A permutation can be represented by a vector $\pi$ (lower case) or a matrix $\Pi \in \{0,1\}^{n \times n}$ (upper case) defined by $\Pi_{ij} = 1$ iff $\pi(i) = j$, and $\pi = \Pi \pi_{Id}$ where $\pi_{Id} = (1, \dots, n)^T$. We refer to both representations by $\cP_n$ and may omit the subscript $n$ whenever the dimension is clear from the context.
We say that $A \in \symm_n$ is pre-$\cR$ if there exists a permutation $\Pi \in \cP$ such that the matrix $\Pi A \Pi^T$ (whose entry $(i,j)$ is $A_{\pi(i),\pi(j)}$) is a strong-R-matrix, and the seriation problem seeks to recover this permutation $\Pi$, \ie, solve
\begin{align}\tag{Seriation} \label{eqn:seriation}
\BA{ll}
\find &  \Pi \in \cP\\
\st & \Pi A \Pi^T  \in \cR
\EA
\end{align}
in the variable $\Pi \in \cP$. This is illustrated in Figure~\ref{fig:RvsStrongRmat}.
 Given $A \in \symm_n$, 2-SUM is an optimization problem over permutations, written
\begin{align}
\tag{2-SUM} \label{eqn:2sum}
\BA{ll}
\mbox{minimize}  & \sum_{i,j=1}^n 
A_{ij} |\pi_i - \pi_j|^2 \\
\st & \pi \in \cP_n
\EA
\end{align}
Remark that the search space $\cP_n$ is discrete and of cardinality $n!$, thus preventing the use of exhaustive search or greedy branch and bound methods for \ref{eqn:seriation} or \ref{eqn:2sum} when $n$ gets large \citep{hahsler2008getting}.
Yet, for pre-$\cR$ matrices, \ref{eqn:seriation} is equivalent to \ref{eqn:2sum} \citep{Fogel}, which can be solved exactly using a spectral relaxation (Supplementary Algorithm~\ref{alg:spectral}) in polynomial time \citep{Atkins}.

Problem \ref{eqn:2sum} is also a particular case of the Quadratic Assignment Problem \citep{koopmans1957QAP}, written
\begin{align}\tag{QAP(A,B)} \label{eqn:QAP}
\min_{\pi \in \cP_n}\:\: & \sum_{i,j=1}^n 
A_{i,j}B_{\pi(i),\pi(j)}
\end{align}
with $B_{ij} = |i - j|^2$.
\citet{laurent2015QAPsolvable} showed that for pre-$\cR$ matrices, \ref{eqn:seriation} is equivalent to \ref{eqn:QAP} when $-B \in \cR^n$, i.e. when $B$ has increasing values when moving away from the diagonal, and has constant values across a given diagonal (i.e. $B$ is a Toeplitz matrix). This includes p-SUM problems, for $p >0$, corresponding to $B_{ij} = |i-j|^p$. The case $p=1$ is also known as the minimum linear arrangement problem (MLA) \citep{George:1997aa}.
For pre-$\cR$ matrices, these problems are all equivalent and can be solved by the spectral algorithm of \citet{Atkins}, detailed in the Supplementary Material (Algorithm~\ref{alg:spectral}).
However, when $A$ is not pre-$\cR$, the \ref{eqn:seriation} problem has multiple local solutions, and the spectral algorithm does not necessarily find a global optimum for \ref{eqn:2sum}, p-SUM or \ref{eqn:QAP} with $B$ a Toeplitz, negated R matrix. In fact, these problems are NP-hard in general \citep{sahni1976p}.

More recently, several relaxations have been proposed to tackle \ref{eqn:2sum} and \ref{eqn:QAP}, although there are no approximation bounds in the general case \citep{lyzinski2016graph}. \citet{vogelstein2011fast} used the Frank-Wolfe algorithm to minimize the objective of \ref{eqn:QAP} over the convex hull of the permutation matrices, namely the Birkhoff polytope $\cB$. \citet{Fogel} presented a convex relaxation of \ref{eqn:2sum} in~$\cB$, and used a quadratic programming approach where the variable's membership to $\cB$ is enforced through linear constraints (instead of the implicit projection of the Frank-Wolfe algorithm).
\citet{lim2014beyond} proposed a similar relaxation in the convex hull of the set of permutation vectors, the Permutahedron $\cPH_n$, represented with $\Theta(n\log n)$ variables and constraints, instead of $\Theta(n^2)$ for permutation matrices, thanks to an extended formulation by \citet{Goem09}.
All these relaxations for \ref{eqn:2sum} suffer from a symmetry problem, because flipping permutations leaves the objective unchanged, and the minimum of \ref{eqn:2sum} is achieved for a vector proportional to $\ones = (1, \dots, 1)^T$, which lies in the center of the convex hull of permutation vectors.
To overcome this issue, constraints can be added to the problem, corresponding to either {\it a priori} kwowledge, or to pure ``tie-breaking'', \eg, $\pi_1 + 1 \leq \pi_n$, ensuring that the center is excluded from the constraint set, thus breaking symmetry without loss of generality. \citet{lim2014beyond} stated that a Frank-Wolfe algorithm could also be used for \ref{eqn:2sum} in $\cPH$ if no other constraint but the tie-breaking was enforced, thanks to a specific linear minimization oracle, thus implicitly enforcing membership to $\cPH$ without imposing the constraints from \citet{Goem09}.
\citet{lim2016box} generalized the use of the representation of \citet{Goem09} for $\cPH$ to tackle \ref{eqn:QAP}, with a coordinate descent algorithm and a continuation scheme to move away from the center of the convex hull of permutations.
\citet{evangelopoulos2017graduated} proposed a Frank-Wolfe algorithm in $\cPH$ with a continuation scheme (instead of a tie-breaking constraint) to tackle \ref{eqn:2sum} and avoid the center. They also discussed problems of the form \eqref{eqn:QAP} where $B_{ij} = \text{Pseudo-Huber}(|i - j|)$ \citep{evangelopoulos2017unpublished}, which helps in solving robust seriation as we will see below.

Our contribution here is twofold. In Section~\ref{sec:RobustSeriation}, we introduce the robust seriation problem, motivated by applications to DNA sequencing.
We show that for DNA data obeying a simple model which takes repeats into account, robust seriation is equivalent to Robust 2-SUM, which is a QAP problem similar to \ref{eqn:2sum}, where the squared distance to the diagonal that appears in the loss function is truncated. This truncated quadratic can be relaxed as a Huber loss. We present experiments to compare existing and new algorithmic approaches to solve this problem on two datasets: synthetic data following our simple model, and real data from an {\it E. coli} genome sequenced with third generation sequencing tools.

In Section~\ref{sec:SeriationDuplications}, we introduce the problem of seriation with duplications, which is a generalization of seriation with additional affine constraints on the solution, and apply it to the analysis of Hi-C data from cancer genome. The Hi-C protocol combines proximity ligation and sequencing techniques to investigate the spatial organization of genomes \citep{Lieberman2009Comprehensive}. While several methods have been proposed to perform genome assembly from normal (haploid or diploid) Hi-C data \citep{selvaraj2013, Marie-Nelly2014, dudchenko2017novo}, to our knowledge, none of these methods were applied to cancer genomes reconstruction with duplicated subsequences. We therefore propose an alternating projection approach where one of the steps reduces to solving robust seriation. We detail preliminary experimental results on synthetic data.

\section{Robust Seriation}
\label{sec:RobustSeriation}
Classical \ref{eqn:seriation} is written as a feasibility problem: find the permutation that reorders the input matrix into an Robinson matrix. When $A$ is pre-$\cR$, solving \ref{eqn:2sum} yields this permutation. However, when $A$ is not pre-$\cR$, the matrix $A$ reordered using the permutation that minimizes \ref{eqn:2sum} may be far from being R. Robust seriation seeks to find the closest pre-$\cR$ matrix to $A$ and reorder it, solving instead
\begin{align}\tag{Robust Seriation} \label{eqn:robustseriation1}
\BA{ll}
\mbox{minimize} &  \|S - \Pi A \Pi^T \| \\
\st & \Pi \in \cP, \quad S \in \cR.
\EA
\end{align}
where the variable $\Pi \in \cP$ is a permutation matrix, the variable $S \in \cR$ is a strong-R-matrix, and the norm is typically either the $l_1$ norm on components or the Froebenius norm.

\subsection{Application of Seriation to Genome Sequencing}
In {\it de novo} genome sequencing, a whole DNA strand is reconstructed from randomly sampled sub-fragments (called {\it reads}) whose positions within the genome are unknown. The genome is oversampled so that all parts are covered by multiple reads with high probability. 
Overlap-Layout-Consensus (OLC) is a major assembly paradigm based on three main steps.
First, compute the overlaps between all pairs of read. This provides a similarity matrix $A$, whose entry $(i,j)$ measures how much reads $i$ and $j$ overlap (and is zero if they do not).
Then, determine the layout from the overlap information, that is to say find an ordering and positioning of the reads that is consistent with the overlap constraints. This step, akin to solving a one dimensional jigsaw puzzle, is a key step in the assembly process.
Finally, given the tiling of the reads obtained in the layout stage, the consensus step aims at determining the most likely DNA sequence that can be explained by this tiling. It essentially consists in performing multi-sequence alignments.

In the true ordering (corresponding to the sorted reads' positions along the genome), a given read overlaps much with the next one, slightly less with the one after it, and so on, until a point where it has no overlap with the reads that are further away. This makes the read similarity matrix Robinson and roughly band-diagonal (with non-zero values confined to a diagonal band). 
Finding the layout of the reads therefore fits the \ref{eqn:seriation} framework.
In practice however, there are some repeated sequences (called {\it repeats}) along the genome that induce false positives in the overlap detection tool \citep{Pop04}, resulting in non-zero similarity values outside (and possibly far away) from the diagonal band. The similarity matrix ordered with the ground truth is then the sum of a Robinson band matrix and a sparse ``noise'' matrix, as in Figure~\ref{subfig:sim_mat_out}.

Repeats longer than the overlap length are perhaps the most fundamental issue in genome assembly as they lead to ambiguous reconstructions. For example, consider the sequence ARBRCRD, where A,B,C,D,R are subsequences and R is repeated three times. The overlap constraints arising from this sequence are identical to those of ARCRBRD, therefore the overlap constraints are not sufficient to uniquely determine the layout.
Recently, long-reads sequencers such as PacBio’s SMRT and Oxford Nanopore Technology (ONT) spurred a renaissance in assembly by enabling sequencing reads over 10kbp (kilo basepairs) long, resolving many small repeats \citep{KorenOneChr}.
However, their error rate is high $(\sim15\%)$.
Thus, many assemblers include a correction module in a preprocessing step, which can help in separating repeats when the repeated copies slightly differ \citep{Pop04}.
They also use statistical models on the data generation in order to filter out the overlaps that are likely to be repeat-induced, and retrospectively inspect the overlap graph for potential errors in a greedy fashion, until the graph is ``cleaned'' and contains as few ambiguities for reconstruction as the model allows for \citep{koren2017canu,Li:Miniasm}.
When there are ambiguities, the ambiguous reads are simply removed and the obtained assembly is fragmented.

While most of these state of the art methods deal with repeats through complex pipelines involving heuristics and additional information on the data, \citet{recanati2016spectral} developed an assembler based on seriation which only uses the overlap-based similarity matrix $A$ to find the layout, computing it with the spectral algorithm (\ref{alg:spectral}). Yet, the presence of repeats often corrupts that ordering, as we illustrate in Figure~\ref{fig:twoSumWrongOnEcoli}. To overcome this issue, the method also ends up removing overlaps from the graph, yielding fragmented assemblies.

Here, we seek to apply \ref{eqn:robustseriation1} to genome sequencing, dealing with the repeats in a principled manner. We write $\cM_n(\delta, s)$ the set of matrices in $\{0,1\}^{n\times n}$ that are the sum of a band matrix of bandwidth~$\delta$ and a sparse out-of-band matrix with $s$ non-zero elements,
\begin{definition}\label{def:bd+noise-mat}
	$A \in\{0,1\}^{n\times n}$ belongs to $\cM_n(\delta, s)$ iff it is symmetric and satisfies
	$A_{ij} = 1$ for all $(i,j)$ such that $| i - j| \leq \delta$, and $\text{nnz}(A) = \left( n + (2 n - 1) \delta - \delta^2 \right) + s$.
\end{definition}
Here $\text{nnz}(A)$ is the number of non-zero elements of $A$, and the first term in the sum is the total number of elements in the bands. This means in particular $s \leq n^2 - \left( n + (2 n - 1) \delta - \delta^2 \right)$ (the total number of non-zeros cannot exceed $n^2$). In this setting, we wish to find an ordering in which most pairs of similar elements are nearby.
The \ref{eqn:2sum} objective can perform poorly here, since it strongly penalizes orderings with non-zero values far away from the diagonal, even when there is a small number of them, as we can see in Figure~\ref{fig:twoSumWrongOnEcoli}. Reducing this penalty on outliers is the goal of the robust seriation methods detailed below.
\begin{figure}[h]
	%\vskip 0.2in
	\begin{center}
% 	\centering
	\begin{subfigure}[htb]{0.33\textwidth}
		\includegraphics[width=\textwidth]{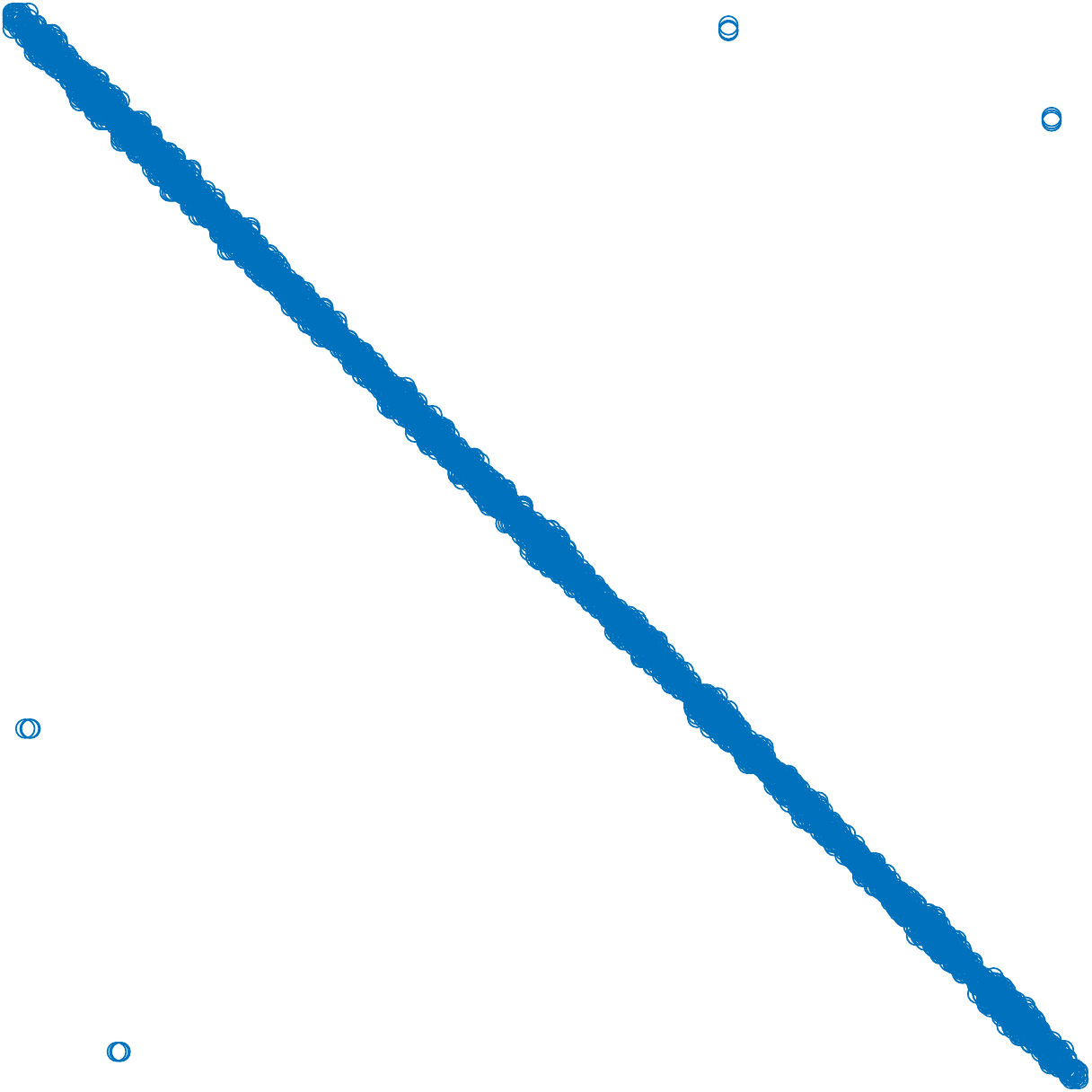}
		\caption{Ground truth}\label{subfig:sim_mat_out}
	\end{subfigure}
	\qquad 	\qquad 	\qquad
	\begin{subfigure}[htb]{0.33\textwidth}
		\includegraphics[width=\textwidth]{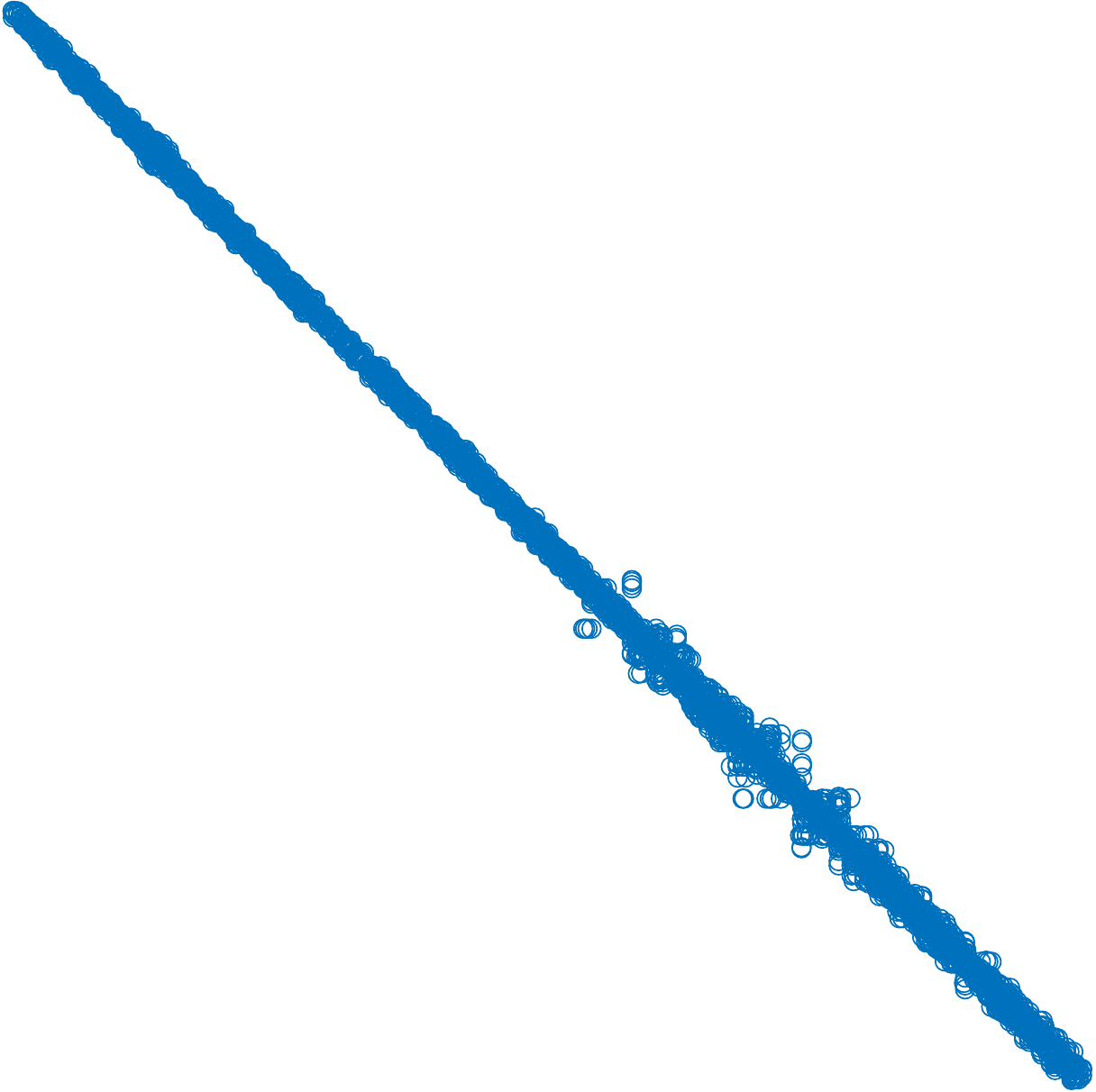}
		\caption{2SUM}\label{subfig:sim_mat_2sum}
	\end{subfigure}
		\caption{Similarity matrix from a subset of Oxford Nanopore reads of {\it E. coli} in the ordering given by the ground truth position of the reads along the genome (\ref{subfig:sim_mat_out}, left), and
			the same matrix reordered by minimizing the 2SUM objective (\ref{subfig:sim_mat_2sum}, right), which pushes the out-of-diagonals terms close to the main diagonal and yields a corrupted ordering.}
		\label{fig:twoSumWrongOnEcoli}
	\end{center}
	\vskip -0.2in
\end{figure}

\subsection{Robust 2-SUM}\label{ssec:robust2sum}
Given $A \in \symm_n$, \ref{eqn:robustseriation1} seeks to find a pre-$\cR$ matrix that is as close to $A$ as possible. 
Instead of searching directly for a perturbation of~$A$ that is pre-$\cR$, we search for a perturbation of~$A$ that yields a low \ref{eqn:2sum} score, solving
\begin{align}\tag{R2S($\lambda$)} \label{eqn:robust2SUM1}
\BA{ll}
\mbox{minimize} &  \sum_{i,j=1}^n 
S_{ij} |\pi_i - \pi_j|^2 +  \lambda \|A - S \|_1 \\
\st & \pi \in \cP, \quad S \in \symm_+.
\EA
\end{align}
where $\symm_+$ is the set of symmetric matrices with non-negative entries, and we use the $l_1$ norm on the difference between $A$ and $S$ to enforce sparsity in errors. Here, $\lambda$ is a parameter that controls the deviation of~$S$ from~$A$.
The sum is separable and the minimization in $S$ is closed form. Indeed, for a given $(i,j)$, the function $S_{ij} \rightarrow S_{ij} \Delta_{ij}^2 + \lambda |S_{ij} - A_{ij}|$ is piecewise linear, with slope $\Delta_{ij}^2 - \lambda$ for $S_{ij} \leq A_{ij}$, and $\Delta_{ij}^2 + \lambda$ for $S_{ij} \geq A_{ij}$, and is therefore minimal at $S_{ij} = A_{ij}$ if $\Delta_{ij}^2 \leq \lambda$ and $S_{ij} = 0$ otherwise (recall that $S_{ij}$ is constrained to be non-negative).
Hence, \ref{eqn:robust2SUM1} is equivalent to
\begin{align}\tag{R2SUM($\lambda$)} \label{eqn:robust2SUM2}
\BA{ll}
\mbox{minimize} &  \sum_{i,j=1}^n 
A_{ij} \min(\lambda, |\pi_i - \pi_j|^2)\\
\st & \pi \in \cP.
\EA
\end{align}
in the variable $\pi \in \cP$. We now show that for stylized genome assembly similarity matrices, if the number of reads spanning repeated regions is controlled, then solving \ref{eqn:robust2SUM2} also solves \ref{eqn:robustseriation1}.

\begin{proposition}\label{prop:RSvsR2SUM}
	%	If $A \in \symm$ is pre-M$(\delta, s)$,
	For $s \leq s_\mathrm{lim} \triangleq (n - \delta - 1)$ and  $A \in \symm_n$,
	if $A$ can be permuted to belong to $ \cM_n(\delta,s)$,
	\ie, if there is $\Pi \in \cP_n: \: \Pi A \Pi^T \in \cM_n(\delta,s)$,
	then $\Pi$ solves both \ref{eqn:robustseriation1} and  \ref{eqn:robust2SUM2} with parameter $\lambda = \delta^2$, and the $\ell_1$ norm in \ref{eqn:robustseriation1}.
\end{proposition}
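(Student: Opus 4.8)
The plan is to reduce to the case where the good permutation is the identity, and then treat the two problems separately, showing in each that the identity attains a lower bound valid for every competitor. Both objectives are invariant under simultaneous relabeling of rows and columns (conjugating $A$ by a fixed permutation just reindexes the feasible set), so I would assume without loss of generality that $\Pi$ is the identity and $A\in\cM_n(\delta,s)$ itself, and prove that the identity vector $\pi_{Id}$ minimizes \ref{eqn:robust2SUM2} and that the pair (identity, $W^{(\delta)}$) minimizes \ref{eqn:robustseriation1}, where $W^{(b)}$ denotes the $0$--$1$ band matrix $W^{(b)}_{ij}=\mathbf{1}\{|i-j|\le b\}$ (which is symmetric Toeplitz with nonincreasing profile, hence strong-R).

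For \ref{eqn:robust2SUM2} the key device is a ``distance-profile'' relaxation. For a permutation $\pi$ and each $r\in\{1,\dots,n-1\}$, let $N_r(\pi)$ be the number of off-diagonal nonzero unordered pairs $\{i,j\}$ of $A$ with $|\pi_i-\pi_j|=r$. Since $\pi$ is a bijection and only $n-r$ position-pairs sit at distance $r$, we have $N_r(\pi)\le n-r$, while $\sum_r N_r(\pi)$ equals the fixed total number of such pairs. The objective is $2\sum_r N_r(\pi)\,\min(\delta^2,r^2)$, and because the weights $\min(\delta^2,r^2)$ are nondecreasing in $r$ with capacities $n-r$ decreasing, the minimum of this relaxed problem is obtained greedily: saturate the cheapest classes $r=1,\dots,\delta$ and charge every remaining pair the capped cost $\delta^2$. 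The identity does exactly this, filling each class $r\le\delta$ with all $n-r$ band pairs and placing every out-of-band pair at cost $\delta^2$, so it attains the relaxed lower bound and is a global minimizer. Notably this step uses no bound on $s$ and relies on $\lambda=\delta^2$ matching the bandwidth.

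For \ref{eqn:robustseriation1} I would first collapse the inner minimization over $S\in\cR$ to band matrices. Strong-R matrices are precisely symmetric Toeplitz matrices with a nonincreasing profile $c_0\ge\cdots\ge c_{n-1}$, so fitting a $0$--$1$ target $B'=\Pi' A \Pi'^{T}$ in $\ell_1$ decouples across diagonals into a linear objective in $c\in[0,1]^n$ over the order simplex $1\ge c_0\ge\cdots\ge c_{n-1}\ge0$; its minimum is attained at a vertex, i.e. a threshold $c_r=\mathbf{1}\{r\le\delta'\}$, which is a band matrix $W^{(\delta')}$. Writing $M=\mathrm{nnz}(A)$, a direct count gives $\|W^{(\delta')}-B'\|_1=|\mathrm{band}(\delta')|+M-2\,|\mathrm{band}(\delta')\cap\mathrm{supp}(B')|$. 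Bounding the overlap by $|\mathrm{band}(\delta')|$ when $\delta'\le\delta$ and by $M$ when $\delta'>\delta$, this is at least $s$ in both regimes; the hypothesis enters only in the second, where $\delta'\ge\delta+1$ forces the requirement $|\mathrm{band}(\delta')|-|\mathrm{band}(\delta)|\ge 2s$, and the smallest gap, at $\delta'=\delta+1$, equals $2(n-\delta-1)=2s_{\mathrm{lim}}$. Since (identity, $W^{(\delta)}$) attains value exactly $s$, it is optimal.

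The main obstacle is the \ref{eqn:robustseriation1} lower bound, and the plan hinges on two reductions there: recognizing that strong-R means Toeplitz, so the $\ell_1$ projection onto $\cR$ collapses to a one-dimensional isotonic/threshold problem whose optimum is a band matrix, and then isolating the single critical competitor $\delta'=\delta+1$, whose cost is governed precisely by $s_{\mathrm{lim}}=n-\delta-1$. The condition $s\le s_{\mathrm{lim}}$ is exactly the threshold at which widening the recovered band by one becomes no cheaper than simply deleting the $s$ out-of-band entries.
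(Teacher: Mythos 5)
Your proposal is correct and follows the same high-level architecture as the paper's proof (reduce to the identity permutation, then treat \ref{eqn:robust2SUM2} and \ref{eqn:robustseriation1} separately by exhibiting a lower bound that the identity attains), but both key sub-arguments are executed differently, and in one case more completely. For \ref{eqn:robust2SUM2}, the paper only argues that the identity maximizes the \emph{number} of in-band terms and that every in-band term is no larger than every out-of-band term; as stated this does not account for the magnitudes of the in-band terms of a competitor. Your distance-profile argument, with the capacity constraint $N_r(\pi)\le n-r$ and the observation that the weights $\min(\delta^2,r^2)$ are nondecreasing while the identity saturates every class $r\le\delta$, supplies exactly the rearrangement/exchange step that the paper leaves implicit, so your version is the tighter one; you are also right that this half uses neither the bound on $s$ nor the $\ell_1$ choice. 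For \ref{eqn:robustseriation1}, the paper reduces the inner minimization over $S\in\cR$ to binary band matrices via a separate lemma proved by total unimodularity of a linear program; your route through the order polytope $1\ge c_0\ge\cdots\ge c_{n-1}\ge 0$, whose vertices are threshold profiles, reaches the same conclusion more directly (it does rely on first clipping the profile to $[0,1]$, which is harmless for an $\ell_1$ fit of a $0$--$1$ target but worth a sentence). The final counting, bounding the cost from below by $s$ in the two regimes $\delta'\le\delta$ and $\delta'\ge\delta+1$ and identifying $\delta'=\delta+1$ as the binding competitor that produces the threshold $s\le s_{\mathrm{lim}}=n-\delta-1$, matches the paper's bookkeeping with $n_{\text{in}}(k)$ and $n_{\text{out}}(k)$ essentially term for term. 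One small remark: your characterization of $\cR$ as symmetric Toeplitz matrices with nonincreasing profile is the correct reading of Definition~\ref{def:strong-R-mat} (equal distances force equal entries), whereas the paper's proof works with a slightly looser class allowing a mixed transition diagonal; this discrepancy does not affect either argument.
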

\begin{proof}
	Let $\delta, s$ be two positive integers such that $\delta \leq n$, $s \leq (n-\delta -1)$.
	Without loss of generality, assume that $A \in  \cM(\delta,s)$,  \ie, $\Pi = \idm$, the identity permutation (otherwise, we simply factor out the true permutation). First, let us observe that for $\lambda = \delta^2$, $\idm$ is optimal for \ref{eqn:robust2SUM2}.
	Indeed, since $A \in \{0,1\}^{n \times n}$, the objective in \ref{eqn:robust2SUM2} is the sum of $\min(\delta^2, |\pi_i - \pi_j|^2)$ over all indexes $(i,j)$ such that $A_{ij}=1$.
	This sum can be split into two terms, \\
	\[f_\text{in} = \sum_{(i,j) : A_{ij}=1\: , \: |\pi_i - \pi_j| \leq \delta} |\pi_i - \pi_j|^2,\] and \\
	\[f_\text{out} = \sum_{(i,j) : A_{ij}=1\: , \: |\pi_i - \pi_j| > \delta} \delta^2.\]
	For $\Pi = \idm$, the number of terms in $f_\text{in}$ is maximized since $A_{ij} = 1$ for all $(i,j)$ such that $|i - j| \leq \delta$ ($A \in \cM(\delta,s)$).
	The sum of the number of terms in $f_\text{in}$ and $f_\text{out}$ is equal to nnz($A$) and is invariant by permutation (therefore, the number of terms in $f_\text{out}$ is also minimized for $\Pi = \idm$)
	Since any term in $f_\text{in}$ is smaller than any term in $f_\text{out}$, $\Pi = \idm$ is optimal for \ref{eqn:robust2SUM2} with $\lambda = \delta^2$.
	
	Now, let us see that $\Pi = \idm$ is also optimal for \ref{eqn:robustseriation1}. Given $\Pi$, optimizing over $S$ in \ref{eqn:robustseriation1} yields $ S_{\Pi} = \text{Proj}_{\cR}(\PAP)$, the projection of $\PAP$ onto the set of strong-R-matrices.
	Let us assume that we use the $\ell_1$ norm in \eqref{eqn:robustseriation1}.
	Then, $S_{\Pi}$, the projection in $\ell_1$ norm of the binary matrix $\PAP$, is also binary (see Lemma~\ref{lemm:proj-strongR-binary-is-binary} in Supp. Mat.).
	A sparse, $\{0,1\}$ strong-R-matrix is necessarily of the form 
	\[\left\{\BA{ll}
	S_{ij} = 1 & \mbox{if $ |i - j| \leq k$,}\\
	S_{ij} = 0 & \mbox{if $ |i - j| > k+1$,}\\
	S_{ij} \in \{0,1\} & \mbox{for $|i - j|= k+1$,}
	\EA\right.\]
	with the integer $k+1$ denoting the bandwidth of $S$.
	Given $S_{\Pi}$ and the corresponding $k$, the distance between $\PAP$ and $S_{\Pi}$ appearing in \ref{eqn:robustseriation1} is separable (whether we use the $l_1$ or Frobenius norm, since $A \in \{0,1\}^{n \times n}$) and can be grouped into three terms, according to whether $(i,j)$ is such that $|i-j| > k+1$, $|i-j| \leq k$ or $|i-j| = k+1$.
	The first term, $n_{\text{out}}(k) \geq 0$, equals the number of non-zero elements of $\PAP$ such that $|i-j| > k+1$.
	The second, $n_{\text{in}}(k) \geq 0$, equals the number of zero elements of $\PAP$ such that $|i-j| \leq k$.
	The third equals zero, because setting the (k+1)-th diagonal of $S$ identical to the (k+1)-th diagonal of $\PAP$ does not violate the R property of $S_{\Pi}$, and $S_{\Pi}$ is by definition the strong-R-matrix that minimizes the distance to $\PAP$.
	For any $\Pi$, if $k > \delta$, the number of non-zeros elements inside the band of width $k$ being bounded by the number of non-zero elements of $A$, we have $n_{\text{in}}(k) \geq 2 \left(n - \delta - 1\right)- s \geq (n - \delta - 1) \geq s$.
	Similarly, for $k \leq \delta$, $n_{\text{out}}(k) \geq s$.
	For $\Pi = \idm$, as long as $k \leq \delta$, $n_{\text{out}}(k)\leq s$ decreases with $k$ and $n_{\text{in}}(k)=0$. For $k = \delta$, $n_{\text{in}}(k)=0$ and $n_{\text{out}}(k) \leq s$ (it is equal to $s$ minus the number of elements in the $\delta+1$-th diagonal).
	Thus, $\Pi = \idm$ is optimal, and $k = \delta$.
\end{proof}

Note that in practice, one has to chose the parameter $\lambda$ without observing $\delta$ before trying to solve \ref{eqn:robust2SUM2}. Yet, for matrices $A$ satisfying the hypothesis of \ref{prop:RSvsR2SUM}, the number of non-zero values of $A$ (which is observed even when $A$ is permuted) provides a way to estimate $\delta$. We compute it as the smallest integer $\delta$ such that the number of non-zero elements in a band matrix of size $\delta$ is larger than nnz$(A)$.
Also remark that the proof of \ref{prop:RSvsR2SUM} is conservative: it only involves reasoning about the location of non-zero values of a vectorized version of $\PAP$. Permuting rows and columns of a matrix adds constraints on the locations of these non-zero values that we did not take into account.

\section{Robust Seriation Algorithms}\label{sec:rob-ser-algo}
We compare several methods to address the \ref{eqn:robust2SUM2} problem. We describe them in what follows and provide experimental results in Section~\ref{sec:experiments}.

\subsection{QAP solvers (FAQ and PHCD)}
The first strategy is to directly minimize the objective of \ref{eqn:robust2SUM2} using QAP solvers. Indeed, the problem matches \ref{eqn:QAP} with $B_{ij} = \min(\lambda, |i-j|^2)$.
We test the aforementioned \citet{vogelstein2011fast} and \citet{lim2016box} methods for solving the QAP.

The first, which we refer to as FAQ \citep{vogelstein2011fast}, uses the matrix representation of permutations with a relaxation in the convex hull of permutation matrices, $\cB$, where the \ref{eqn:QAP} objective is optimized with the conditional gradient (a.k.a. Frank-Wolfe) algorithm. Each step of Frank-Wolfe involves an assignment problem solved with a Hungarian algorithm \citep{kuhn1955hungarian}.

The latter, which we refer to as PHCD \citep{lim2016box}, uses the sorting-network based representation of permutation vectors of \citet{Goem09} and performs coordinate descent in the convex hull of permutation vectors $\cPH$.

For completeness, we also used these QAP solvers in the experiments to solve \ref{eqn:2sum} (i.e. \ref{eqn:QAP} with $B_{ij}=|i-j|^2$), and \ref{eqn:huberSUM} ($B_{ij}= h_{\delta}(|i-j|)$), which is described below.

\subsection{Huber Loss Relaxation of \ref{eqn:robust2SUM2}}
The objective of \ref{eqn:robust2SUM2} is not convex. In order to use convex optimization algorithms, it can be relaxed to its convex envelope, resulting in the following problem,
\begin{align}\tag{HuberSUM($\delta$)} \label{eqn:huberSUM}
\BA{ll}
\mbox{minimize} &  \sum_{i,j=1}^n 
A_{ij} h_{\delta}( |\pi_i - \pi_j|)\\
\st & \pi \in \cP.
\EA
\end{align}
where $h_{\delta}(x)$ is the Huber function, which equals $x^2$ when $|x| \leq \delta$, and $\delta (2 |x| - \delta)$ otherwise.

\subsection{Relaxations in $\cPH$}
A typical convex relaxation work-flow involves relaxing both the objective function to its convex envelope, and relaxing the constrained set to its convex hull, in order to use of the arsenal of convex optimization, including scalable first order methods.
Here, we seek to optimize the objective functions of \ref{eqn:2sum} and \ref{eqn:huberSUM}, $f_{\text{2SUM}}$ and $f_{\text{Huber}}$, on the convex hull of $\cP_n$, the polyhedron $\cPH_n$.

\subsubsection{Symmetry Issues}\label{sssec:sym-issue}
Unfortunately, the solution of a relaxation $\tilde{x} \in \cPH_n$ does not necessarily (and most of the time, not) lie in $\cP_n$. To retrieve a solution in $\cP_n$, one must project the relaxed solution $\tilde{x}$ onto the set of permutations $\cP_n$, which may be challenging.
Here, the flat vector $c_n \triangleq \frac{n+1}{2} \ones_n \in \cPH_n$ minimizes $f_{\text{2SUM}}$ and $f_{\text{Huber}}$
% (the objective functions of \ref{eqn:2sum} and \ref{eqn:huberSUM}, respectively)
 in $\cPH_n$. Indeed, all its entries being equal, 
$f_{\text{2SUM}}(c_n) = f_{\text{Huber}}(c_n) = 0$, which is optimal since these sums involve only non-negative terms.
Yet, this optimum is non-informative. Any permutation $\pi \in \cP_n$ has the same distance to $c_n$, $d = \sum_{i=1}^{n} (\frac{n+1}{2} - i)^2$, thus projecting back $c_n$ to $\cP_n$ is completely degenerate. 

This is illustrated in Figure~\ref{fig:goodVsBadTieBreakPH3}, where $\cPH_3$ is a salmon-colored hexagone centered around~$c_3$ (red circled dot), and whose vertices are the permutations.
$\cPH_3$ is represented on a planar figure since $\cPH_n$ lies in a hyperplane of dimension $n-1$, $ \cH_n = \{ x \in \reals^n | x^T \ones = \frac{n (n+1)}{2} \}$. Indeed, all permutation vectors have the same set of elements, hence the same sum, and also the same norm, as one can see from the black dashed circle of fixed norm in Figure~\ref{fig:goodVsBadTieBreakPH3} on which all permutations lie.
The symmetry of center $c_n$, formally defined by $T(x) - c_n = - (x - c_n)$, is visible from the level lines of $f_{\text{2SUM}}$ (blue ellipses). The objectives from \ref{eqn:2sum}%, \ref{eqn:robust2SUM2} 
and \ref{eqn:huberSUM} are invariant under the ``flipping'' operator $T$. For instance, the permutation $\pi=(1,3,2)^T$ and its symmetric $T \pi = (n+1) \ones - \pi = (3,1,2)$ are on the same level line. This is the fundamental reason why the minimum of \ref{eqn:2sum} and \ref{eqn:huberSUM} lies in the center, making the basic convex relaxation in $\cPH_n$ useless.

\begin{figure}[htb]
	%\vskip 0.2in
	\begin{center}
		\centerline{\includegraphics[width=0.5\columnwidth]{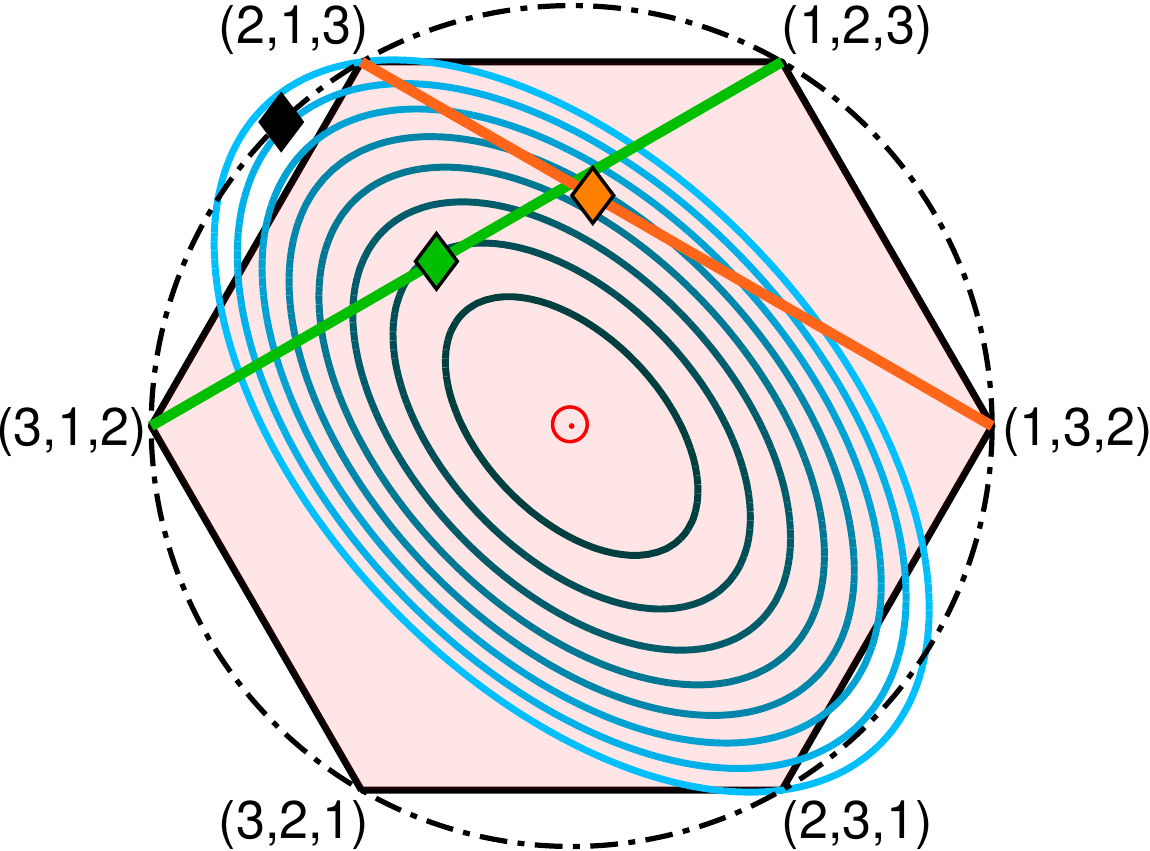}}
		\caption{
			View of the 3-Permutahedron $\cPH_3$ (filled polygon) in the 2D plane $\cH_3$ (orthogonal to the vector $\ones_3$ represented by the red pointing arrow (circled dot)).
			The blue ellipses are the level curves of $f_{\text{2SUM}}$.
			The black dashed circle represents the set of points having the same norm as the permutation vectors, and the black diamond is the minimizer of \ref{eqn:2sum} among them.
			The green (resp. orange) line is where the ``good'' (resp. ``bad'') tie-breaking constraint $\pi_2 +1 \leq \pi_3$ (resp. $\pi_1 +1 \leq \pi_3$) is active, and the green (resp. orange) diamond is the minimizer of $f_{\text{2SUM}}$ on the corresponding constrained set, the triangle $\left((2,1,3), (1,2,3), (1,3,2)\right)$ [resp. $\left((3,1,2), (2,1,3), (1,2,3)\right)$].
			The closest permutation to the green diamond is $(2,1,3)$, which is the correct solution (minimizer of $f_{\text{2SUM}}$ on $\cP_3$), but the orange diamond is closer to $(1,2,3)$ because of the anisotropy induced by the tie-breaking constraint.
			Figure adapted from \citet{lim2014beyond}.
		}
		\label{fig:goodVsBadTieBreakPH3}
	\end{center}
	\vskip -0.2in
\end{figure}

To overcome this issue, \citet{Fogel,lim2014beyond} employ two strategies. One is to add a penalty in the objective that increases towards to the center $c$, \eg,  add the concave penalty $- \mu \| x -c \|^2$ to the objective. The other one is to add constraints that keep the center $c$ out of the feasible set, \eg, add the tie-breaking constraint $\pi_1 + 1 \leq \pi_n$.
This resolves the ambiguity about the direction of the ordering without removing any permutation from the search space (up to a flip), since, for any permutation $\pi \in \cP$, either $\pi$ satisfies the constraint, or its symmetric $T(\pi)$ does.
On Figure~\ref{fig:goodVsBadTieBreakPH3}, that tie-breaking constraint is active on the orange line, and the constrained set satisfying it is the top-right triangle of $\cPH_3$, $\left((2,1,3), (1,2,3), (1,3,2)\right)$.
We consider methods employing both strategies in what follows.

\subsubsection{Frank-Wolfe with tie-breaking constraint (FWTB)}

The conditional gradient (Frank-Wolfe) algorithm in $\cPH$ solves a linear problem at each step, $s_t \in \argmin_{v \in \cP} \{ g_{t}^{T} v\}$, where $g_{t}$ is the gradient at the current point, and then updates the iterate $x^{(t+1)}\gets~ \gamma~x^{(t)}+(1-\gamma)s_t$, thus keeping a (sparse) convex combination of points of $\cP$ as a solution. Here, solving the linear problem boils down to sorting the entries of $g_{t}$ (see \S~\ref{sssec:FWTB-LMO}).
Adding the tie-breaking constraint $\pi_1 + 1 \leq \pi_n$ slightly modifies the linear minimization without affecting the $O(n \log n)$ algorithmic complexity \citep{lim2014beyond}.
We implemented the Frank-Wolfe algorithm with tie-break, using away-steps as in \citet{lacoste2015global}.
Interestingly, this method performed poorly in the experiments. We observed that the tie-break induces a bias in the problem, and that the choice of the tie-break $\pi_i + 1 \leq \pi_j, \: 1\leq i \neq j \leq n $ plays a key role in the performances, as we illustrate in Figure~\ref{fig:goodVsBadTieBreakPH3} with the ``good'' (green) and ``bad'' (orange) tie-breaks.
In Supplementary \S\ref{ssec:FWTB-biased}, we provide details about this bias issue,
and propose a linear minimization oracle for any tie-break of the form $\pi_i + 1 \leq \pi_j$ adapted from the idea of \citet{lim2014beyond} in \S\ref{sssec:FWTB-LMO}.
\subsubsection{Graduated Non-Convexity : Frank-Wolfe Algorithm with Concave Penalty (GnCR and HGnCR)}
In \citet{Fogel,lim2014beyond}, the parameter $\mu$ controlling the amplitude of the penalty $-\mu~\|x-c\|^2$ is limited in order to keep the objective convex.
Precisely, $f_{\text{2SUM}} = x^T L_A x$ becomes $\tilde{f}(x) = x^T L_A x - \mu \| P x \|^2 = x^T ( L_A - \mu P) x $, where $L_A =\diag(A\ones)-A$ is the Laplacian of A and $P = \idm - \frac{1}{n} \ones \ones^T$ projects on the subspace orthogonal to $\ones$.
To keep the problem convex, $\mu$ needs to be smaller than $\lambda_2$, the smallest non-zero eigenvalue of $L_A$.
Still, for small values of $\lambda_2$, this may lead to solutions lying close to the center $c$ up to numerical precision. Also, for $f_{\text{Huber}}$ , the convexity is broken for any positive value of $\mu$.

\citet{evangelopoulos2017graduated} proposed a graduated non-convexity scheme called GnCR to solve \ref{eqn:2sum}, where $\mu$ is gradually increased in outer iterations of the problem, starting with a small value ($\mu \leq \lambda_2$) preserving convexity, and moving towards high values of ($\mu \geq \lambda_{\max}$)
%$\mu$ (larger than the largest eigenvalue of the $L_A$)
, making the objective concave. This strategy aims at finding a sequence of solutions to the subproblems that follow a path from near $c_n$ (when the objective is convex) towards a permutation (when it is concave). To solve each subproblem, GnCR uses the Frank-Wolfe algorithm in $\cPH_n$ without tie-breaking constraint.
In \citet{evangelopoulos2017unpublished}, the approach is extended to a pseudo-Huber loss, thus approximately solving \ref{eqn:huberSUM}, with a method called HGnCR.
We include both methods in the experiments.

\subsubsection{Unconstrained Optimization in $\cH_n$ with Iterative Bias (UBI)}

We propose a method where we also add a penalty to $f_{\text{Huber}}$ in order to avoid the center $c$ : $\tilde{f}_{\text{Huber}}(x) = f_{\text{Huber}}(x) - \mu h(x)$, where $h$ is a penalty function pushing away from $c$, but perform \emph{unconstrained} optimization of $\tilde{f}_{\text{Huber}}$, \ie, we no longer restrict the search space to $\cPH_n$.
Still, if the penalty becomes negligible compared to $f_{\text{Huber}}(x)$ when $\|x-c\|$ gets large, the global solution will be bounded, and, up to a scaling of $\mu$, it will lie in $\cPH_n$.

We use a sigmoidal penalty, $h_{\lambda, w}(x) = 1/\left(1 + \exp\left(-\lambda (x-c)^T (w-c)\right)\right)$.
It breaks the symmetry by adding a bias in a given direction $w$.
We thus propose an iterative method where each outer iteration $t$ solves a subproblem biased towards a direction $w_{t}=x^*_{(t-1)}$ given by the previous iteration, using an unconstrained optimization descent method (LBFGS, implementation from \citet{schmidt2005minfunc}).

The method is summarized in the Supp. Mat., Algorithm~\ref{alg:Uncons}. We also provide technical details in \S~\ref{sssec:Uncons}.
Importantly, we do not perform unconstrained optimization in $\reals^n$ but in $\cH_n$. However, the hyperplane $\cH_n$ can be expressed as an affine transformation of $\reals^{n-1}$, thus we still use unconstrained optimization techniques (in $\reals^{n-1}$) to optimize $\tilde{f}_{\text{Huber}}$ in $\cH_n$ (see \S\ref{sssec:Hn-vs-Rn-1}).

\subsection{Relaxations on the Sphere}
The spectral algorithm minimizes $f_{\text{2SUM}}$ on a sphere of given norm by computing a second extremal eigenvector, thus resolving the center issue. It is summarized in Algorithm~\ref{alg:spectral} and the derivation is detailed in Supp. \S~\ref{sssec:spectral-ordering}.
\begin{algorithm}[ht]
	\caption{Spectral ordering}\label{alg:spectral}
	\begin{algorithmic} [1]
		\REQUIRE Connected similarity matrix $A \in \mathbb{R}^{n \times n}$
		\STATE Compute Laplacian $L_A=\diag(A\ones)-A$
		\STATE Compute second smallest eigenvector of $L_A$, $\mathbf{x^*}$
		\STATE Sort the values of $\mathbf{x^*}$
		\ENSURE Permutation $\pi : \mathbf{x^*}_{\pi(1)} \leq \mathbf{x^*}_{\pi(2)} \leq ... \leq \mathbf{x^*}_{\pi(n)}$
	\end{algorithmic}
\end{algorithm}
Up to a translation and dilatation of this sphere, it is represented by the black dashed circle in Figure~\ref{fig:goodVsBadTieBreakPH3}.
However, this was only possible because the \ref{eqn:2sum} objective takes the form of a quadratic form $x^T L_A x$. Optimizing \ref{eqn:huberSUM} over a sphere is more challenging.
We propose two methods to address this task.

\subsubsection{Spectral Relaxation}
We propose to extend the spectral Algorithm~\ref{alg:spectral} to \ref{eqn:huberSUM} through the variational form of the Huber loss (so-called $\eta$-trick).
The absolute value of a real number can be expressed as $|x| = \argmin_{\eta \geq 0} \frac{x^2}{\eta} + \eta$.
The analog for Huber is, $h_{\delta}(x) = \argmin_{\eta \geq \delta} \frac{x^2}{\eta} + \eta$.

We propose an alternating minimization scheme called $\eta$-Spectral, based on this variational form
and summarized in Algorithm~\ref{alg:etaTrickHuberSUM}. A detailed description is given in Supp. \S\ref{sssec:eta-spectral}.
\begin{algorithm}[ht]
%	\footnotesize
	\caption{$\eta$-Spectral Alternate Minimization Scheme for \ref{eqn:huberSUM}.}
	\label{alg:etaTrickHuberSUM}
	\begin{algorithmic} [1]
		\REQUIRE A similarity matrix $A \in \symm_{n}^+$, a maximum number of iterations $T$.
		\STATE Set $t = 0$, $\eta^{(0)} = \ones_n \ones_{n}^T$.
		\WHILE{$t \leq T$}
		\STATE Compute $\pi^{(t)} \in \argmin_{\pi \in \cP} \left\{ \sum_{i,j=1}^n A_{ij} \left ( \frac{(\pi_i - \pi_j)^2}{\eta_{ij}^{(t)}} + \eta_{ij}^{(t)} \right) \right\}$, \ie, $\pi^{(t)}$ is solution of \eqref{eqn:2sum} for the matrix $A./\eta$ where $./$ denotes the Hadamard (entrywise) division.
		\STATE Compute $\eta^* \in \argmin_{0 \leq \eta \leq \delta} \left\{  \sum_{i,j=1}^n A_{ij} \left ( \frac{(\pi_{i}^{(t)} - \pi_{j}^{(t)})^2}{\eta_{ij}} + \eta_{ij} \right) \right\}$, \ie, $\eta_{ij}^* = h_{\delta}(|\pi_{i}^{(t)} - \pi_{j}^{(t)}|)$
		\STATE Update $\eta^{(t)} \gets \gamma \eta^{(t-1)} + (1 - \gamma) \eta^{*}$.
		\STATE  $t \gets t+1$.
		\ENDWHILE
		\ENSURE A permutation $\pi^{(T)}$.
	\end{algorithmic}
\end{algorithm}

\subsubsection{First Order Optimization on Manifold}
Finally, we used a manifold optimization toolbox \citep{boumal2014manopt} as a black-box, to which we provide the expression of the objective and gradient of \ref{eqn:huberSUM} and ask for the minimum over the sphere (computed with a trust-regions algorithm). Some additional details about the formulation of the sphere are given in Supp. \S\ref{sssec:Hn-vs-Rn-1}.

\section{Seriation with Duplications}
\label{sec:SeriationDuplications}
The reformulation of \emph{de novo} sequencing as a (robust) seriation problem is based on the assumption that, up to noise, the bins can be reordered to form a long chain. While this hypothesis is relevant when a normal genome or chromosome is sequenced with long reads, it clearly fails to hold in an important case: cancer genomes. Indeed cancer cells typically harbour so-called \emph{structural variations} where large portions of the genome, up to whole chromosomes, are duplicated or deleted, and where new chromosomes are formed by fusing two pieces of chromosomes which are not connected in a normal genome. For example, Figure~\ref{fig:cancer} shows the 1D structure of a breast cancer cell line. Different colors correspond to DNA fragments normally in different chromosomes. Instead of 23 pairs of  chromosomes with each pair in a single uniform color, expected in a normal cell, we observe various mosaics of colors indicating various duplication and fusion events.
\begin{figure}[ht]
	%\vskip 0.2in
	\begin{center}
		\centerline{\includegraphics[width=0.75\columnwidth]{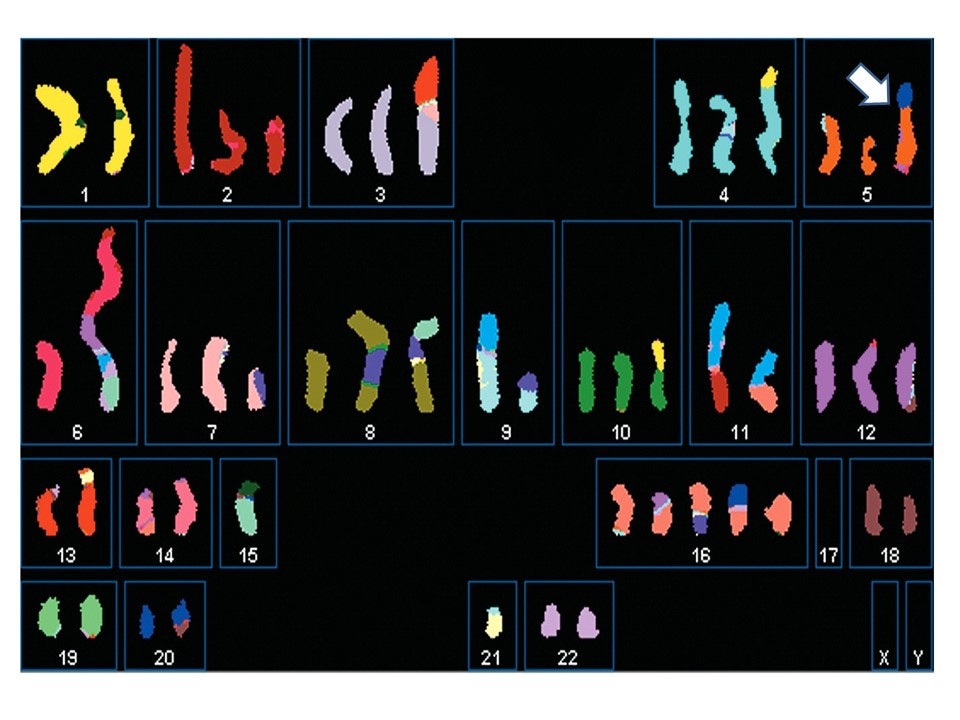}}
		\caption{Structure of a typical cancer genome (breast cancer cell line). Instead of the standard 23 pairs of chromosomes, cancer cells often harbour large structural variants, such as changes in copy number and translocations. Reconstructing this 1D map from high-throughput Hi-C or sequencing data is an important problem that motivates the definition of seriation with duplications. Figure from~\citet{Karp}.}
		\label{fig:cancer}
	\end{center}
	\vskip -0.2in
\end{figure}

Reconstructing the 1D structure of a cancer genome from experimental data is an important problem. Besides standard DNA sequencing techniques, an interesting recent development called Hi-C and based on the chromosome conformation capture (3C) technology allows to measure experimentally the frequency of physical interactions in 3D between all pairs of positions in the genome \citep{Lieberman-Aiden2009Comprehensive}. In short, if we split the full human genome into $n$ bins (of typical length $10^4-10^6$ basepairs each), an Hi-C experiment produces an $n\times n$ interaction matrix $A$ such that $A_{ij}$ is the frequency of interactions between DNA fragments in bins $i$ and $j$. Interestingly, most 3D interactions take place between DNA fragments which are on the same chromosome, and the frequency of 3D interactions tends to decrease with the distance between the fragments when they are on the same chromosome; hence Hi-C data can be used to perform genome assembly, using \eg, a seriation algorithm to obtain the layout \citep{Korbel2013Genome}.

A Hi-C experiment roughly proceeds as follows. Freeze the DNA in its current 3D conformation, and collect pairs of DNA fragments that lie close to each other in this spatial conformation.
For every such pair $(k,l)$, map each of the two fragments to a normal reference genome, providing their positions, $p_k$ and $p_l$. Add +1 to the interaction matrix entry $A_{ij}$ corresponding to the two bins $i$ and $j$ that respectively span $p_k$ and $p_l$. This process is repeated to statistically obtain an average proximity (frequency) between two bins.

Because of duplications, deletions and translocations in cancer genome, each bin (defined according to a normal reference genome) may be included in several fragments of different chromosomes in a cancer genome, and it may therefore not be possible nor relevant to order the bins. Instead, since it is possible to estimate from Hi-C data the total number of DNA copies for each bin, it makes more sense to first associate to each bin a corresponding number of fragments (e.g. two fragments per bin in a normal diploid genome), and then reconstruct an ordering of fragments into a number of chains to estimate the 1D structure of a cancer genome (Figure~\ref{fig:cancer}). 

The difficulty to apply a seriation algorithm is that Hi-C data provide {\em cumulative} information at the bin level, not at the fragment level. More precisely, if we denote $S_{kl}$ the (unobserved) frequency of interactions between fragments $k$ and $l$, respectively extracted from bins $b_i$ and $b_j$, what Hi-C measures as interactions between $b_i$ and $b_j$ is the sum of $S_{k^{\prime}l^{\prime}}$ where $k^{\prime}$ and $l^{\prime}$ are fragments contained in $b_i$ and $b_j$, respectively. This motivates the definition of the seriation with duplication problem formalized below.

\subsection{Problem setting}\label{ssec:formalizationSerDupl}
For clarity, let us begin by an example with $n=3$, $N=4$. Consider a simplified reference genome split in 3 subsequences, $g = (\varheartsuit, \Diamondblack, \clubsuit )$.
In a cancer genome, the $\varheartsuit$ sequence is duplicated and also appears at the end of the genome. Using the symbol $\heartsuit$ to denote the duplicated sequence of DNA, the cancer genome can be written $\tilde{g} = (\varheartsuit, \Diamondblack, \clubsuit, \heartsuit )$. 
The true interaction matrix between the fragments $(\varheartsuit, \Diamondblack, \clubsuit, \heartsuit )$ is a $\cR$ matrix,
\begin{align*}
S_* = 
\begin{blockarray}{ccccc}
&\varheartsuit & \Diamondblack & \clubsuit & \heartsuit \\
\begin{block}{c(cccc)}
\varheartsuit & 3 & 2 & 1 & 0\\ 
\Diamondblack & 2 & 3 & 2 & 1\\ 
\clubsuit & 1 & 2 & 3 & 2\\
\heartsuit & 0 & 1 & 2 & 3\\
\end{block}
\end{blockarray}
\end{align*}
Yet, interactions between $(\clubsuit, \heartsuit)$ and $(\clubsuit, \varheartsuit)$ are both attributed to $(\clubsuit, \varheartsuit)$ by the Hi-C experiment, resulting in the following observed interaction matrix and duplication count vector, 
\begin{align*}
A = 
\begin{blockarray}{cccc}
&\varheartsuit & \Diamondblack & \clubsuit \\
\begin{block}{c(ccc)}
\varheartsuit & 6 & 3 & 3 \\ 
 \Diamondblack & 3 & 3 & 2 \\ 
 \clubsuit & 3 & 2 & 3\\
\end{block}
\end{blockarray}
~,
\quad c = (2, 1, 1)^T.
\end{align*}
Observing $A$, the sequence we wish to reconstruct is in fact $\pi_* = (1, 2, 3, 1)^T$.

Given a matrix $A \in \symm_n$ of similarity between $n$ bins, and a vector $c \in \posints^n$ (the ``counts'' of the bins), with total $N = \sum_{i=1}^{n} c_i$, Seriation with Duplications aims at finding a sequence $\tilde{\pi} \in [1,n]^N$ of $N$ integers such that $i$ appears $c_i$ times in $\tilde{\pi}$, at positions $L_i \subset [1,N]$ with $|L_i|=c_i$, and a matrix $S \in \cR^N$ such that 
\[
A_{ij} = \sum_{k \in L_i , l \in L_j} S_{kl}\quad \mbox{ for all $i, j \in [1,n]$.}
\]
Remark that if $c = \ones_n$ (the vector of $\reals^n$ with all entries equal to $1$), the problem is equivalent to seriation and~$\tilde{\pi}$ is a permutation vector.

To represent the subsets $\{ L_i \}_{i \in [1,n]}$, we use assignment matrices $Z \in \{0,1\}^{n \times N}$ such that $Z_{ik} = 1$ iff $k \in L_i$ (as in clustering problems).
Such an assignment matrix is linked to the vector-based notation $\tilde{\pi} \in [1,n]^N$ from above through $\tilde{\pi} = Z^T (1, 2, \ldots, n)^T$.
We write $\cZ_c$ the set of assignment matrices for a given duplication count vector $c \in \posints^n$,
\begin{equation*}
\cZ_c = \left\{
Z \in \{0,1\}^{n \times N} \: \middle| \: Z \ones_N = c \: , \: Z^T \ones_n = \ones_N
\right\}
\end{equation*}
where $N = c^T \ones_n$, and the constraints indicate that each bin $i \in [1,n]$ has $c_i$ duplicates, and that each element $k \in [1,N]$ comes from one single bin.
Observe that given an initial assignment matrix $Z_0 \in \cZ_c$, any other $Z \in \cZ_c$ can be expressed as $Z_0$ whose columns have been permuted, \ie~there exists $\Pi \in \cP_N$ such that $Z = Z_0 \Pi$.
As in the \ref{eqn:seriation} formulation, the problem of Seriation with Duplications can be written
\begin{align}\tag{SD} \label{eqn:seriationdupli}
\BA{ll}
\find & \Pi \in \cP_N \: , \: S \in \cR^N \\
\st & Z_0 \Pi S \Pi^T Z_0^T = A.
\EA
\end{align}
where $Z_0$ is an initial assignment matrix. Like \ref{eqn:seriation}, \ref{eqn:seriationdupli} may not be feasible. The analog of \ref{eqn:robustseriation1} is then written
\begin{align}\tag{RSD} \label{eqn:robustseriationdupli1}
\BA{ll}
\mbox{minimize} &  \| Z_0 \Pi S \Pi^T Z_0^T - A \| \\
\st & \Pi \in \cP_N, \: S \in \cR^n.
\EA
\end{align}
Note again that if $c = \ones_n$, then $N=n$, $Z_0 = \idm_n$, and \ref{eqn:seriationdupli} (respectively \ref{eqn:robustseriationdupli1}) is equivalent to \ref{eqn:seriation} (resp. \ref{eqn:robustseriation1}).

\subsection{Algorithms}

Let us assume that we are able to project on the set of pre-strong-R matrices, that is to say, given $S$, we can compute the couple $(\Pi_{*}, S_*) \in \cP \times \cR$ that minimizes $ \| \Pi R \Pi^T - S \|$ (note that the projection on the set of pre-strong-R matrices is nothing but the \ref{eqn:robustseriation1} problem).
we can then use alternationg projections to optimize \ref{eqn:robustseriationdupli1}  (although the set of pre-strong-R matrices is not convex, so convergence to a global optimum is not garanteed). We detail this method Algorithm~\ref{alg:AltProjBase}.
\begin{algorithm}[h!]
	\caption{General Alternating Projection Scheme for Seriation with Duplications.}
	\label{alg:AltProjBase}
	\begin{algorithmic} [1]
		\REQUIRE A matrix $A \in \symm_n$, a duplication count vector $c \in \posints^n$, a maximum number of iterations $T$.
		\STATE Set $N= \sum_{i=1}^n c_i, \: Z^{(0)} \in \cZ_c$ and $S^{(0)} = Z^{(0) T} \diag(c^{-1}) A \diag(c^{-1})^T Z^{(0)}$, \ie, $S^{(0)}_{kl} = \frac{A_{ij}}{c_i c_j}$ with $k \in L_i$ and $l \in L_j$.
		\WHILE{$t \leq T$}
		\STATE Compute $(\Pi_*, S_*)$, solution of \eqref{eqn:robustseriation1} for $S^{(t)}$, and set\\
		$S^{(t+\frac{1}{2})} \gets S_*$ \\
		$Z^{(t+1)} \gets Z^{(t)} \Pi_*$ \label{lst:robustseriationinalg}
		\STATE Compute $S_A$, projection of $S^{(t+\frac{1}{2})}$ on the set of matrices that satisfy $Z^{(t+1)}  S  Z^{(t+1) T} = A$, and set\\
		$S^{(t+1)} \gets S_A$
		\STATE  $t \gets t+1$.
		\IF{$Z^{(t+1)} = Z^{(t)}$}
		\STATE {\bf break}
		\ENDIF
		\ENDWHILE
		\ENSURE A matrix $S^{(T)}$, an assignment matrix $Z^{(T)}$
	\end{algorithmic}
\end{algorithm}

In fact, we can use any method presented in Section~\ref{sec:RobustSeriation} to solve the projection step 3 in Algorithm~\ref{alg:AltProjBase}. In our experiments here, we use $\eta$-spectral and Uncons, which are the most efficient, and spectral as a baseline.
%We use the $\eta$-Spectral method to solve step 3.
From the permutation $\Pi_*$ obtained by, \eg, solving \ref{eqn:huberSUM} with $\eta$-Spectral,
we compute $S_*$ by doing a $\ell_1$ projection of  $\Pi_* S^{(t)} \Pi_{*}^T$ onto $\cR$ through linear programming.
Indeed, the membership to $\cR$ can be described by a set of linear inequalities. We can also add upper bounds on the matrix entries belonging to a given diagonal, if we have {\it a priori} knowledge on the law by which the entries decrease when moving away from the diagonal, which is the case for Hi-C genome reconstruction. We detail these steps in the Supplementary Material, \S~\ref{sssec:dupli-detail-proj-on-strongR}.
Projecting onto the set of matrices satisfying linear equality constraints in step 4 can also be done with a convex programming solver, but the problem is actually separable on the values $(i,j) \in [1,n] \times [1,n]$ and has a closed form solution detailed in the Supplementary Material, \S~\ref{sssec:dupli-detail-proj-on-affine}.

\section{Numerical Experiments}\label{sec:experiments}
In this section, we test the algorithms detailed above on both synthetic and real data sets.
\subsection{Robust Seriation} 
\subsubsection{Synthetic data}
We performed experiments with matrices from $\cM_n(\delta, s)$ with $n=100, \:200, \:500$, $\delta=n/10, \: n/20$, and $s/s_{\text{lim}} =0.5,1,2.5,5,7.5, 10$, with $s$ is the number of out-of-band terms as in Definition~\ref{def:bd+noise-mat} and $s_{\text{lim}} = (n - \delta - 1)$ is the value appearing in Proposition~\ref{prop:RSvsR2SUM}, where \ref{eqn:robust2SUM2} and \ref{eqn:robustseriation1} coincide when $s \leq s_{\text{lim}}$.
In Table~\ref{tb:rob-ser-synth}, we show the seriation results of the different methods described in \S\ref{sec:rob-ser-algo}. When an algorithm can be used for \ref{eqn:2sum}, but also with \ref{eqn:robust2SUM2} (or \ref{eqn:huberSUM}, respectively), we pre-pend -H (or -R, resp.) to its name in the Huber (or R-2SUM, resp.) corresponding row of the Table . In Table~\ref{tb:KT-vs-s}, we show the Kendall-$\tau$ score for different values of $s/s_{\text{lim}}$.
For a given set of parameters $(n, \delta, s)$, we generated 100 experiments with random locations for the out-of-band entries. The results displayed in Tables~\ref{tb:rob-ser-synth} and \ref{tb:KT-vs-s} are averaged over these experiments, with the standard deviation given after the $\pm$ sign.
The experiments with different values of $n$ and $\delta$ exhibit similar trends, as one can see in Tables~\ref{tb:KT-vs-s-delta01n} and \ref{tb:KT-vs-s-delta005n}.
Overall, $\eta$-Spect. finds the best ordering, and is also time efficient. Uncons is also competitive.
Some methods such as HGnCR do not perform as good in average, but have a higher standard deviation over the 100 simulations. They actually perform well on most simulations, but fail on a few ones. Overall this results in a lower mean Kendall-$\tau$ score and a higher standard deviation. 

\begin{table}[t]
	\caption{Kendall-$\tau$, \ref{eqn:huberSUM}, \ref{eqn:robust2SUM2}, \ref{eqn:robustseriation1} (with Froebenius norm) scores for the different methods for $n=200$, $\delta=20$, and $s/s_{\text{lim}}=5$. The results are averaged over 100 instances of $A \in \cM_n(\delta,s)$.
		The first seven methods are used with the \ref{eqn:2sum} loss, the six middle ones with the \ref{eqn:huberSUM} loss, where $\delta$ was chosen following the rule described at the end of~\S\ref{ssec:robust2sum}, and the two last middle ones with the \ref{eqn:robust2SUM2} loss. and  Some scores are scaled to simplify the table.}
	\label{tb:rob-ser-synth}
	\begin{center}
		\begin{small}
			\begin{sc}
				\begin{tabular}{ccccccr}

					\toprule
					& \parbox{1.5cm}{Kendall-$\tau$} & \parbox{1.3cm}{Huber {$\scriptstyle \times1e-6$}} & \parbox{1.3cm}{R2SUM {$\scriptstyle \times 1e-6$}} & Dist2R & \parbox{1.3cm}{2SUM {$\scriptstyle \times 1e-6$}} & Time (s) \\ 
					\midrule 
					spectral & 0.86 {$\scriptstyle \pm 0.06$} & 7.76 {$\scriptstyle \pm 0.61$} & 2.67 {$\scriptstyle \pm 0.19$} & 73.6 {$\scriptstyle \pm 5.3$} & 7.7 {$\scriptstyle \pm 0.4$} & 3.54e-01  \\ 
					GnCR & 0.87 {$\scriptstyle \pm 0.15$} & 7.21 {$\scriptstyle \pm 0.40$} & 2.47 {$\scriptstyle \pm 0.17$} & 67.6 {$\scriptstyle \pm 4.7$} & 7.5 {$\scriptstyle \pm 0.3$} & 6.99e-01  \\ 
					FAQ & 0.89 {$\scriptstyle \pm 0.08$} & 7.19 {$\scriptstyle \pm 0.31$} & 2.46 {$\scriptstyle \pm 0.14$} & 67.6 {$\scriptstyle \pm 4.1$} & {\bf 7.4 {$\scriptstyle \pm 0.2$}} & 3.37e+00  \\ 
					LWCD & 0.89 {$\scriptstyle \pm 0.08$} & 7.18 {$\scriptstyle \pm 0.30$} & 2.46 {$\scriptstyle \pm 0.14$} & 67.5 {$\scriptstyle \pm 3.9$} & {\bf7.4 {$\scriptstyle \pm 0.2$} } & 2.99e+00  \\ 
					UBI & 0.89 {$\scriptstyle \pm 0.06$} & 7.32 {$\scriptstyle \pm 0.31$} & 2.52 {$\scriptstyle \pm 0.12$} & 69.5 {$\scriptstyle \pm 3.3$} & 7.5 {$\scriptstyle \pm 0.2$} & 1.45e+00  \\ 
					Manopt & 0.86 {$\scriptstyle \pm 0.06$} & 7.72 {$\scriptstyle \pm 0.58$} & 2.66 {$\scriptstyle \pm 0.18$} & 73.2 {$\scriptstyle \pm 5.2$} & 7.6 {$\scriptstyle \pm 0.4$} & 3.90e+00  \\ 
					\midrule 
					$\eta$-Spectral & {\bf 0.97 {$\scriptstyle \pm 0.00$} } & {\bf 6.74 {$\scriptstyle \pm 0.13$} } & 2.03 {$\scriptstyle \pm 0.02$} & 50.8 {$\scriptstyle \pm 0.8$} & 7.6 {$\scriptstyle \pm 0.2$} & 1.07e+00  \\ 
					HGnCR & 0.89 {$\scriptstyle \pm 0.22$} & 6.91 {$\scriptstyle \pm 0.52$} & 2.11 {$\scriptstyle \pm 0.26$} & 53.6 {$\scriptstyle \pm 8.6$} & 7.7 {$\scriptstyle \pm 0.4$} & 9.06e+00  \\ 
					H-FAQ & 0.95 {$\scriptstyle \pm 0.08$} & 6.84 {$\scriptstyle \pm 0.32$} & 2.01 {$\scriptstyle \pm 0.08$} & 49.0 {$\scriptstyle \pm 3.9$} & 7.7 {$\scriptstyle \pm 0.3$} & 4.28e-01  \\ 
					H-LWCD & 0.94 {$\scriptstyle \pm 0.09$} & 6.88 {$\scriptstyle \pm 0.34$} & 2.03 {$\scriptstyle \pm 0.11$} & 49.7 {$\scriptstyle \pm 5.0$} & 7.7 {$\scriptstyle \pm 0.3$} & 3.00e+00  \\ 
					H-UBI & {\bf 0.97 {$\scriptstyle \pm 0.00$} } & {\bf 6.74 {$\scriptstyle \pm 0.13$} } & 2.05 {$\scriptstyle \pm 0.02$} & 51.4 {$\scriptstyle \pm 1.1$} & 7.6 {$\scriptstyle \pm 0.2$} & 3.08e+00  \\ 
					H-Manopt & 0.92 {$\scriptstyle \pm 0.06$} & 7.05 {$\scriptstyle \pm 0.39$} & 2.26 {$\scriptstyle \pm 0.15$} & 59.7 {$\scriptstyle \pm 5.2$} & 7.6 {$\scriptstyle \pm 0.3$} & 9.22e+00  \\ 
					\midrule 
					R-FAQ & 0.95 {$\scriptstyle \pm 0.10$} & 6.97 {$\scriptstyle \pm 0.40$} & {\bf 1.99 {$\scriptstyle \pm 0.08$} } & {\bf 44.9 {$\scriptstyle \pm 4.3$} } & 7.9 {$\scriptstyle \pm 0.4$} & 3.39e-01  \\ 
					R-LWCD & 0.94 {$\scriptstyle \pm 0.09$} & 7.03 {$\scriptstyle \pm 0.42$} & 2.01 {$\scriptstyle \pm 0.09$} & 46.0 {$\scriptstyle \pm 4.8$} & 8.0 {$\scriptstyle \pm 0.4$} & 3.32e+00  \\ 
					
					\bottomrule
				\end{tabular}
			\end{sc}
		\end{small}
	\end{center}
	\vskip -0.1in
\end{table}

\begin{table}[t]
	\caption{Kendall-$\tau$ score for different values of $s/s_{\text{lim}}$, for the same methods as in Table~\ref{tb:rob-ser-synth}, and $n=200$, $\delta=20$.}
	\label{tb:KT-vs-s}
	\vskip .15in
	\begin{center}
		\begin{small}
			\begin{sc}
				\begin{tabular}{lcccccr}

					\toprule
					& $s/s_{\text{lim}}=0.5$ & $s/s_{\text{lim}}=1$ & $s/s_{\text{lim}}=2.5$ & $s/s_{\text{lim}}=5$ & $s/s_{\text{lim}}=7.5$ & $s/s_{\text{lim}}=10$ \\ 
					\midrule 
					spectral  & 0.96 {$\scriptstyle \pm 0.01$} & 0.95 {$\scriptstyle \pm 0.01$} & 0.91 {$\scriptstyle \pm 0.03$} & 0.86 {$\scriptstyle \pm 0.06$} & 0.84 {$\scriptstyle \pm 0.06$} & 0.80 {$\scriptstyle \pm 0.09$}\\
					GnCR  & 0.98 {$\scriptstyle \pm 0.00$} & 0.96 {$\scriptstyle \pm 0.04$} & 0.93 {$\scriptstyle \pm 0.07$} & 0.87 {$\scriptstyle \pm 0.15$} & 0.81 {$\scriptstyle \pm 0.20$} & 0.80 {$\scriptstyle \pm 0.18$}\\
					FAQ  & 0.98 {$\scriptstyle \pm 0.00$} & 0.97 {$\scriptstyle \pm 0.00$} & 0.94 {$\scriptstyle \pm 0.02$} & 0.89 {$\scriptstyle \pm 0.08$} & 0.87 {$\scriptstyle \pm 0.08$} & 0.82 {$\scriptstyle \pm 0.13$}\\
					LWCD  & 0.98 {$\scriptstyle \pm 0.00$} & 0.97 {$\scriptstyle \pm 0.00$} & 0.94 {$\scriptstyle \pm 0.02$} & 0.89 {$\scriptstyle \pm 0.08$} & 0.87 {$\scriptstyle \pm 0.08$} & 0.82 {$\scriptstyle \pm 0.13$}\\
					UBI  & 0.97 {$\scriptstyle \pm 0.00$} & 0.96 {$\scriptstyle \pm 0.01$} & 0.92 {$\scriptstyle \pm 0.03$} & 0.89 {$\scriptstyle \pm 0.06$} & 0.86 {$\scriptstyle \pm 0.07$} & 0.82 {$\scriptstyle \pm 0.12$}\\
					Manopt  & 0.97 {$\scriptstyle \pm 0.00$} & 0.95 {$\scriptstyle \pm 0.01$} & 0.91 {$\scriptstyle \pm 0.03$} & 0.86 {$\scriptstyle \pm 0.06$} & 0.84 {$\scriptstyle \pm 0.06$} & 0.80 {$\scriptstyle \pm 0.09$}\\
					\midrule 
					$\eta$-Spectral  & 0.99 {$\scriptstyle \pm 0.00$} & 0.99 {$\scriptstyle \pm 0.00$} & 0.98 {$\scriptstyle \pm 0.00$} & {\bf 0.97 {$\scriptstyle \pm 0.00$}} & {\bf 0.96 {$\scriptstyle \pm 0.00$}} & {\bf 0.94 {$\scriptstyle \pm 0.06$}}\\
					HGnCR  & {\bf 1.00 {$\scriptstyle \pm 0.00$}} & 0.99 {$\scriptstyle \pm 0.00$} & {\bf 0.99 {$\scriptstyle \pm 0.00$}} & 0.89 {$\scriptstyle \pm 0.22$} & 0.85 {$\scriptstyle \pm 0.23$} & 0.83 {$\scriptstyle \pm 0.25$}\\
					H-FAQ  & {\bf 1.00 {$\scriptstyle \pm 0.00$}} & {\bf1.00 {$\scriptstyle \pm 0.00$} } & 0.99 {$\scriptstyle \pm 0.01$} & 0.95 {$\scriptstyle \pm 0.08$} & 0.94 {$\scriptstyle \pm 0.09$} & 0.91 {$\scriptstyle \pm 0.13$}\\
					H-LWCD  & {\bf 1.00 {$\scriptstyle \pm 0.00$} } & {\bf 1.00 {$\scriptstyle \pm 0.00$}} & 0.99 {$\scriptstyle \pm 0.02$} & 0.94 {$\scriptstyle \pm 0.09$} & 0.94 {$\scriptstyle \pm 0.09$} & 0.90 {$\scriptstyle \pm 0.14$}\\
					H-UBI  & 0.99 {$\scriptstyle \pm 0.00$} & 0.99 {$\scriptstyle \pm 0.00$} & 0.98 {$\scriptstyle \pm 0.00$} & {\bf 0.97 {$\scriptstyle \pm 0.00$}} & {\bf 0.96 {$\scriptstyle \pm 0.01$}} & {\bf 0.94 {$\scriptstyle \pm 0.03$}}\\
					H-Manopt  & {\bf 1.00 {$\scriptstyle \pm 0.00$}} & 0.99 {$\scriptstyle \pm 0.00$} & 0.97 {$\scriptstyle \pm 0.02$} & 0.92 {$\scriptstyle \pm 0.06$} & 0.89 {$\scriptstyle \pm 0.07$} & 0.84 {$\scriptstyle \pm 0.10$}\\
					\midrule 
					R-FAQ  & {\bf 1.00 {$\scriptstyle \pm 0.00$} } & {\bf 1.00 {$\scriptstyle \pm 0.00$}} & 0.99 {$\scriptstyle \pm 0.04$} & 0.95 {$\scriptstyle \pm 0.10$} & 0.94 {$\scriptstyle \pm 0.10$} & 0.90 {$\scriptstyle \pm 0.15$}\\
					R-LWCD  & 0.99 {$\scriptstyle \pm 0.00$} & {\bf 1.00 {$\scriptstyle \pm 0.00$} } & 0.99 {$\scriptstyle \pm 0.04$} & 0.94 {$\scriptstyle \pm 0.09$} & 0.94 {$\scriptstyle \pm 0.10$} & 0.90 {$\scriptstyle \pm 0.16$}\\
					
					\bottomrule
				\end{tabular}
			\end{sc}
		\end{small}
	\end{center}
	\vskip -.1in
\end{table}

\subsubsection{{\it E. coli} genome reconstruction}\label{sssec:ecoli}
We performed experiments with real data from {\it Escherichia coli} reads sequenced by \citet{Loman15} with the Oxford Nanopore Technology MinION's device. We used the minimap \citep{Li:Miniasm} tool that measures the overlap between reads to build the similarity matrix.
The resulting matrix is sparse and of size $n \simeq 10^4$. Removing its smallest values with a threshold can help in the task of reordering it \citep{recanati2016spectral}. We performed a grid search with 24 threshold values linearly spaced in a reasonable range (500-700) that kept the whole matrix connected. For each of them, we compute $\sqrt{\lambda}=\delta$ from the number of non-zero entries of the matrix as explained in~\ref{ssec:robust2sum}. We only show the results yielding the best \ref{eqn:robust2SUM2} score in Figure~\ref{fig:EcoliOrdVsTrue}.

We only tested the $\eta$-Spectral and UnCons methods since they exhibit the best performance on synthetic data and are scalable (one boils down to computing the highest eigenvalues of a sparse matrix, the other follows a first order unconstrained optimization scheme).
The UnCons method failed to find an approximately correct ordering.
The $\eta$-Spectral method outperformed UnCons in the \ref{eqn:robust2SUM2} score sense, and successfully reordered the genome. Nevertheless, it is quite sensitive to variations in the similarity matrix. Among the 24 threshold parameters used in the grid-search, only a few (the ones with lowest \ref{eqn:robust2SUM2} score) yielded a correct ordering.

Figure~\ref{fig:EcoliOrdVsTrue} displays the ordering found by our method versus one given by mapping the reads to a reference genome with the BWA sequence alignment tool \citep{LiBWA}.
Some reads could not be aligned to the reference genome with BWA, probably due to high sequencing errors. For these reads, the software outputs an ``infinite'' position. This results in the cloud of points located at the top of the figure.
Observe also a small zone where the ordering is reversed, which is zoomed-in in Figure~\ref{subfig:EcoliWrongContig}.

\begin{figure}[ht]
	%\vskip 0.2in
	\begin{center}
		\centerline{\includegraphics[width=0.5\columnwidth]{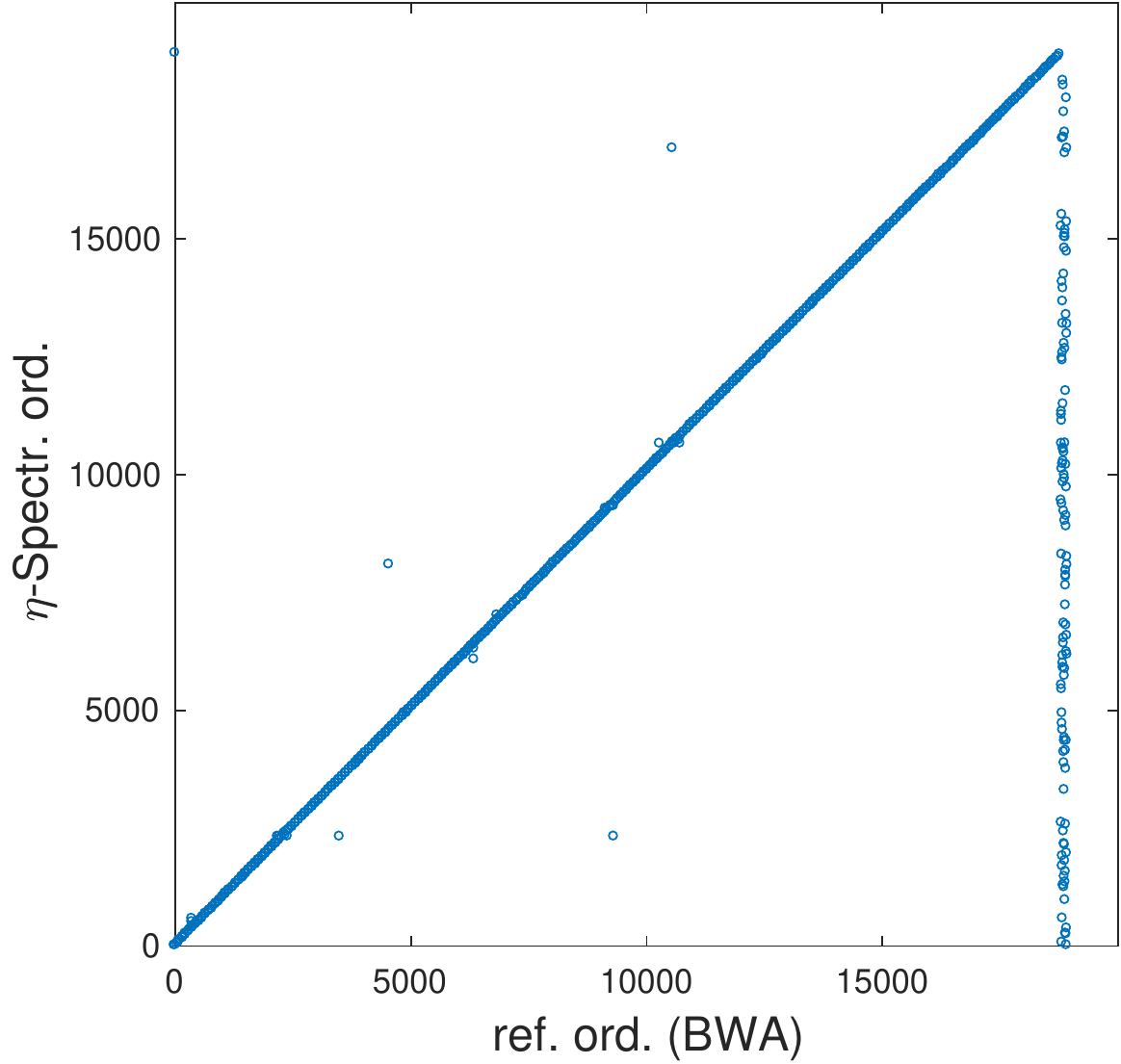}}
		\caption{Ordering found by $\eta$-Spectral \textit{vs} true ordering (obtained by mapping the \textit{reads} to a reference genome with BWA.}
		\label{fig:EcoliOrdVsTrue}
	\end{center}
	\vskip -0.2in
\end{figure}

\subsection{Seriation with Duplication}\label{ssec:exps-ser-dupli}
We performed synthetic experiments in which we generate the data as follows. We first build a strong-R matrix $S$ of size $N$, and a random duplication count vector $c \in \posints^n$ such that $N = \sum_{i=1}^{n} c_i$. We generate a random assignment matrix $Z \in \cZ_c$, and the corresponding observed matrix $A = Z S Z^T$.
We then test Algorithm~\ref{alg:AltProjBase} by providing it with $A$ and $c$ and comparing its output $Z^{{\text out}}$ and $S^{{\text out}}$ to the ground truth.

Specifically, we compute the relative Froebenius distance between $S$ and $S^{{\text out}}$, ${\text d2R} = \|S - S^{{\text out}}\|_{F} / \|S\|_F$, and we compute a distance between the assignment matrices as follows.
For a given bin index $i \in [1,n]$ (\ie~a row $Z_i$), there are $c_i$ locations for the non-zeros of the $i$-th row of $Z$ and of $Z^{{\text out}}$ (which can also be viewed as two subsets $L_i$ and $L_i^{{\text out}}$ of $[1,N]$). To compute the distance between these positions, we first compute a matching between the elements of $L_i$ and $L_i^{{\text out}}$ using the Hungarian algorithm \citep{kuhn1955hungarian}.
Then, we compute the distance between each matched pair of elements $(k, k^{{\text out}}) \in L_i \times L_i^{{\text out}}$, and store the average distance between matching pairs for row $i$. Supplementary Figures~\ref{fig:matchingHungarian} and~\ref{fig:ZvsZtrue} illustrates this process. The average over all rows of this average distance is given in Table~\ref{tb:SerDupliSparseBD40} as meanDist, and we also provide its standard deviation and median.

In the experiments, we built dense strong-R, Toeplitz matrices $S$ where the entries follow a power law of the distance to the diagonal, $S_{kl} = |k-l|^{-\gamma}$, which is consistent with the observed frequency of intra-chromosomal interactions \citep{Lieberman2009Comprehensive}. We used $N=200$ and tried several values for the exponent $\gamma$ and the ratio $N/n$, namely $\gamma \in \{ 0.1, \: 0.5, \:1\}$ and $N/n \in \{1.33, \: 2, \: 4 \}$. The results are shown in Tables~\ref{tb:SerDupliDense}, \ref{tb:SerDupliDenseNoise5}, \ref{tb:SerDupliDenseNoise10}, and some qualitative results are shown in Figure~\ref{fig:SerDuplDenseQualitResults}.
We also conducted experiments 
%where we add random noise to the matrices $S$, and experiments 
with sparse, band matrices $S \in \cM_N(\delta,s)$ as in Section~\ref{sec:RobustSeriation}. The results are shown in Tables~\ref{tb:SerDupliSparseBD40}, \ref{tb:SerDupliSparseBD40-full}, \ref{tb:SerDupliSparseBD20-full}, and some qualitative results are shown in Figure~\ref{fig:SerDuplSparseQualitResults}.
The $\eta$-Spectral method works best for dense matrices, and is outperformed by H-UBI for maatrices in $\cM_N(\delta,s)$.
We observe that, as expected, the recovered assignment $Z^{\text{out}}$ is closer to $Z$ when $N/n$ is smaller. However, the D2S scores and the qualitative Figures~\ref{fig:SerDuplDenseQualitResults} and \ref{fig:SerDuplSparseQualitResults} suggest that for large $N/n$, the recovered matrix $S^{\text{out}}$ may be close to $S$ although the assignment is not well recovered. Intuitively, this means the problem is degenerate, with several assignment matrices roughly leading to the same matrix $S$, and the algorithm finds one of these.

\begin{table}[t]
	\caption{
		Results of synthetic experiments for Seriation with Duplications from matrices $S \in \cM_N(\delta,s)$ with $n=200$, $\delta=n/5$, $s=0$, and various values of $N/n$, where the \eqref{eqn:robustseriation1} problem is tackled with either Spectral, $\eta$-Spectral or H-UBI within Algorithm~\ref{alg:AltProjBase}.
		From the output $S^{{\text out}}$ and $Z^{{\text out}}$ of Algorithm~\ref{alg:AltProjBase} and the ground truth $S$ and $Z$ from which the data $A$ is generated,
		D2S is the relative Froebenius distance between $S$ and $S^{{\text out}}$,
		Huber is the \eqref{eqn:huberSUM} loss on $S$,
		meanDist, stdDist and medianDist are the average, standard deviation and median of the distance between the positions assigned to a index $k$ by $Z$ and $Z^{out}$ (see main text for details).
		Time is the amount of CPU time elapsed until convergence of Algorithm~\ref{alg:AltProjBase}.
		}
	\label{tb:SerDupliSparseBD40}
	%\vskip 0.15in
	\begin{center}
		\begin{small}
			\begin{sc}
				\begin{tabular}{llccccc}
					\toprule
	$N/n$ & method & d2S & Huber  & meanDist & stdDist &  Time  \\ 
	      &        &     & {$\scriptstyle(\times1e-7)$} &        &       & {$\scriptstyle(\times1e-3s)$}\\
	\midrule 
 \multirow{3}*{\parbox{1.3cm}{$1.33$}}
 & spectral & 0.53 {$\scriptstyle \pm0.08 $} & 1.67 {$\scriptstyle \pm0.33 $} & 11.8 {$\scriptstyle \pm3.5 $} & 13.2 {$\scriptstyle \pm1.7 $} &  7.45 {$\scriptstyle \pm4.08 $} \\ 
 & $\eta$-Spectral & 0.12 {$\scriptstyle \pm0.06 $} & 0.76 {$\scriptstyle \pm0.06 $} & 0.8 {$\scriptstyle \pm0.8 $} & 2.4 {$\scriptstyle \pm2.2 $} & 2.85 {$\scriptstyle \pm1.78 $} \\ 
 & H-UBI & {\bf 0.09 {$\scriptstyle \pm0.06 $}} & {\bf 0.74 {$\scriptstyle \pm0.05 $}} & {\bf 0.6 {$\scriptstyle \pm0.6 $}} & 1.8 {$\scriptstyle \pm1.9 $} &  3.99 {$\scriptstyle \pm2.76 $} \\ 
 \midrule 
 \multirow{3}*{\parbox{1.3cm}{$2$}}
 & spectral & 0.38 {$\scriptstyle \pm0.05 $} & 1.48 {$\scriptstyle \pm0.26 $} & 10.3 {$\scriptstyle \pm4.2 $} & 10.5 {$\scriptstyle \pm2.8 $} &  1.30 {$\scriptstyle \pm0.25 $} \\ 
 & $\eta$-Spectral & 0.21 {$\scriptstyle \pm0.04 $} & 0.99 {$\scriptstyle \pm0.12 $} & 4.1 {$\scriptstyle \pm4.1 $} & 6.9 {$\scriptstyle \pm3.9 $} &  0.50 {$\scriptstyle \pm0.19 $} \\ 
 & H-UBI & {\bf 0.19 {$\scriptstyle \pm0.05 $}} & {\bf 0.96 {$\scriptstyle \pm0.14 $}} & {\bf 4.0 {$\scriptstyle \pm5.8 $}} & 6.2 {$\scriptstyle \pm4.6 $} &  0.79 {$\scriptstyle \pm0.31 $} \\ 
 \midrule
 \multirow{3}*{\parbox{1.3cm}{$4$}}
 & spectral & 0.29 {$\scriptstyle \pm0.02 $} & 1.45 {$\scriptstyle \pm0.09 $} & 18.4 {$\scriptstyle \pm4.5 $} & 11.8 {$\scriptstyle \pm3.1 $} &  1.34 {$\scriptstyle \pm0.23 $} \\ 
 & $\eta$-Spectral & 0.22 {$\scriptstyle \pm0.02 $} & 1.29 {$\scriptstyle \pm0.06 $} & 16.3 {$\scriptstyle \pm6.8 $} & 12.2 {$\scriptstyle \pm5.1 $} &  0.61 {$\scriptstyle \pm0.14 $} \\ 
 & H-UBI & {\bf 0.22 {$\scriptstyle \pm0.02 $}} & {\bf 1.26 {$\scriptstyle \pm0.06 $}} & {\bf 15.9 {$\scriptstyle \pm7.2 $}} & 12.0 {$\scriptstyle \pm5.6 $} &  0.91 {$\scriptstyle \pm0.25 $} \\

					\bottomrule
				\end{tabular}
			\end{sc}
		\end{small}
	\end{center}
	\vskip -.1in
\end{table}

\begin{table}[t]
	\caption{
		Results of synthetic experiments for Seriation with Duplications from dense, strong-R matrices of size $n=200$, with the same metrics and methods as in Table~\ref{tb:SerDupliSparseBD40}, with $\gamma=0.5$.
	}
	\label{tb:SerDupliDense}
	%\vskip 0.15in
	\begin{center}
		\begin{small}
			\begin{sc}
				\begin{tabular}{llcccccc}
					\toprule
					$N/n$ & method & d2S & Huber  & meanDist & stdDist & Time \\ 
					    &      &   & {$\scriptstyle(\times 1e-7)$}&       &      & {$\scriptstyle(\times 1e-2s)$}\\
				    \midrule
					\multirow{3}*{\parbox{1.3cm}{$1.33$}}
					& spectral & 0.25 {$\scriptstyle \pm0.04$} & 1.36 {$\scriptstyle \pm0.03$} & 6.1 {$\scriptstyle \pm1.8$} & 7.9 {$\scriptstyle \pm1.6$} & 8.74 {$\scriptstyle \pm4.85$} \\ 
					& $\eta$-Spectral & {\bf 0.15 {$\scriptstyle \pm0.02$}} & {\bf 1.30 {$\scriptstyle \pm0.01$}} & {\bf 2.2 {$\scriptstyle \pm0.7$}} & 3.7 {$\scriptstyle \pm1.1$} & 6.12 {$\scriptstyle \pm4.84$} \\ 
					& H-UBI & 0.24 {$\scriptstyle \pm0.04$} & 1.35 {$\scriptstyle \pm0.03$} & 5.5 {$\scriptstyle \pm1.6$} & 7.3 {$\scriptstyle \pm1.4$} & 11.06 {$\scriptstyle \pm7.56$} \\ 
					
					\midrule
					\multirow{3}*{\parbox{1.3cm}{$2$}} 
					& spectral & 0.27 {$\scriptstyle \pm0.02$} & 1.41 {$\scriptstyle \pm0.02$} & 9.5 {$\scriptstyle \pm1.6$} & 8.4 {$\scriptstyle \pm1.3$} & 7.47 {$\scriptstyle \pm3.20$} \\ 
					& $\eta$-Spectral & {\bf 0.22 {$\scriptstyle \pm0.02$}} & {\bf 1.37 {$\scriptstyle \pm0.02$}} & {\bf 6.6 {$\scriptstyle \pm1.5$}} & 6.7 {$\scriptstyle \pm1.9$} & 7.89 {$\scriptstyle \pm3.89$} \\ 
					& H-UBI & 0.26 {$\scriptstyle \pm0.02$} & 1.40 {$\scriptstyle \pm0.02$} & 9.0 {$\scriptstyle \pm1.5$} & 8.1 {$\scriptstyle \pm1.2$} & 10.09 {$\scriptstyle \pm4.90$} \\ 
					
					\midrule
					\multirow{3}*{\parbox{1.3cm}{$4$}}
					& spectral & 0.18 {$\scriptstyle \pm0.01$} & 1.35 {$\scriptstyle \pm0.01$} & 14.4 {$\scriptstyle \pm2.8$} & 8.7 {$\scriptstyle \pm2.7$} & 6.53 {$\scriptstyle \pm1.90$} \\ 
					& $\eta$-Spectral & {\bf 0.18 {$\scriptstyle \pm0.01$}} & {\bf 1.35 {$\scriptstyle \pm0.01$}} & {\bf 14.3 {$\scriptstyle \pm2.9$}} & 8.9 {$\scriptstyle \pm2.9$} & 7.59 {$\scriptstyle \pm2.28$} \\ 
					& H-UBI & 0.19 {$\scriptstyle \pm0.01$} & 1.35 {$\scriptstyle \pm0.01$} & 14.8 {$\scriptstyle \pm2.5$} & 8.8 {$\scriptstyle \pm2.1$} & 8.62 {$\scriptstyle \pm2.46$} \\ 

					\bottomrule
				\end{tabular}
			\end{sc}
		\end{small}
	\end{center}
	\vskip -.1in
\end{table}

\section{Conclusion}
We introduced the Robust Seriation problem, which arises in \eg~\textit{de novo} genome assembly. We show that for a class of similarity matrices modeling those observed in genome assembly, the problem of Robust Seriation is equivalent to a modified 2-SUM problem. This modified problem can be relaxed, with an objective function using a Huber loss instead of the squared loss present in 2-SUM. We adapt several relaxations of permutation problems to this 2-SUM problem with Huber loss and also introduce new relaxations, including the $\eta$-Spectral method, which is computationally efficient and performs well in our experiments. Notably, it successfully reorders a bacterial genome from third generation sequencing data.
Finally, we introduced the framework of Seriation with Duplications, which is a generalization of Robust Seriation, with the aim of analyzing Hi-C data from cancer genomes. We show promising qualitative results on synthetic data.

\section*{Acknowledgements}
AA is at CNRS, d\'epartement d'informatique, \'Ecole normale sup\'erieure, UMR CNRS 8548, 45 rue d'Ulm 75005 Paris, France,  INRIA and PSL Research University. The authors would like to acknowledge support from the {\em data science} joint research initiative with the {\em fonds AXA pour la recherche} and Kamet Ventures.
AR would like to thank Vincent Roulet for fruitful discussions.

{\small \bibliographystyle{plainnat}
\bibsep 1ex
\bibliography{RobustSeriationDuplications.bib}}

\clearpage

\beginsupplement
\onecolumn

\section{Supplementary Material}

\subsection{Seriation and Robust Seriation Algorithms}

\subsubsection{Spectral ordering algorithm}\label{sssec:spectral-ordering}
The spectral ordering algorithm \citep{Atkins} is the baseline for solving \ref{eqn:seriation}. It is closely related to the well-known spectral clustering algorithm \citep{von2007tutorial}. For any vector $\mathbf{x}$, the objective in~\eqref{eqn:2sum} reads
\begin{align}
\label{eqn:2sumQuad}
\sum_{i,j=1}^{n} A_{ij} \left(x_i - x_j \right)^2 = x^T L_A x
\end{align}
where $L_A =\diag(A\ones)-A$ is the Laplacian matrix of $A$. This means that the 2-SUM problem amounts to
\[
\min_{\pi \in \cP} \pi^T L_A \pi
\]
where $\pi$ is a permutation vector.
The most aggressive relaxation to the problem would be to fully drop the constraints $\pi \in \cP$ and solve the unconstrained problem,
\[
\min_{x \in \reals^n} x^T L_A x
\]
The solution to this relaxation is $x^* = 0$, for which the objective achieves the minimum $0$ (note that from the expression of \eqref{eqn:2sumQuad}, $L_A$ is positive semi-definite, since $A$ has non-negative entries).
Projecting $x^*$ on $\cP_n$ is a degenerate problem : all permutation vectors are at the same distance from $0$ (\ie, they all have the same norm, since they all have the same set of values).
Some additional constraints are thus necessary to make the relaxation useful.
Since all permutation vectors have the same norm $c = \| \pi \|$ (in other words, they all belong to a sphere $\Sigma_n$), a slightly more refined relaxation is to optimize over that sphere,
\[
\min_{x \: : \: \| x \| = c } x^T L_A x
\]
This problem is no other than an eigenvector problem. Its solution is  $x^* = c \ones_n$, which also achieves the minimum $0$ (see the expression from \eqref{eqn:2sumQuad} when all $x_i$ are equal).
Again, this optimum is non-informative. Any permutation $\pi$ has the same distance to $x^*$, $d = \sum_{i=1}^{n} (c - i)^2$, thus projecting back $x^*$ to $\cP_n$ is totally degenerate. This can be viewed geometrically on Figure~\ref{fig:goodVsBadTieBreakPH3}.
Now, we can add an orthogonality constraint to the smallest eigenvector, $\ones$,
\[
\min_{x \: : \: \| x \| = c, \: x \perp \ones } x^T L_A x
\]
It still is an eigenvector problem. The solution is the second smallest eigenvector, called the Fiedler vector. A permutation is recovered from this eigenvector by sorting its coefficients (which is impossible for $\ones$): given $\mathbf{x} = (x_1, x_2,  ..., x_n)$, the spectral algorithm outputs a permutation $\pi$ such that $x_{\pi(1)} \leq x_{\pi(2)} \leq ... \leq x_{\pi(n)}$. This procedure is summarized as Algorithm \ref{alg:spectral}.

\subsubsection{Spectral Relaxation for Robust Seriation}\label{sssec:eta-spectral}
We describe here the method $\eta$-Spectral, for reordering a matrix into an approximate R-matrix. The absolute value of a real number $x \in \reals$ can be expressed as the solution of an minimization problem over a real variable $\eta$,
\begin{equation*}
|x| = \argmin_{\eta \geq 0} \frac{x^2}{\eta} + \eta
\end{equation*}
Similarly, the Huber function defined by
\begin{equation*}
h_{\delta}(x) = \left\{
\BA{ll}
x^2  &  \mbox{if $ |x| \leq \delta$,}\\
\delta (2|x| - \delta) & \mbox{otherwise}\\
\EA
\right.
\end{equation*}
can be expressed as 
\begin{equation*}
h_{\delta}(x) = \argmin_{0 \leq \eta \leq \delta} \frac{x^2}{\eta} + \eta
\end{equation*}
Using this variational form, we can write \ref{eqn:huberSUM} as an optimization problem over variables $\pi$ and $\eta \in \reals^{n \times n}$,
\begin{align}\tag{$\eta$-HuberSUM}\label{eqn:etaTrickHuberSUM}
\BA{ll}
\mbox{minimize} &  \sum_{i,j=1}^n 
A_{ij} \left ( \frac{(\pi_i - \pi_j)^2}{\eta_{ij}} + \eta_{ij} \right) \\
\multirow{2}{2cm}{\st} & \pi \in \cP, \\
& 0 \leq \eta_{ij} \leq \delta, \: \mbox{for all } i,j.
\EA
\end{align}
The objective in \ref{eqn:etaTrickHuberSUM} is jointly convex in $(\pi, \eta)$ (sum and combination of linear functions with quadratic over linear). The constraint set for $\eta$ is convex, and although $\cP$ is not, it can be relaxed to $\cPH$. However we found empirically that an alternate minimization scheme that is not based on convex optimization but rather exploits the efficiency of the spectral algorithm shows good performances. We present it in Algorithm~\ref{alg:etaTrickHuberSUM}.
We use the spectral algorithm to (approximately) solve \eqref{eqn:2sum} in step 3. Here, $\gamma$ is a parameter that controls the influence of the previous iterates of $\eta$, the case $\gamma=0$ is just plain alternate minimization.
In practice, we evaluate the objective of \eqref{eqn:huberSUM} for $A$ and $\pi^{(t)}$ at each iteration, and keep the iterate $\pi$ with the lowest score.

\subsubsection{Unconstrained optimization with Iterative Bias (UBI)}\label{sssec:Uncons}

We propose an iterative method where each outer iteration $t$ solves a subproblem biased towards a direction $x^*_{(t-1)}$ given by the previous iteration, with an unconstrained optimization descent method (instead of Frank-Wolfe), in Algorithm~\ref{alg:Uncons}. %AA: give examples of h
The bias is added in order to avoid the trivial minimum in the center of $\cPH$.
\begin{algorithm}[ht]
	\caption{Iterative scheme with biased unconstrained optimization in $\cH_n$.}
	\label{alg:Uncons}
	\begin{algorithmic} [1]
		\REQUIRE An objective function $f$, an initial bias direction $\pi^{(0)} \in \cP_n$, an increasing bias function $h : \reals \rightarrow \reals$, a maximum number of outer iterations $T$, an optimization algorithm $\mathcal{A}$.
		\STATE Set $t = 0$
		\WHILE{$t < T$}
		\STATE Compute 
		\[
		x^{(t+1)}_* \in \argmin_{x \in \cH_n} \left\{ f(x) - h(x^T x^{(t)}_*) \right\}\quad \mbox{using algorithm $\mathcal{A}$.}
		\]
		\STATE Set $\pi^{(t+1)} = \mathop{\rm argsort} x^{(t+1)}_*$
		\STATE  $t \gets t+1$.
		\ENDWHILE
		\ENSURE A permutation $\pi^{(T)}$.
	\end{algorithmic}
\end{algorithm}

In practice, $f$ is the objective from \ref{eqn:2sum} or \ref{eqn:huberSUM}, and $h$ is a scaled sigmoid function, \[h(t) = \frac{1}{1 + \exp(-\lambda t)}.\]
The sigmoid function is non convex but smooth. It is also bounded, so the optimum of $\left\{ f(x) - h(x^T \pi^{(t)}) \right\}$ is always finite with $f=f_{\text{2SUM}}$ and $f=f_{\text{Huber}}$.

\begin{figure}[htb]
	%\vskip 0.2in
	\begin{center}
		\centerline{\includegraphics[width=0.5\columnwidth]{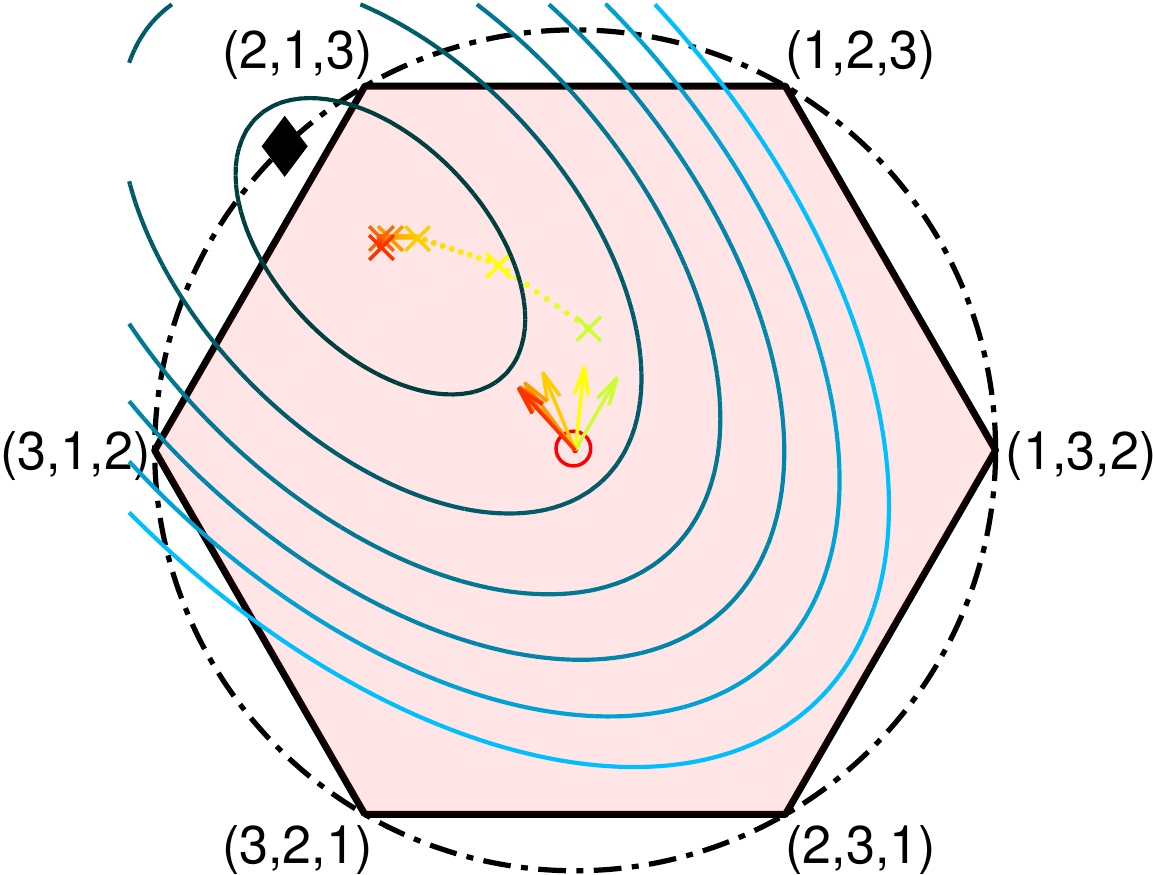}}
		\caption{
			Illustration of Algorithm~\ref{alg:Uncons} in
			the 3-Permutahedron $\cPH_3$ (filled polygon, same representation as in Figure~\ref{fig:goodVsBadTieBreakPH3}).
			The colored crosses (from flashy green (right) to red (left)) represent the solutions $x^{(t)}_*$ obtained at the outer loops of the algorithm, and the associated colored arrows in the center point towards the associated bias that was used at iteration $t$.
			In blue are the level lines of $\left\{ f(x) - h(x^T x^{(t)}_*) \right\}$ with $f=f_{\text{2SUM}}$ and $t=5$ (red arrow).
		}
		\label{fig:unconsPH3}
	\end{center}
	\vskip -0.2in
\end{figure}
Figure~\ref{fig:unconsPH3} illustrates the iterative procedure. The colored crosses indicate the minima of the sequence of biased functions. The last one (with the level lines) is biased towards the optimum. Empirically, we found better results by using $\pi^{(t)}$ rather than $x^{(t)}_*$ in step 3 of Algorithm~\ref{alg:Uncons}, which is what we eventually used in the experiments. Optimization of $f$ in $\cH_n$ is done through unconstrained optimization in $\reals^{n-1}$ of the composition of $f$ with an affine transformation described in the following paragraph (\S \ref{sssec:Hn-vs-Rn-1}).

\subsubsection{Implementation detail : from $\reals^{n-1}$ to $\cH_n$}\label{sssec:Hn-vs-Rn-1}

The idea of the methods UBI (\S~\ref{sssec:Uncons}) and Manopt is to relax the set of permutations $\cP_n$ to the whole space $\reals^n$, or to the sphere $\Sigma_n = \left\{x \in \reals^n \: : \: \|x\|_{2}^2 = \sum_{i=1}^{n} i^2 \right\}$, respectively.
However, we have seen in \S~\ref{sssec:sym-issue} and Figure~\ref{fig:goodVsBadTieBreakPH3} that the set of permutation lie in a hyperplane of dimension $n-1$, $\cH_n = \{ x \in \reals^n | x^T \ones =  \frac{n (n+1)}{2} \}$ (which simply means that all permutation vectors have the same sum).

Thus, we compute a basis of $\cH_n$ and use an affine transformation from $\reals^{n-1}$ to $\cH_n$ such that $0_{n-1}$ corresponds to $c_n = (n+1)/2 \ones_n \in \cH_n$.
In practice, we used,
$U = (u^{(1)}, \ldots, u^{(n-1)}) \in \reals^{n \times n-1}$, \eg, $u^{(j)} = \frac{\tilde{u}^{(j)}}{\| \tilde{u}^{(j)} \|}$, with
\[\left\{\BA{ll}
\tilde{u}^{(j)}_i = 0 & \mbox{if $i < j$,}\\
\tilde{u}^{(j)}_{j} = -j & \\
\tilde{u}^{(j)}_i = 1 & \mbox{if $i > j$,}
\EA\right.\]
The vectors $\{u^{(j)}\}_{1 \leq j \leq n-1}$ are orthonormal and are all orthogonal to $\ones_n$.
Any point $x \in \cH_n$ can be written as $x = \mathcal{A}(y) \triangleq Uy + c_n$ with $y \in \reals^{n-1}$.
For any $f \: : \reals^n \rightarrow \reals$, we define $f_{\cH_n} \: : \reals^{n-1} \rightarrow \reals$ by $f_{\cH_n}(y) = f(Uy + c_n)$ in order to perform unconstrained optimization (UBI) on $f_{\cH_n}$.
The intersection of the sphere $\Sigma_n$ with $\cH_n$, represented by the black dashed circle in Figure~\ref{fig:goodVsBadTieBreakPH3}, is the transformation of a sphere $\tilde{\Sigma}_{n-1}$ by $\mathcal{A}$, so we used (Manopt) with $f_{\cH_n}$ on a sphere in $\reals^{n-1}$ in the experiments.

\subsubsection{Frank-Wolfe with Tie-Break (FWTB) : Linear Minimization Oracle}\label{sssec:FWTB-LMO}
The conditional gradient (Frank-Wolfe) algorithm is suited to optimization in $\cPH_n$ since the linear minimization oracle (LMO) performed at each iteration boils down to sorting the entries of a vector $g \in \reals^n$ (hence, it has a computational complexity of $O(n \log n)$).
Specifically, the LMO solves,
\begin{align}
\tag{LMO} \label{eqn:LMO-PH}
\BA{ll}
\mbox{minimize}  & \sum_{i=1}^n 
\pi_i g_i \\
\st & \pi \in \cPH_n
\EA
\end{align}
where $g_i$ is the $i$-th entry of the gradient of the loss function. This linear form is minimized on a vertex of $\cPH$, \ie~on a permutation $\pi^*$. Let $\sigma \in \cP_n$ be a permutation that sorts the entries of $g$ by decreasing order, such that $g_{\sigma_1} \geq \ldots \geq g_{\sigma_n}$, then $\pi^*$ is defined by $\pi^{*}_{\sigma_1} = 1, \ldots, \pi^{*}_{\sigma_n} = n$.

The method (FWTB) adds a tie-breaking constraint  (\eg, $\pi_1 + 1 \leq \pi_n$) in order to break the symmetry and exclude the center $c_n$ from the feasible set, as suggested by \citet{lim2014beyond}.
Yet, while \citet{Fogel, lim2014beyond} proposed convex optimization methods that could incorporate any such linear constraint into the problem seamlessly,
if one wants to use Frank-Wolfe in the restriction of $\cPH$ where the tie-break is satisfied, the \ref{eqn:LMO-PH} has to be modified.
The new LMO must solve,
\begin{align}
\tag{LMO-tb} \label{eqn:LMO-tiebreak}
\BA{ll}
\mbox{minimize}  & \sum_{i=1}^n 
\pi_i g_i \\
\st  & \left\{ 
\BA{l} \pi \in \cPH_n , \\
\pi_i + 1 \leq \pi_j. 
\EA
\right.
\EA
\end{align}
where we let $1 \leq i \neq j \leq n$ be the tie-break indexes  (in \citet{Fogel, lim2014beyond}, $i=1$ and $j=n$).
\citet{lim2014beyond} propose an algorithm for solving \ref{eqn:LMO-PH} that preserves the $O(n \log n)$ complexity of the \ref{eqn:LMO-PH}.
We describe in Algorithm~\ref{alg:LMO-tb} a slightly simplified version of theirs, for any tie-break indexes $1 \leq i \neq j \leq n$. We use the matlab-like notation $x(i)$ to denote $x_i$ for ease of reading. 

\begin{algorithm}[ht]
	\caption{Minimizing $g^T \pi$ over $\cPH_n$ with tie-break $\pi(i) + 1 \leq \pi(j)$.}
	\label{alg:LMO-tb}
	\begin{algorithmic} [1]
		\STATE $g^\prime, \sigma \gets $ sort $g$ in decreasing order ( \ie, $g(\sigma_1) \geq \ldots \geq g(\sigma_n)$ )
		\FOR{$k \gets 1$ to $n-1$}
		\IF{$g^\prime(k) < \frac{g(i) + g(j)}{2}$}
		\STATE \textbf{break}
		\ENDIF
		\STATE $\sigma^{-1} \gets \argsort \sigma$
		\STATE Set $\tilde{z} = (1, \ldots, k-1, k+2, \ldots, n)^T \in \reals^{n-2}$
		\STATE $\pi(l) \gets \tilde{z}(\sigma^{-1}(l))$ for $l \in \{1,\ldots,n\} \smallsetminus \{i,j\}$
		\STATE $\pi(i) \gets k$
		\STATE $\pi(j) \gets k+1$.
		\ENDFOR
		\ENSURE A permutation $\pi^{(T)}$.
	\end{algorithmic}
\end{algorithm}

\begin{proposition}
	Algorithm~\ref{alg:LMO-tb} minimizes $g^T \pi$ over $\cPH_n$ with tie-break $\pi(i) + 1 \leq \pi(j)$.
\end{proposition}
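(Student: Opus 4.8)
The plan is to show that Algorithm~\ref{alg:LMO-tb} returns a vertex of the feasible polytope $P = \cPH_n \cap \{\pi : \pi_i + 1 \le \pi_j\}$ achieving the minimum of the linear objective $g^T\pi$. Since $g^T\pi$ is linear and $P$ is a polytope, a minimizer is attained at a vertex of $P$, so I would first reduce the problem to minimizing over permutation vectors. For this I would prove that every vertex of $P$ is integral, i.e.\ a genuine permutation satisfying $\pi_i + 1 \le \pi_j$: recalling that two vertices of $\cPH_n$ are adjacent iff the corresponding permutations differ by transposing two consecutive values, I would check that along any edge of $\cPH_n$ the quantity $\pi_j - \pi_i$ is affine and takes integer values at the two endpoints, so the hyperplane $\pi_j - \pi_i = 1$ can only meet an edge at an (integral) endpoint and never in its relative interior. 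Hence the cut introduces no fractional vertices and $\min_{P} g^T\pi = \min\{ g^T\pi : \pi \in \cP_n,\ \pi_i + 1 \le \pi_j \}$.

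Next I would characterize the optimal permutation. Fixing the two values assigned to positions $i$ and $j$, the remaining $n-2$ values are distributed over the remaining positions, and by the rearrangement inequality the cost is minimized by assigning larger values to smaller gradient entries; this is exactly what the sort $\sigma$ and the vector $\tilde z$ encode in lines 1 and 7. It remains to determine the two values placed at $i,j$. I would argue by an exchange argument that, when the tie-break is active, an optimum assigns consecutive values $\pi_i = k$, $\pi_j = k+1$: if the two values were non-adjacent one could slide them together, strictly decreasing the objective while preserving feasibility. This justifies the form $\pi(i) = k$, $\pi(j) = k+1$ produced in lines 8--9.

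Finally I would prove that the stopping rule in line 3 selects the optimal level $k$. Writing $C(k)$ for the cost of the block solution with $\pi_i=k,\ \pi_j=k+1$ and the rest filled by rearrangement, a direct computation gives $C(k+1) - C(k) = g_i + g_j - 2 g_m$, where $g_m$ is the gradient of the single entry displaced when the block is shifted by one level; thus $C$ is discretely convex and decreases exactly while $g_m > (g_i + g_j)/2$. Matching $g_m$ with the sorted sequence $g'$, this shows $C$ is minimized at the first level where $g'(k)$ drops below $(g_i+g_j)/2$, which is precisely where the loop stops. Combining the three steps shows the returned $\pi$ minimizes $g^T\pi$ over $P$.

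The main obstacle I anticipate is the last step together with the case analysis it hides. I must make the displaced-entry bookkeeping rigorous (which gradient $g_m$ is swapped out at each level, and the reindexing between $\sigma$, $\sigma^{-1}$ and $\tilde z$), and I must separately handle the regime where the tie-break is inactive, in which case the unconstrained rearrangement optimum is already feasible and must coincide with the algorithm's output. Reconciling the threshold comparison $(g_i+g_j)/2$ with \emph{both} regimes, and checking the boundary levels $k=1$ and $k=n-1$, is where the delicate part of the argument lies.
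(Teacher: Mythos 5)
Your route is essentially the paper's: restrict to permutations with $\pi(i)=k$, $\pi(j)=k+1$, fill the remaining positions by Smith's rule, and show the cost difference between consecutive levels is $2\tilde g_K-(g_i+g_j)$ (with $\tilde g$ the sorted gradient with entries $i,j$ removed), which is monotone, so the optimum sits at the first sign change against the threshold $(g_i+g_j)/2$. The one genuine addition in your plan is the polyhedral step showing the cut $\pi_j-\pi_i\geq 1$ creates no fractional vertices of $\cPH_n$; the paper works directly over $\cP_n$ and delegates the structure of the constrained optimum to \citet{lim2014beyond}, so your edge argument is a worthwhile complement and does go through.

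The gap is exactly where you say the delicate part lies, and one deferred claim is false as stated. The loop in Algorithm~\ref{alg:LMO-tb} always returns a permutation with $\pi(j)=\pi(i)+1$, so when the unconstrained sorted solution $\pi^*$ of \ref{eqn:LMO-PH} satisfies the tie-break with slack it does \emph{not} ``coincide with the algorithm's output''; it is a strictly better feasible point. Concretely, for $n=3$, $g=(3,2,1)$, $i=1$, $j=3$, the sorted solution $(1,2,3)$ is feasible with value $10$, while the loop never breaks and returns $(2,1,3)$ with value $11$. The paper's proof dispatches this regime in its opening sentences (if $\pi^*$ is feasible it already solves \ref{eqn:LMO-tiebreak}; otherwise the constraint is active at an optimum), a correct statement about the optimization problem that is simply not reflected in the pseudocode; your proof must either add that branch explicitly or prove activity of the constraint at some optimum whenever $\pi^*$ is infeasible. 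On that point your exchange argument also needs repair: sliding the two values together changes the objective by $g_i-g_m$ or $g_{m'}-g_j$ for the displaced positions $m,m'$, which is not strictly negative in general, so the correct conclusion is only that \emph{some} optimum has consecutive values, obtained from the optimality inequalities $g_i\geq g_m$ and $g_{m'}\geq g_j$. Finally, in the finite-difference step the displaced entry at level $k$ is $\tilde g_k$, the $k$-th largest of $g$ \emph{excluding} $g_i$ and $g_j$, not $g'(k)$; the two differ as soon as $g_i$ or $g_j$ ranks among the top $k$ entries, so the identification of the stopping index must be carried out with $\tilde g$ as in the paper's computation.
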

\begin{proof}
	Without loss of generality, let us assume for simplicity that $g$ is already sorted by decreasing value. Let $\pi^*$ be the solution of \ref{eqn:LMO-PH}. If $\pi^*_i + 1 \leq \pi^*_j$, then $\pi^*$ is also solution of \ref{eqn:LMO-tiebreak}. Otherwise, the solution of \ref{eqn:LMO-tiebreak} will be a permutation $\pi$ where the constraint is active : $\pi_i + 1 = \pi_j$ \citep{lim2014beyond}.
	Let $k=\pi_i$. There are $n-1$ possible values for $k$ : $\{1,\ldots, n-1\}$. For a given $k$,  the vector $\tilde{\pi}_k$, the restriction of $\pi$ to the $n-2$ indexes other than $i$ and $j$ is given by Smith's rule : it is the concatenation of the remaining values $ \tilde{\pi}_k = (1, \ldots, k-1, k+2, \ldots, n)$ (given that $g$ is sorted). Therefore, the permutation $\pi$ optimal for \ref{eqn:LMO-tiebreak} is determined by $k$. Let us note $\tilde{g} \in \reals^{n-2}$ the vector $g$ without the two entries corresponding to indexes $i$ and $j$, that is to say, if $i < j$, $\tilde{g} = (g_1, \ldots, g_{i-1}, g_{i+1}, \ldots, g_{j-1}, g_{j+1}, \ldots, n)$.
	To know the optimal value of $k$, let us observe the difference between the objective of \ref{eqn:LMO-tiebreak} for $k=K$ and $k=K+1$, with $ 1 \leq K \leq n-2$.
	For a given $k$, he objective in \ref{eqn:LMO-tiebreak} can be written as the sum $\tilde{g}^T \tilde{\pi}_{k} + k g_i + (k+1) g_j$. Let us write the tilde scalar product part first.
	\begin{equation*}
	\BA{rlllllllllll}
	\tilde{g}^T \tilde{\pi}_{K}  & = 1 \tilde{g}_{1}   +  2 \tilde{g}_{2} & + \ldots + & (K-1) \tilde{g}_{K-1} & + & (K+2) \tilde{g}_{K} & + & (K+3) \tilde{g}_{K+1} & +  \ldots  +  n \tilde{g}_{n-2}\\ 
	\tilde{g}^T \tilde{\pi}_{K+1} & = 1 \tilde{g}_{1}   +  2 \tilde{g}_{2} & + \ldots + & (K-1) \tilde{g}_{K-1} & + & K \tilde{g}_{K} & + & (K+3) \tilde{g}_{K+1} & +  \ldots  +  n \tilde{g}_{n-2}\\ 
	\EA
	\end{equation*}
	The difference between the objective values for $k=K$ and $k=K+1$ is therefore $\Delta_K = 2 \tilde{g}_{K} - (g_i + g_j)$.
	Since we assumed $g$ sorted by decreasing order, $\tilde{g}$ also is, and consequently, $\Delta_K$ decreases with $K$. The optimal $K^*$ is therefore the smallest (first) index $k$ for which $\tilde{g}_{k} < \frac{(g_i + g_j)}{2}$, and if $\tilde{g}_{k} \geq \frac{(g_i + g_j)}{2}$ for all $k \in \{1, \ldots, n-2\}$, then $K^* = n-1$.
\end{proof}

\subsection{Frank-Wolfe with Tie-Break (FWTB) is biased}\label{ssec:FWTB-biased}
Using a tie-break constraint to break the symmetry and solve a problem such as \ref{eqn:2sum} or \ref{eqn:huberSUM} can perform surprisingly poorly because the tie-break constraint actually introduces a bias in the problem. To illustrate this, let us focus on the \ref{eqn:2sum} problem. The loss function is homogeneous,
\begin{equation*}
f_{{\text 2SUM}} (t x) = \sum_{i,j} A_{ij} (t x_i - t x_j)^2 = t^2  \sum_{i,j} A_{ij} (x_i - x_j)^2 = t^2 f_{{\text 2SUM}} (x).
\end{equation*}
Similarly, $ f_{{\text 1SUM}} (t x) = t f_{{\text 1SUM}} (x) $ for $t>0$.
Hence, scaling down a given vector $x$, \eg, letting $x \gets \frac{1}{2} x$, reduces the objective function but does not add information about the optimal permutation (the projection on the set of permutations is the same for both vectors).
What we are interested in is to find a direction $x^*$ which is optimal compared to other vectors $x$ of same norm. In the original problem over permutations, all permutation vectors have the same norm. In the spectral relaxations, we optimize over a sphere. However, when we relax to $\cPH$, the most prominent descent direction of the function is towards the center. The tie-breaking constraint prevents iterates reaching the center, but it adds a bias in a given direction because not all points saturating the tie-breaking contraint have the same norm nor the same distance to the center. On the set of points in $\cPH_n$ where the tie-break is active, \eg, $\{ x \in \cPH_n \:|\: x_1 + 1 \leq x_n \}$, the point $\tilde{c} = c +  e_n - \frac{1}{n} \ones $ has a squared distance to $c$ and $\ell_2$ norm : $\| \tilde{c} - c \|_{2}^2 \simeq 1$,  $\| \tilde{c}\|_{2}^2 \simeq \frac{n^3}{4}$, whereas a permutation $\pi$ that satisfies the constraints has a distance to $c$ that scales in $n^3$ and a larger norm : $\| \pi - c \|_{2}^2 \simeq \frac{n^3}{12}$,  $\| \pi \|_{2}^2 \simeq \frac{n^3}{3}$.
Therefore, although the direction $\tilde{c}$ may not be optimal for \ref{eqn:2sum} (compared to other vectors of same norm), the minimizer of \ref{eqn:2sum} with tie-break may be closer to the direction of $\tilde{c}$ than to the optimal one. 
This is what we observe in Figure~\ref{fig:goodVsBadTieBreakPH3} for the bad (orange, top-right triangle) tie-break.

When $n$ becomes large, this may actually lead to numerical precision issues. Indeed, the $n-1$ first entries of $\tilde{c}$ are equal. When the optimum $x^*$ in the tie-break-constrained $\cPH$ gets close to $\tilde{c}$, the variations among the $n-1$ first entries of $x^*$ also shrink and the precision required to sort them (in order to project back onto the set of permutations) may become too high.

In Figure~\ref{fig:goodVsBadTieBreakPH3}, we also display a good (green, top-left triangle) tie-break.
In practice, although there are $\binom{n}{2}$ non-redundant choices for the indexes $i$ and $j$ constituting a tie-breaking constraint $\pi_i + 1 \leq \pi_j$,
we can use the solution $\pi^{\text{spectr.}}$ of the cheap, spectral ordering (Algorithm~\ref{alg:spectral}) to find a good candidate tie-break. Specifically, chose $i \in \argmin \pi^{\text{spectr.}}$ and $j \in \argmax \pi^{\text{spectr.}}$.

The performances of FWTB with the naive ($i=1$, $j=n$) and spectral-initialized tie-breaking strategies are compared to that of the basic spectral Algorithm~\ref{alg:spectral} in Table~\ref{tb:KT-vs-s-FW-tb} (a -I is appended to the algorithm name for the spectral-initialized tie-break results), with the same experimental setup as in Section~\ref{sec:RobustSeriation} with matrices in $\cM_n(\delta, s)$.

\begin{table}[t]
	\caption{Kendall-$\tau$ score for different values of $s/s_{\text{lim}}$, for the spectral method and Frank-Wolfe with default and initialized tie-breaks (-I variants), with $n=200$, $\delta=20$.}
	\label{tb:KT-vs-s-FW-tb}
	\vskip .15in
	\begin{center}
		\begin{small}
			\begin{sc}
				\begin{tabular}{lcccccr}

					\toprule
					& $s/s_{\text{lim}}=0.5$ & $s/s_{\text{lim}}=1$ & $s/s_{\text{lim}}=2.5$ & $s/s_{\text{lim}}=5$ & $s/s_{\text{lim}}=7.5$ & $s/s_{\text{lim}}=10$ \\ 
					\midrule
					spectral & 0.96 {$\scriptstyle \pm 0.01 $} & 0.95 {$\scriptstyle \pm 0.01 $} & 0.91 {$\scriptstyle \pm 0.03 $} & 0.86 {$\scriptstyle \pm 0.06 $}  &  0.84 {$\scriptstyle \pm 0.06 $} & 0.80 {$\scriptstyle \pm 0.09 $} \\
					\midrule
					FWTB & 0.40 {$\scriptstyle \pm 0.27 $} &  0.33 {$\scriptstyle \pm 0.27 $} & 0.32 {$\scriptstyle \pm 0.26 $} & 0.34 {$\scriptstyle \pm 0.22 $} & 0.28 {$\scriptstyle \pm 0.21 $} & 0.24 {$\scriptstyle \pm 0.20 $} \\
					H-FWTB & 0.50 {$\scriptstyle \pm 0.31 $} & 0.36 {$\scriptstyle \pm 0.27 $} & 0.32 {$\scriptstyle \pm 0.25 $} & 0.32 {$\scriptstyle \pm 0.22 $} & 0.25 {$\scriptstyle \pm 0.21 $} & 0.22 {$\scriptstyle \pm 0.18 $} \\
					\midrule
					FWTB-i & 0.91 {$\scriptstyle \pm 0.20 $} & 0.92 {$\scriptstyle \pm 0.13 $} & 0.84 {$\scriptstyle \pm 0.19 $} & 0.73 {$\scriptstyle \pm 0.20 $} & 0.71 {$\scriptstyle \pm 0.13 $} & 0.64 {$\scriptstyle \pm 0.15 $} \\
					H-FWTB-i & 0.98 {$\scriptstyle \pm 0.01 $} & 0.94 {$\scriptstyle \pm 0.13 $}  &  0.86 {$\scriptstyle \pm 0.15 $}  & 0.70 {$\scriptstyle \pm 0.18 $} & 0.63 {$\scriptstyle \pm 0.15 $} & 0.57 {$\scriptstyle \pm 0.17 $}\\
					\bottomrule
					
				\end{tabular}
			\end{sc}
		\end{small}
	\end{center}
	\vskip -.1in
\end{table}
We can see that using a default tie-breaking constraint performs very poorly on average. Using the solution of the spectral algorithm to define the tie-breaking constraint significantly improves the performance compared to using a default tie-break. Still, it does not outperform the spectral algorithm except in a very low noise setting.
 
\subsection{Seriation with Duplication Algorithms}\label{ssec:SerDuplAlgo}

We now detail algorithmic solutions to several subproblems required by seriation with duplications.

\subsubsection{Projection on $\cR$ (step 3 of Algorithm~\ref{alg:AltProjBase})}\label{sssec:dupli-detail-proj-on-strongR}
In step 3 of Algorithm~\ref{alg:AltProjBase}, we wish to compute $(\Pi_*, S_*)$, solution of \eqref{eqn:robustseriation1} for $S^{(t)}$.
To do so, we can use one of the algorithms presented in Section~\ref{sec:RobustSeriation}.
However, these algorithms do not address the problem of \ref{eqn:robustseriation1} directly. Rather, they seek to find a permutation that is optimal for a objective function which coincides with \ref{eqn:robustseriation1} for the specific class of $\cM_n(\delta,s)$ matrices.
Two problems arise then. First, in our Seriation with Duplication setting \eqref{eqn:seriationdupli}, the matrices may not fit the class $\cM_n(\delta,s)$, especially when the matrix $S$ to be recovered is dense (and not a band matrix). Second, the output of the algorithm is a permutation $\Pi_*$, but what we are really interested in step 3 of Algorithm~\ref{alg:AltProjBase} is the matrix $S_* \in \cR^N$ that is the closest to $S^{(t)}$.
To approximate $S_* \in \cR^N$, we first use one of the methods introduced in Section~\ref{sec:RobustSeriation} to find a permutation $\Pi_*$ that makes $\Pi_* S^{(t)} \Pi^{T}_{*}$
as close to $\cR^N$ as possible.  Still, in general the permuted matrix $\Pi_* S^{(t)} \Pi^{T}_*$ will not be in $\cR^N$. We then project $\Pi_* S^{(t)} \Pi^{T}_*$ onto $\cR^N$, which is solved with linear programming.
Indeed, the projection, in $\ell_1$ norm for example of a matrix $S$, reads
\begin{align}\tag{R-proj} \label{eqn:R-proj}
\BA{ll}
\mbox{minimize} & \sum_{i,j=1}^{N} | R_{ij} - S_{ij} | \\
\st & R \in \cR^N.
\EA
\end{align}
We can also use a Froebenius norm and consider the sum of squares instead of the absolute differences. We would then use quadratic programming, as we have then a quadratic objective with linear constraints.
The constraint $R \in \cR^N$ can indeed be written as linear constaints on $R$. Specifically, we consider the vectorized forms of $S$ and $R$, $s,r \in \reals^{N^2}$, which are the concatenation of the columns of $S$ and $R$, respectively. Imposing $R \in \cR$ is equivalent to saying that $r_u \leq r_v$ for all pairs of indexes $(u,v)$ such that the corresponding subscripts for $u$ are on a diagonal higher than those for $v$. There is one linear constraint per pair $(u,v)$ (and there are $\frac{N(N-1)}{2}$ pairs), but we can reduce the number of constraints by adding slack variables $\{\lambda_k \}_{1 \leq k \leq N}$ and impose that for each element $r_u$ on a given diagonal $k$, $1 \leq k \leq N-1$, $r_u \leq \lambda_{k+1}$ and $r_u \geq \lambda_k$.
Finally, we can use {\it a priori} knowledge on how the values are supposed to decrease when moving away from the diagonal (\eg, a power law  $S_{ij} = |i-j|^{-\gamma}$ as in our experiments, which is consistent with the intra-chromosomal frequency observed in \citet{Lieberman2009Comprehensive}), to upper bound the values $\lambda_k$.
We end up with the following optimization problem over the variable $(r, \lambda)^T$,
\begin{align}\tag{R-proj} \label{eqn:R-proj-simple}
\BA{ll}
\mbox{minimize} & \| r - s \| \\
\st & C \begin{pmatrix}
r \\\lambda
\end{pmatrix} \leq 0, \\
& 0 \leq \lambda \leq b
\EA
\end{align}
where the matrix $C$ contains the strong-R constraints expressed between $r$ and $\lambda$, and the vector $b \in \reals^N$ contains upper bounds on the values of $\lambda_k$, \eg, $b_k = k^{-\gamma}$. 

\subsubsection{Projection on duplication constraints (step 4 of Algorithm~\ref{alg:AltProjBase})}\label{sssec:dupli-detail-proj-on-affine}
In step 4 of Algorithm~\ref{alg:AltProjBase}, we wish to compute the projection of $S$ on the set of matrices $X$ that satisfy $Z X  Z^{T} = A$, that is to say, solve the following optimization problem on variable $X$,
\begin{align}\tag{dupli-proj} \label{eqn:dupl-proj}
\BA{ll}
\mbox{minimize} & \sum_{k,l=1}^{N} | S_{kl} - X_{kl} | \\
\st & Z  X  Z^{T} = A.
\EA
\end{align}
The constraints impose that for each pair $(i,j) \in [1,n]\times [1,n]$, $A_{ij} = \sum_{k \in L_i, l \in L_j } X_{kl}$, where $L_i \subset [1,N]$ is the set of indexes assigned to $i$ through the assignment matrix $Z$.
The objective is also separable, since
\begin{equation*}
\sum_{k,l=1}^{N} | S_{kl} - X_{kl} | = \sum_{i,j=1}^{n} \sum_{k \in L_i, l \in L_j }  |S_{kl} - X_{kl}|
\end{equation*}
We can then solve separately, for each pair $(i,j)$, the subproblem,
\begin{align}\tag{dupli-proj(i,j)} \label{eqn:dupl-proj-sub}
\BA{ll}
\mbox{minimize} & \sum_{k \in L_i, l \in L_j } | S_{kl} - X_{kl} | \\
\st & A_{ij} = \sum_{k \in L_i, l \in L_j } X_{kl}.
\EA
\end{align}
For a given pair $(i,j)$, $L_i$ and $L_j$ are known (through $Z$), and if we consider the vectorization (stacking of the columns into a single vector) of the submatrices $X_{L_i, L_j}$ and $S_{L_i, L_j}$, denoted $x$ and $s$ respectively, and denote $a=A_{ij}$, the subproblem on the variable $x$ reads
\begin{align}\tag{dupli-proj(i,j)} \label{eqn:dupl-proj-sub-simple}
\BA{ll}
\mbox{minimize} & \| s - x \| \\
\st & x^T \ones = a,\\
& x \geq 0.
\EA
\end{align}
We impose non-negativity of the coefficients of $X$ since this is part of the definition of similarity matrices.
The above general problem of approximating a vector with a non-negative vector of fixed norm can be solved exactly when the norm is the $\ell_2$ norm (this solution is optimal for the $\ell_1$ norm too) with Algorithm~\ref{alg:dupli-proj}.
\begin{algorithm}[t]
	\caption{Minimizing $\| s - x \|$ with non-negativity ($x \geq 0$) and sum ($x^T \ones = a$) constraints.}
	\label{alg:dupli-proj}
	\begin{algorithmic} [1]
		\REQUIRE A target vector $s \in \reals^{p}_{+}$, a value $a \geq 0$.
		%		\IF{$a > s^T \ones$}
		%			\STATE $x \gets s + \frac{1}{p} ( a - \sum_{i=1}^{p} s(i) )$
		%			\ELSE
		\STATE $s^\prime, \sigma \gets $ sort $s$ in decreasing order ( \ie, $s(\sigma_1) \geq \ldots \geq s(\sigma_n)$ )
		\FOR{$k \gets 1$ to $n$}
		\STATE $\tilde{x}^{\prime}(k) \gets s^{\prime}(k) + \frac{1}{k} ( a -  \sum_{i=1}^{k} s^{\prime}(i) )$
		\IF{$\tilde{x}^{\prime}(k) < 0$}
		\STATE $k \gets k-1$
		\STATE \textbf{break}
		\ENDIF
		\ENDFOR
		
		\STATE $x^{\prime}(j) = s^{\prime}(j) + \frac{1}{k} ( a -  \sum_{i=1}^{k} s^{\prime}(i) )$ for $j=1,\ldots,k$
		\STATE $x^{\prime}(j) = 0$ for $j > k$
		\STATE $x(\sigma_j) = x^{\prime}(j)$ for $j=1,\ldots,p$.
		
		\ENSURE A vector $x \in \reals^{p}_{+} $.
	\end{algorithmic}
\end{algorithm}

\begin{figure}[p]
	%\vskip 0.2in
	\begin{center}
		
		\begin{subfigure}[htb]{0.3\textwidth}
			\includegraphics[width=\textwidth]{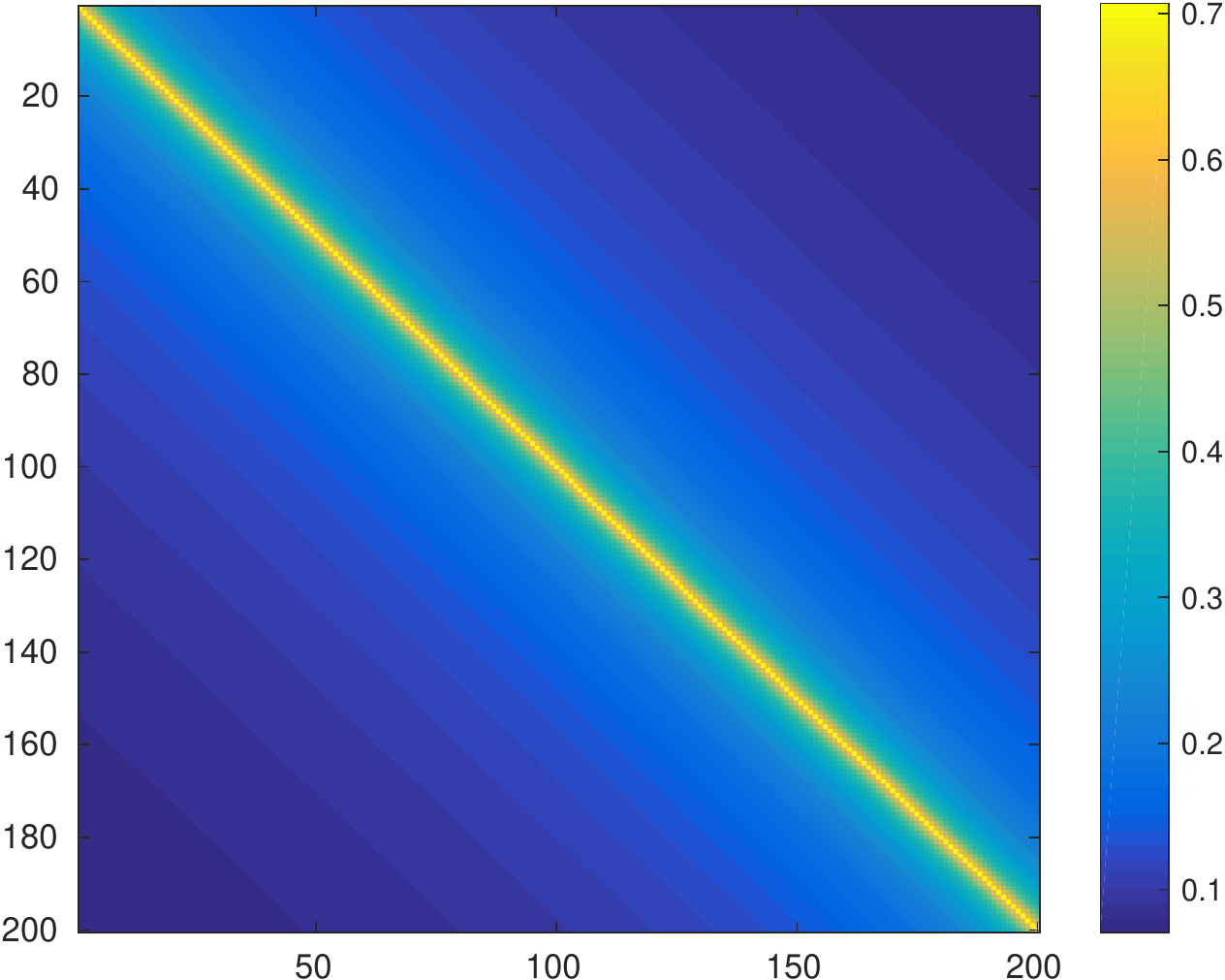}
			\caption{Ground truth $S$}\label{subfig:SerDuplDenseGroundTruth}
		\end{subfigure}
		\begin{subfigure}[htb]{0.3\textwidth}
			\includegraphics[width=\textwidth]{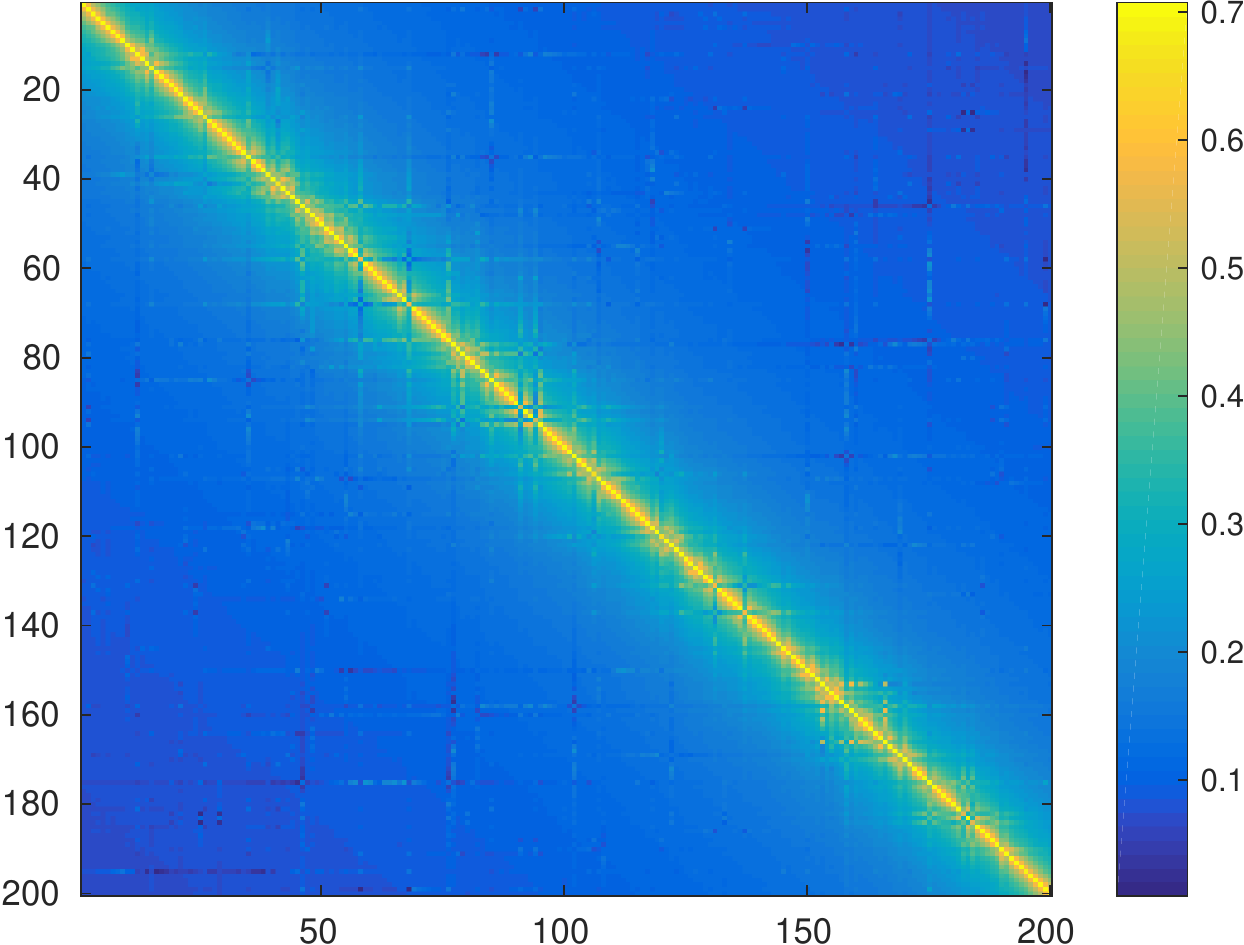}
			\caption{$S^{\text{out}}$ for $N/n=1.33$}\label{subfig:SerDuplDense_sz1}
		\end{subfigure}
		\begin{subfigure}[htb]{0.3\textwidth}
			\includegraphics[width=\textwidth]{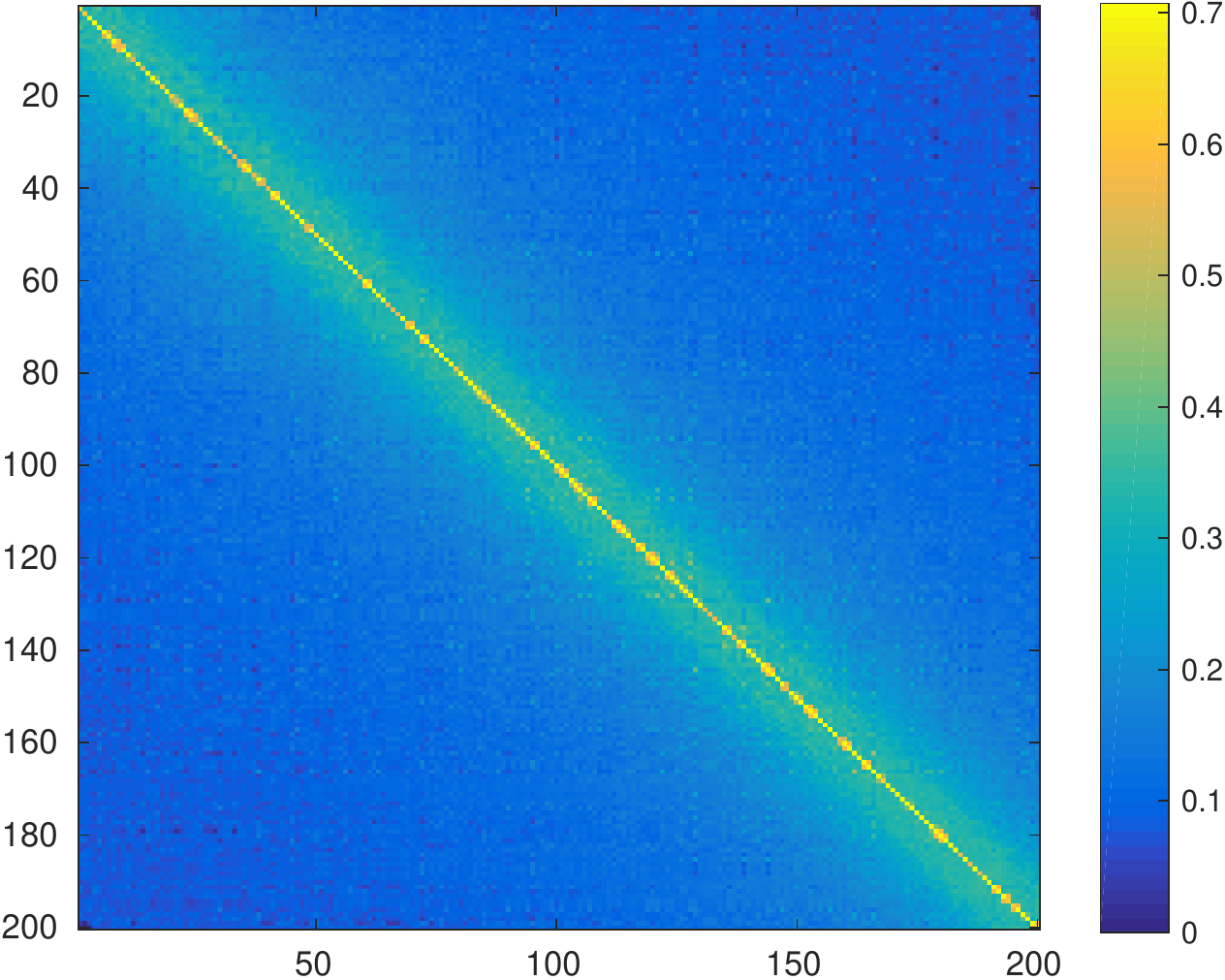}
			\caption{$S^{\text{out}}$ for $N/n=4$}\label{subfig:SerDuplDense_sz4}
		\end{subfigure}
		
		\caption{Original matrix $S$ (with parameter $\gamma=0.5$) from which the data $(A,c)$ is generated (\textsc{A}), output $S^{\text{out}}$ recovered from $(A,c)$ by Algorithm~\ref{alg:AltProjBase} (used with $\eta$-spectral) with $N/n=1.33$ (\textsc{B}) and with $N/n=4$ (\textsc{C}).
			The meanDist metric is 0.98 for $N/n=1.33$ (\textsc{B}) and 10.40 for $N/n=4$ (\textsc{C})}
		\label{fig:SerDuplDenseQualitResults}
	\end{center}
%	\vskip -0.2in
\end{figure}

\begin{figure}[p]
	%\vskip 0.2in
	\begin{center}
		
		\begin{subfigure}[htb]{0.3\textwidth}
			\includegraphics[width=\textwidth]{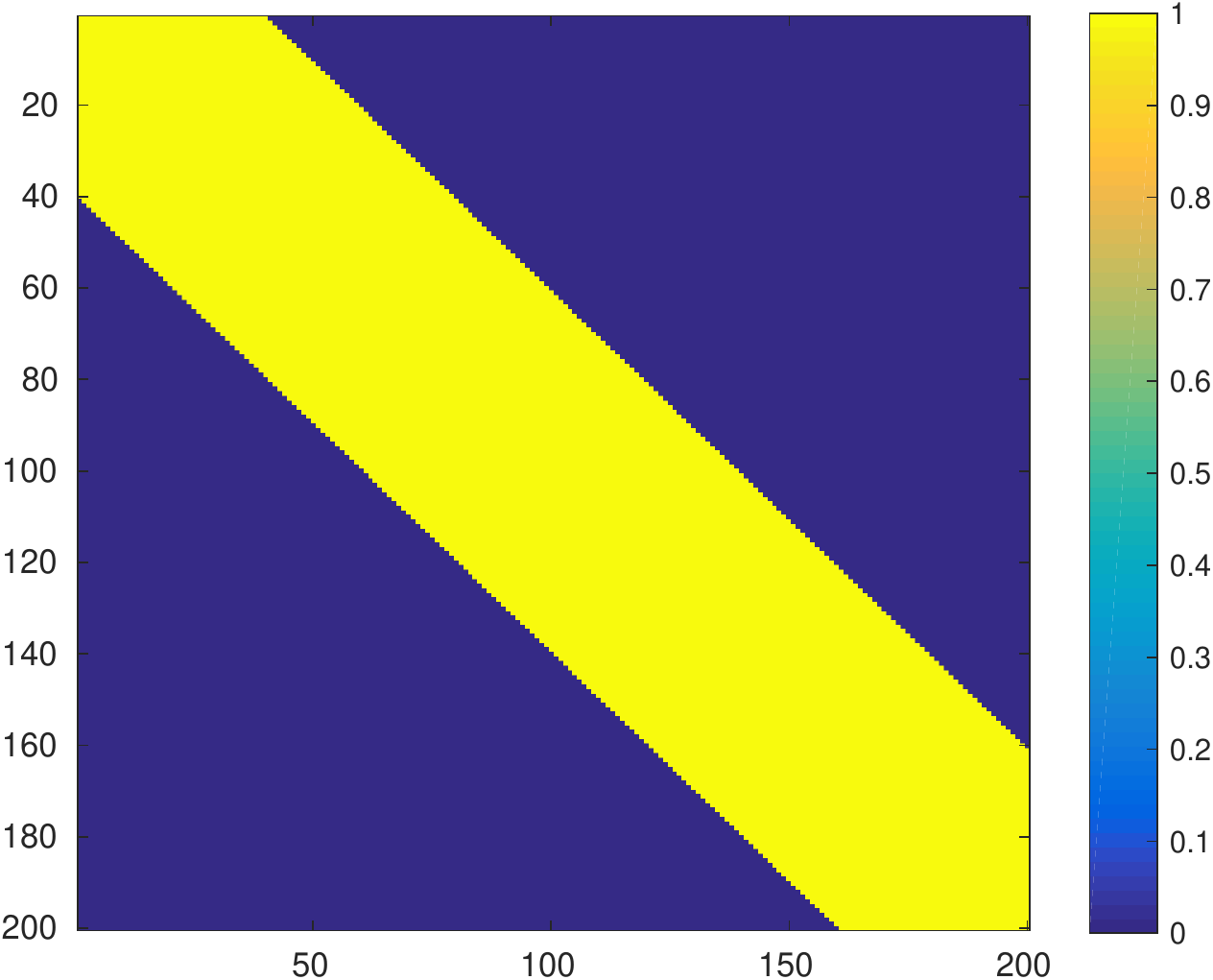}
			\caption{Ground truth $S$}\label{subfig:SerDuplSparseGroundTruth}
		\end{subfigure}
		\begin{subfigure}[htb]{0.3\textwidth}
			\includegraphics[width=\textwidth]{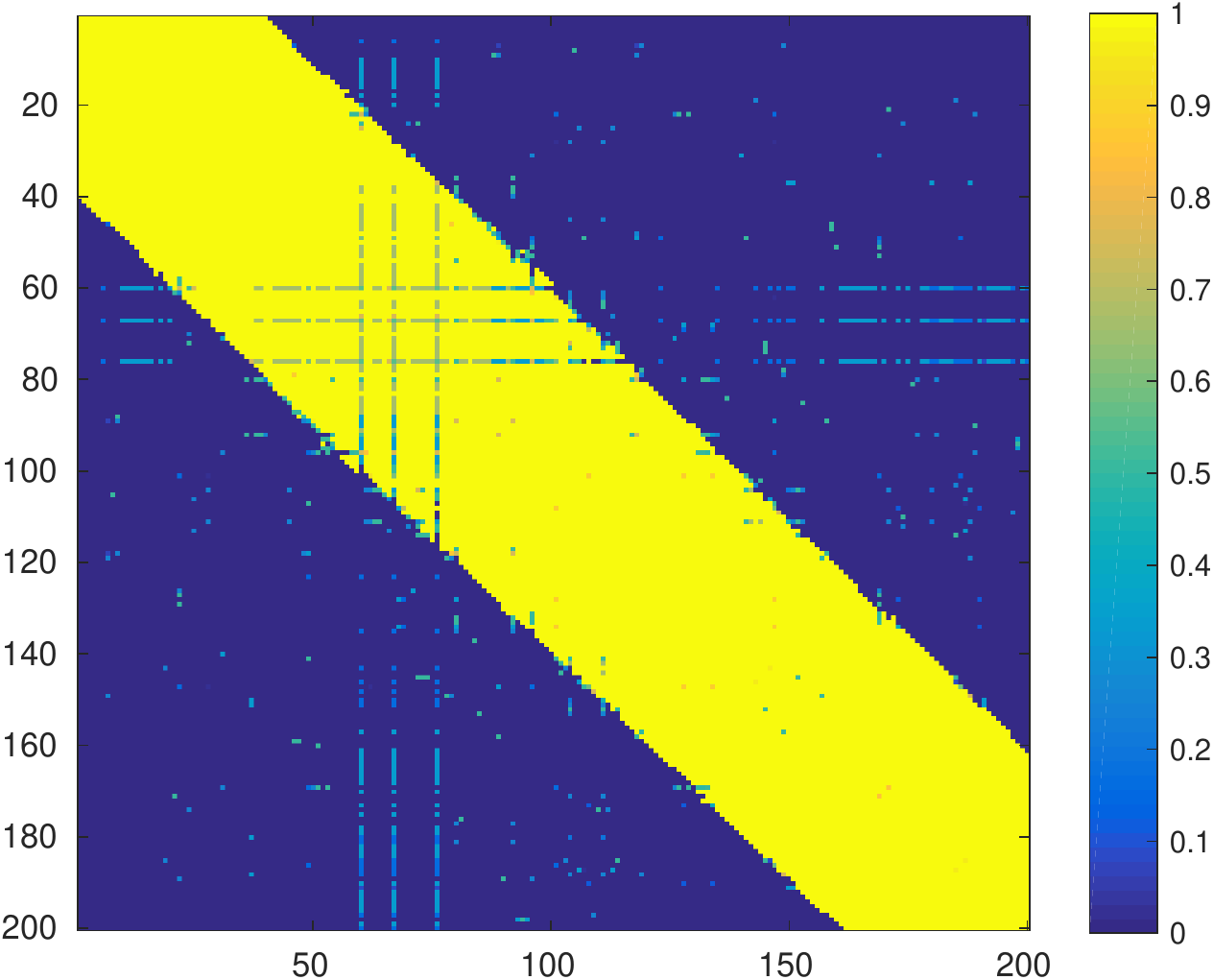}
			\caption{$S^{\text{out}}$ for $N/n=1.33$}\label{subfig:SerDuplSparse_sz1}
		\end{subfigure}
		\begin{subfigure}[htb]{0.3\textwidth}
			\includegraphics[width=\textwidth]{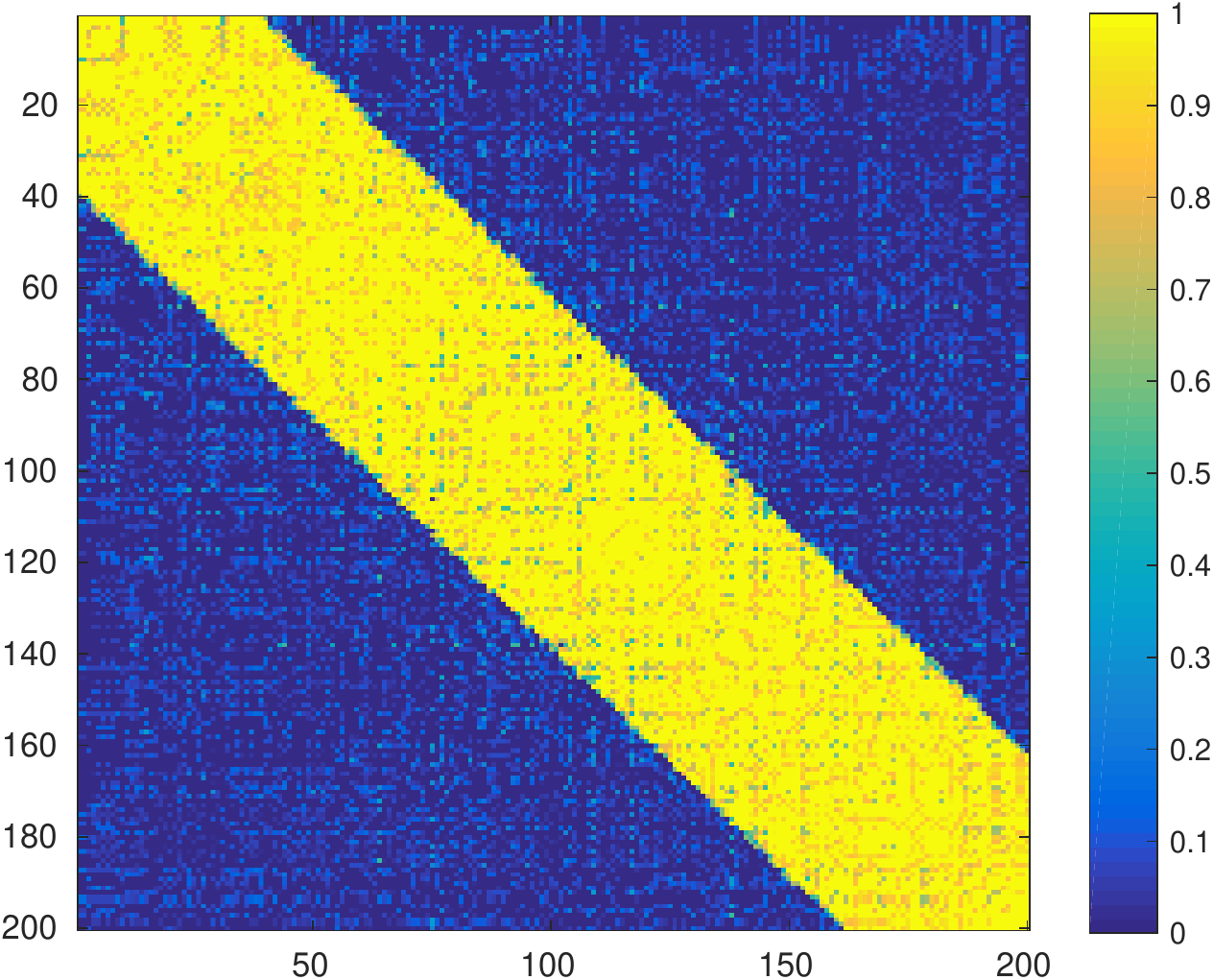}
			\caption{$S^{\text{out}}$ for $N/n=4$}\label{subfig:SerDuplSparse_sz4}
		\end{subfigure}
		
		\caption{Original matrix $S$ (with parameters $\delta=n/5$, $s=0$) from which the data $(A,c)$ is generated (\textsc{A}), output $S^{\text{out}}$ recovered from $(A,c)$ by Algorithm~\ref{alg:AltProjBase} (used with $\eta$-spectral) with $N/n=1.33$ (\textsc{B}) and with $N/n=4$ (\textsc{C}).
			The meanDist metric is 1.03 for $N/n=1.33$ (\textsc{B}) and 7.26 for $N/n=4$ (\textsc{C})}
		\label{fig:SerDuplSparseQualitResults}
	\end{center}
%	\vskip -0.2in
\end{figure}

\begin{figure}[p]
	%\vskip 0.2in
	\begin{center}
		
		\begin{subfigure}[htb]{0.3\textwidth}
			\includegraphics[width=\textwidth]{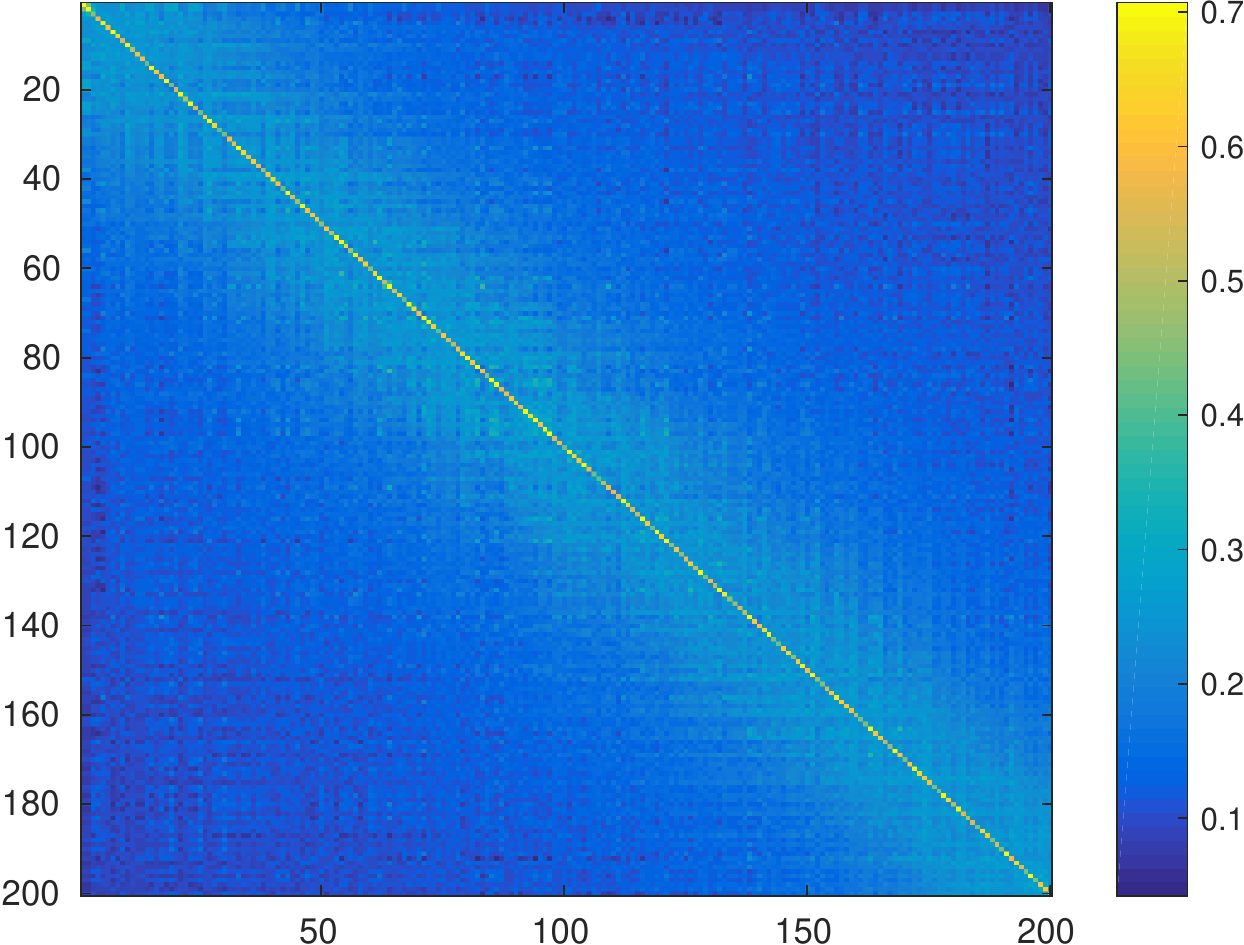}
			\caption{$\Pi_* S^{(t)} \Pi_*^T$ at step 3 of Alg~\ref{alg:AltProjBase}}\label{subfig:SerDenseBeforeRconsProj}
		\end{subfigure}
		\begin{subfigure}[htb]{0.3\textwidth}
			\includegraphics[width=\textwidth]{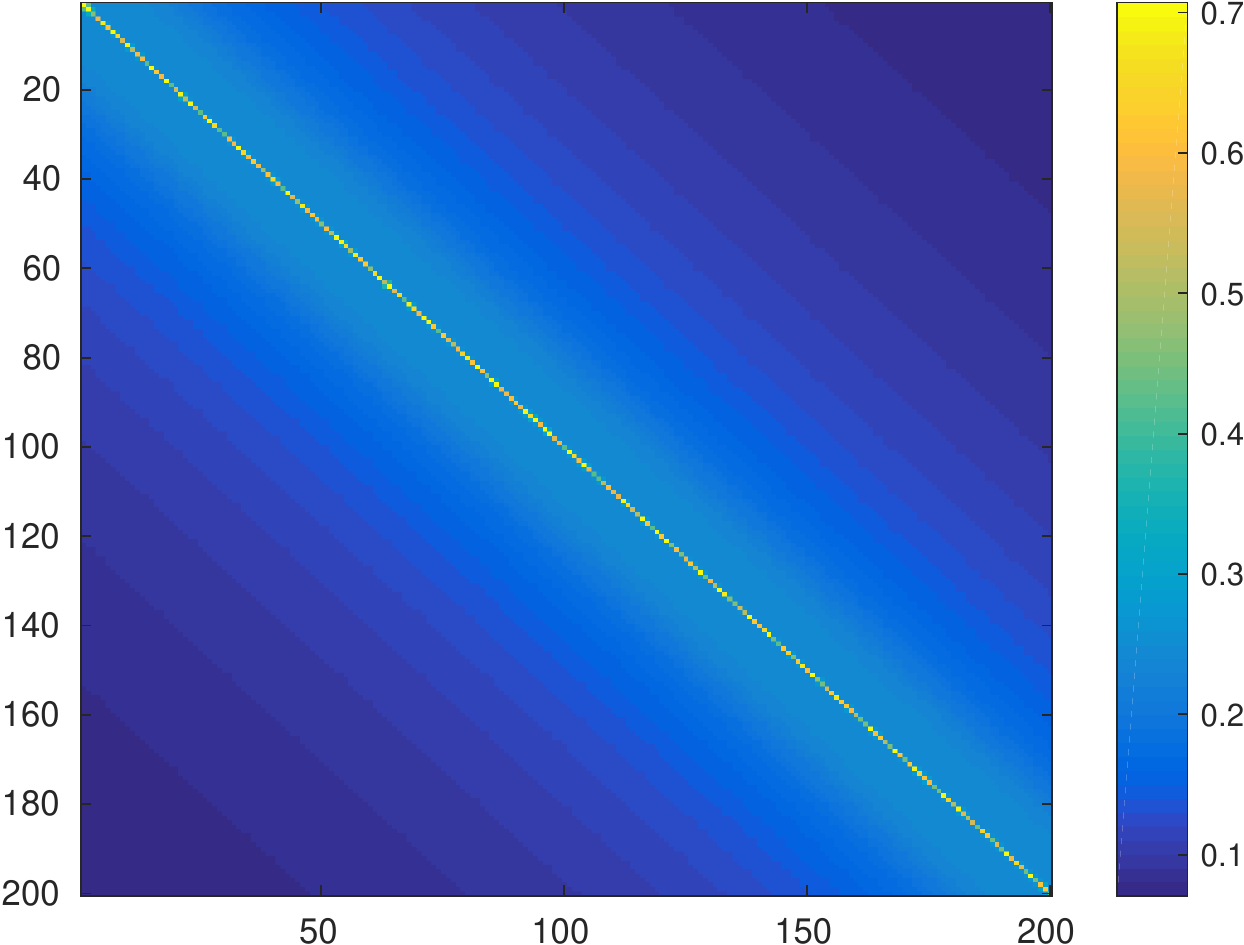}
			\caption{$S^{(t+ \frac{1}{2})}$ at step 3 of Alg~\ref{alg:AltProjBase}}\label{subfig:SerDenseAfterRconsProj}
		\end{subfigure}
		\begin{subfigure}[htb]{0.3\textwidth}
			\includegraphics[width=\textwidth]{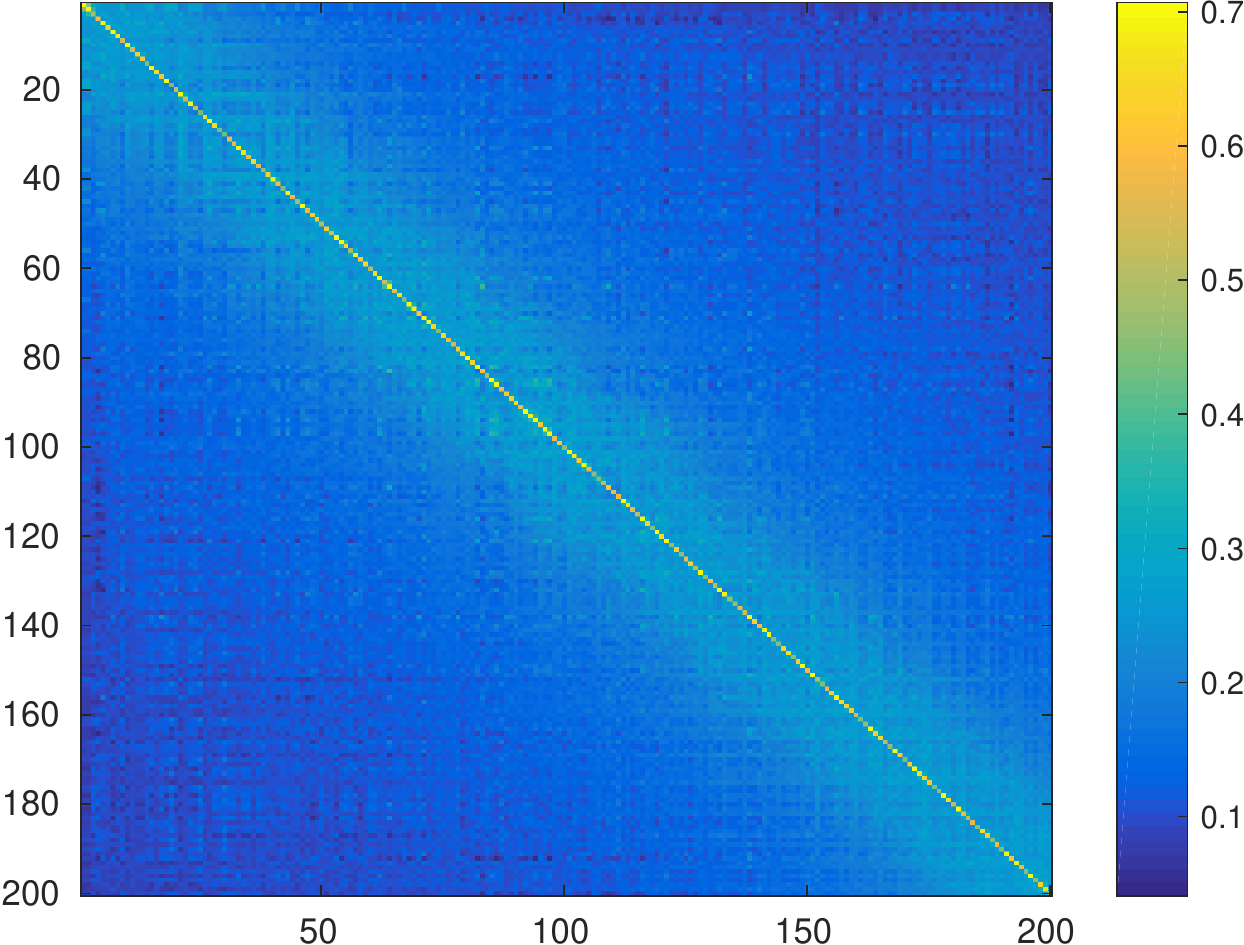}
			\caption{$S_A$ at step 4 of Alg~\ref{alg:AltProjBase}}\label{subfig:SerDenseAfterDuplconsProj}
		\end{subfigure}
		
		\caption{Three steps of Algorithm~\ref{alg:AltProjBase} for a dense matrix $S$ (with parameter $\gamma=0.5$, $n=200$, $N/n=4$).}
		\label{fig:SerDuplDenseThreeStepsOfAlgorithm}
	\end{center}
%	\vskip -0.2in
\end{figure}

\begin{figure}[p]
	%\vskip 0.2in
	\begin{center}
		
		\begin{subfigure}[htb]{0.3\textwidth}
			\includegraphics[width=\textwidth]{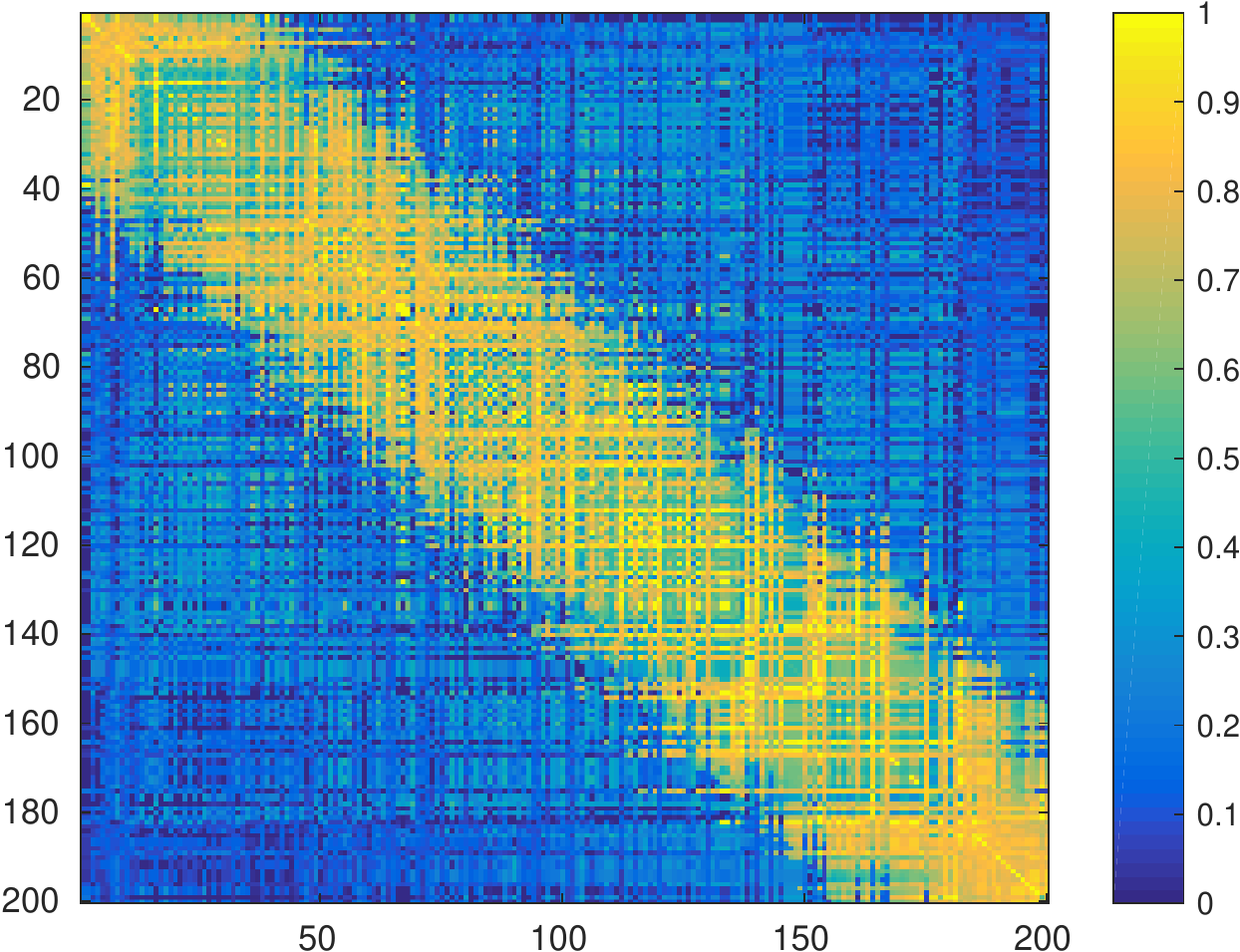}
			\caption{$\Pi_* S^{(t)} \Pi_*^T$ at step 3 of Alg~\ref{alg:AltProjBase}}\label{subfig:SerSparseBeforeRconsProj}
		\end{subfigure}
		\begin{subfigure}[htb]{0.3\textwidth}
			\includegraphics[width=\textwidth]{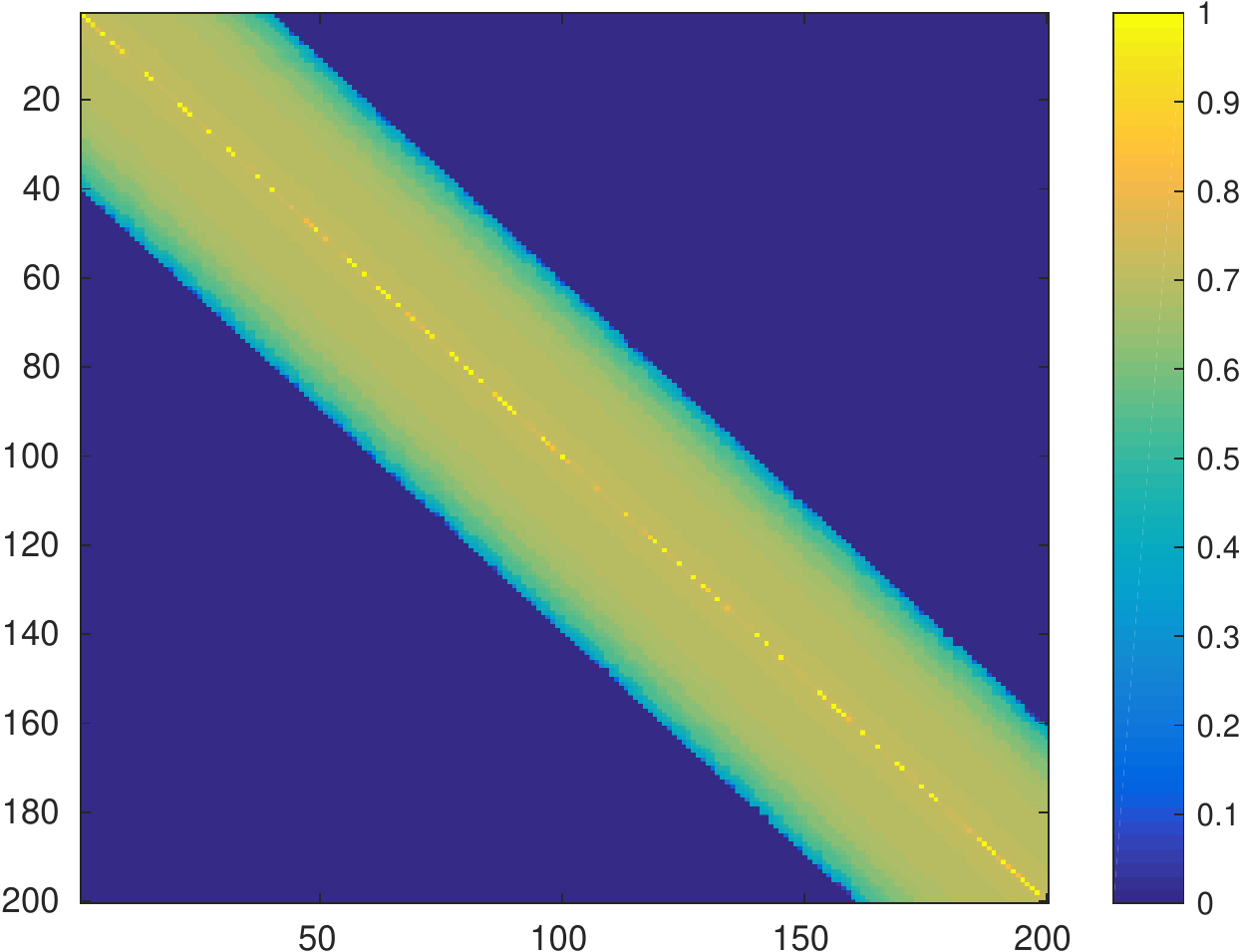}
			\caption{$S^{(t+ \frac{1}{2})}$ at step 3 of Alg~\ref{alg:AltProjBase}}\label{subfig:SerSparseAfterRconsProj}
		\end{subfigure}
		\begin{subfigure}[htb]{0.3\textwidth}
			\includegraphics[width=\textwidth]{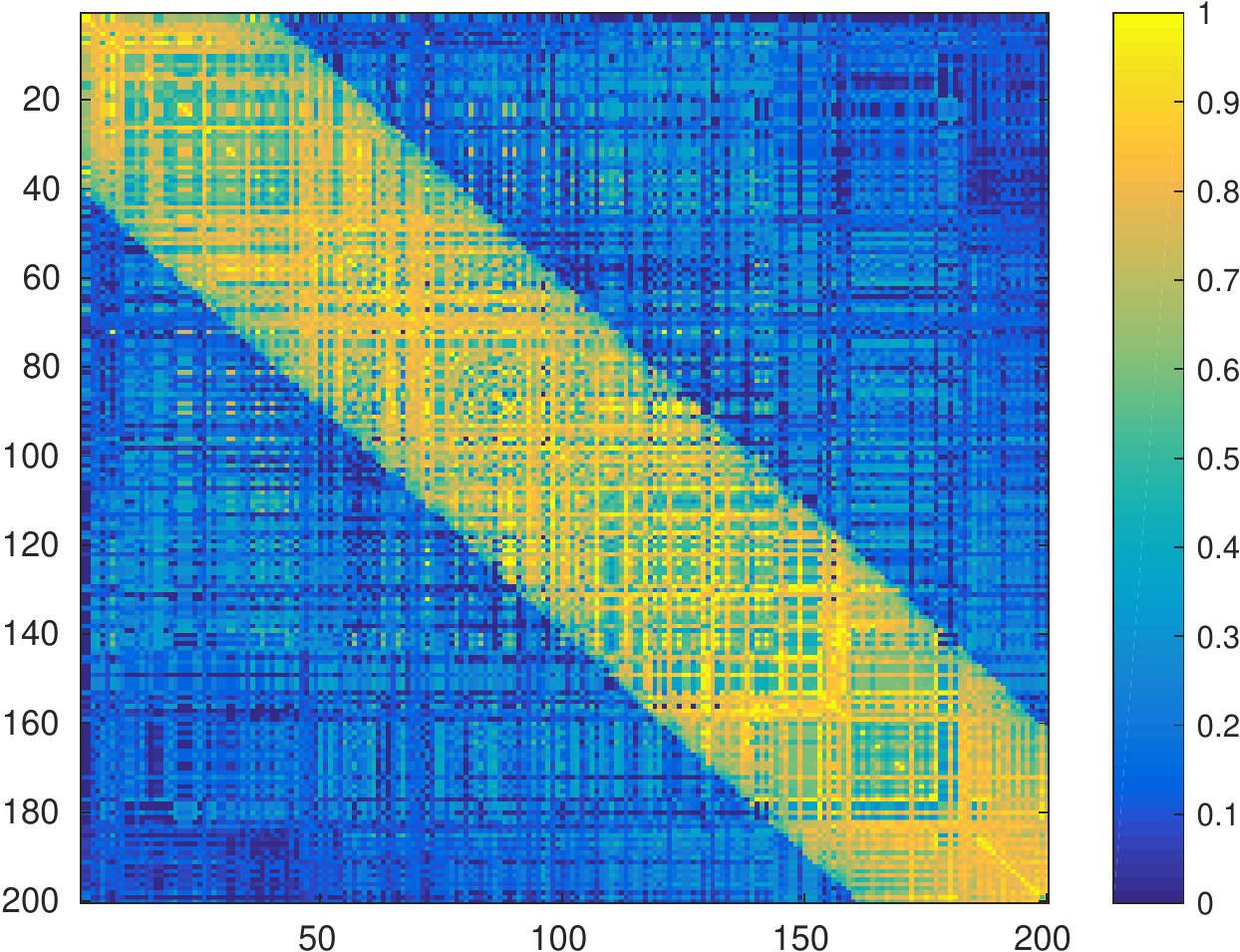}
			\caption{$S_A$ at step 4 of Alg~\ref{alg:AltProjBase}}\label{subfig:SerSparseAfterDuplconsProj}
		\end{subfigure}
		
		\caption{Three steps of Algorithm~\ref{alg:AltProjBase} for a sparse matrix $S$ with parameters $n=200$, $\delta=40$, $s=0$, $N/n=4$.}
		\label{fig:SerDuplDenseThreeStepsOfAlgorithm}
	\end{center}
%	\vskip -0.2in
\end{figure}

\begin{figure}[h]
	%\vskip 0.2in
	\begin{center}
		\centerline{\includegraphics[width=0.75\columnwidth]{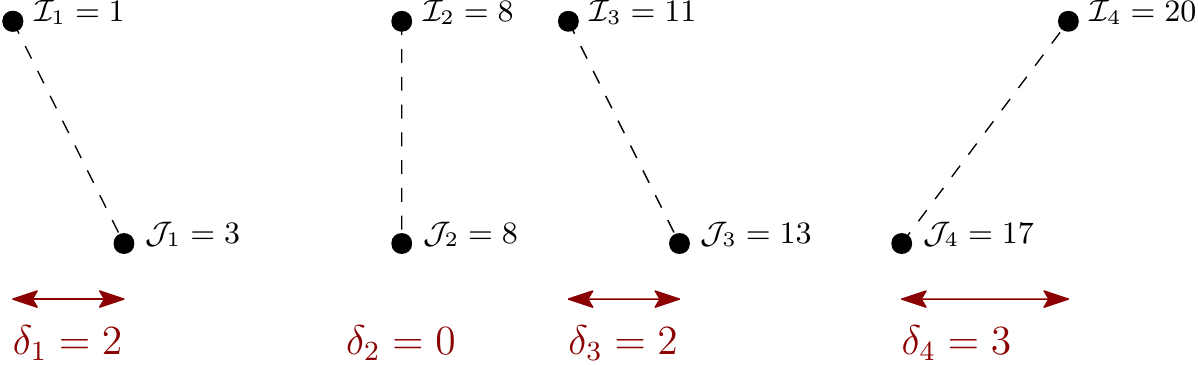}}
		\caption{Mean distance computation between two assignments $\mathcal{I}=\{1,8,11,20\}$ and $\mathcal{J}=\{3,8,13,17\}$ corresponding to the non-zeros in a given row $i$ of two assignment matrices $Z_1$ and $Z_2$.
		Before computing the $\delta_i$, a matching between $\mathcal{I}$ and $\mathcal{J}$ is performed.}
		\label{fig:matchingHungarian}
	\end{center}
	\vskip -0.2in
\end{figure}

\begin{figure}[h]
	%\vskip 0.2in
	\begin{center}
\begin{tabular}{cc}
		\includegraphics[width=0.4\textwidth]{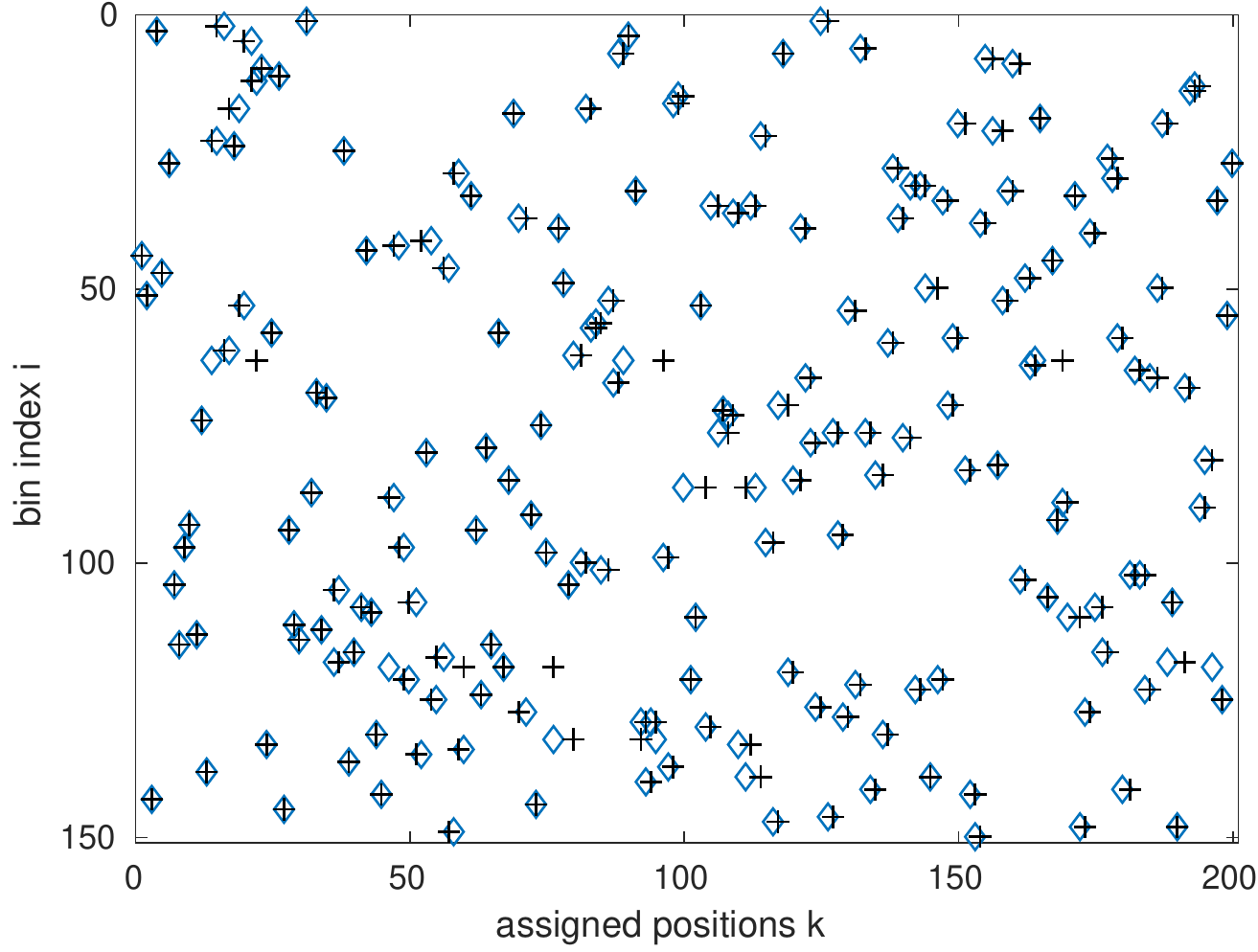}
		&
		\includegraphics[width=0.4\textwidth]{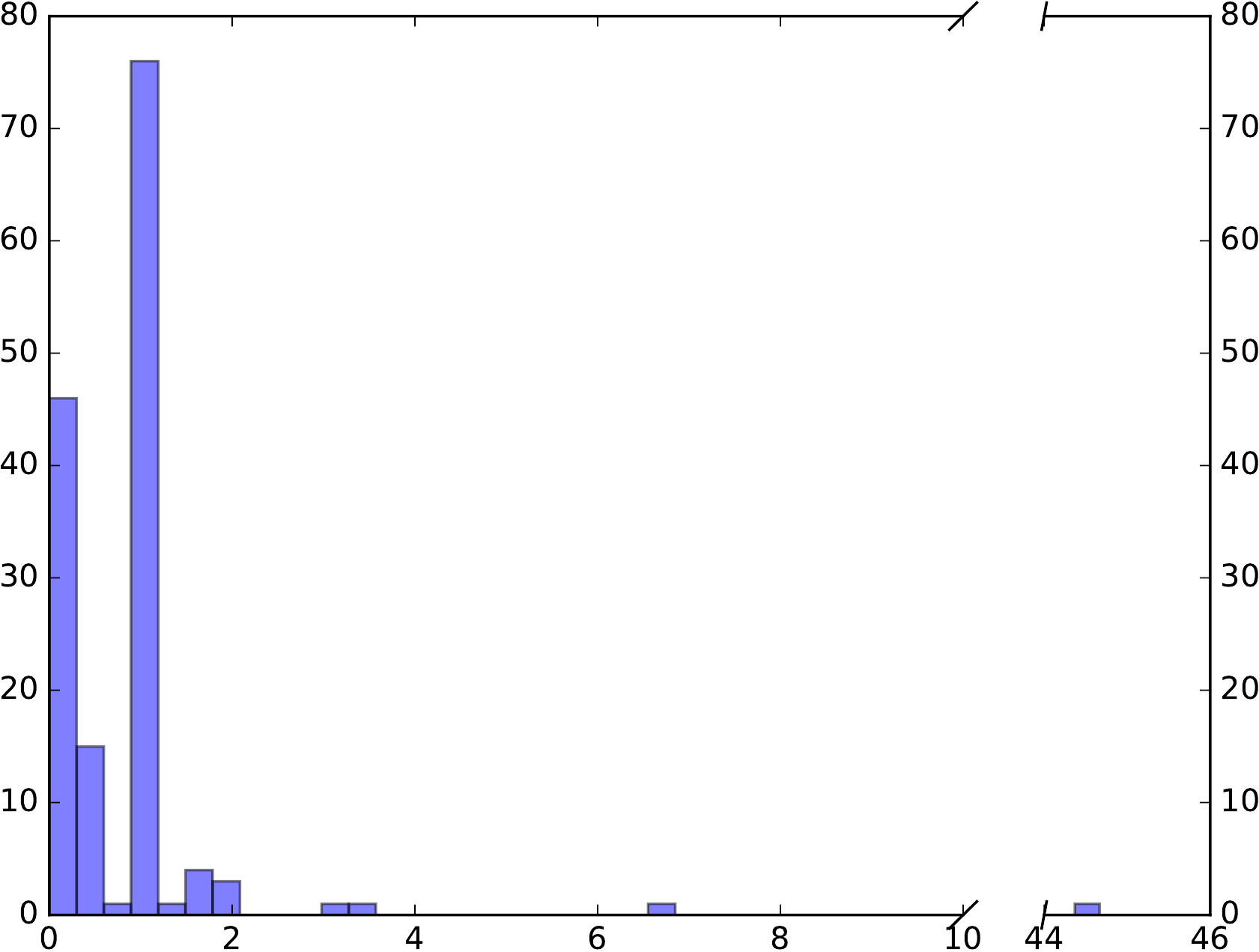}
\end{tabular}
		\caption{Plot of the true assignment matrix Z (blue diamonds) vs the one obtained with Algorithm~\ref{alg:AltProjBase} (black crosses) for an experiment with a sparse matrix $S$ with $n=200$, $\delta=n/5$. For each row, we compute the mean distance between the non-zero represented by the blue diamonds and the black crosses, as illustrated in Figure~\ref{fig:matchingHungarian}. The average over all rows of this mean distance is of 1.03 here.	{\em Left:} assignment matrices. {\em Right:} Histogram of the mean distance between the matched non-zero locations (distance between black crosses and associated blue diamond), among the rows $i$ of the two assignment matrices plotted on the left.\label{fig:ZvsZtrue}}
	\end{center}
	\vskip -0.2in
\end{figure}

\subsection{{\bf E. coli} experiment}
We expand the discussion about the results of $\eta$-Spectral on data from {\it E. coli} DNA reads.
Although we observe a few isolated outiers in Figure~\ref{fig:EcoliOrdVsTrue}, we believe their presence would almost not change the consensus sequence derived from the layout in a full OLC pipeline. Moreover, remark that the ground truth position obtained by mapping the reads to a reference genome is not error-proof (especially in repeated regions). 

While Subfigure~\ref{subfig:EcoliReorder1} strictly shows the ordering  found by the $\eta$-Spectral algorithm versus the one obtained by mapping the reads to a reference genome with BWA, Figure~\ref{fig:EcoliOrdVsTrue} includes two operations on the ordering to make the plot easier to read.
First, note that this  bacterial genome is not linear but circular, thus the reference ordering is defined up to a shift. In Figure~\ref{fig:EcoliOrdVsTrue}, we chose the shift so as to match the ordering found by $\eta$-Spectral as much as possible, in order to visualize more easily whether it resembles a straight line. Specifically, we replaced the  vector $(\pi_1, \pi_2, \ldots, \pi_{n-1}, \pi_n)$ with $(\pi_k, \pi_{k+1}, \ldots, \pi_{n-1}, \pi_n, \pi_1, \pi_2, \ldots, \pi_{k-1})$, with $k$ chosen as the breakpoint appearing on Figure~\ref{subfig:EcoliReorder1}.
Then, we flipped the permutation in order to observe a line of slope +1, \ie~we applied a flip $\pi_i \gets n+1 - \pi_i$, which is an operation that leaves the objective invariant as noted in \S~\ref{sssec:sym-issue}.

Finally, subfigure~\ref{subfig:EcoliWrongContig} zooms in a part of Subfigure~\ref{fig:EcoliOrdVsTrue} where the ordering is wrong. However, the ordering is not random in this zone, it is rather ``upside-down''. Therefore, in the ``upside-down'' part, the local ordering is correct (recall that the ordering is defined up to a shift).
It is likely that the elements in the ``upside-down'' part are weakly connected to the rest of the reads (meaning that they only overlap with some other reads at their very edges), resulting in few inconsistencies in the global ordering when this subset is flipped.

\begin{figure}[H]
	%\vskip 0.2in
	\begin{center}
		
		\begin{subfigure}[htb]{0.4\textwidth}
			\includegraphics[width=\textwidth]{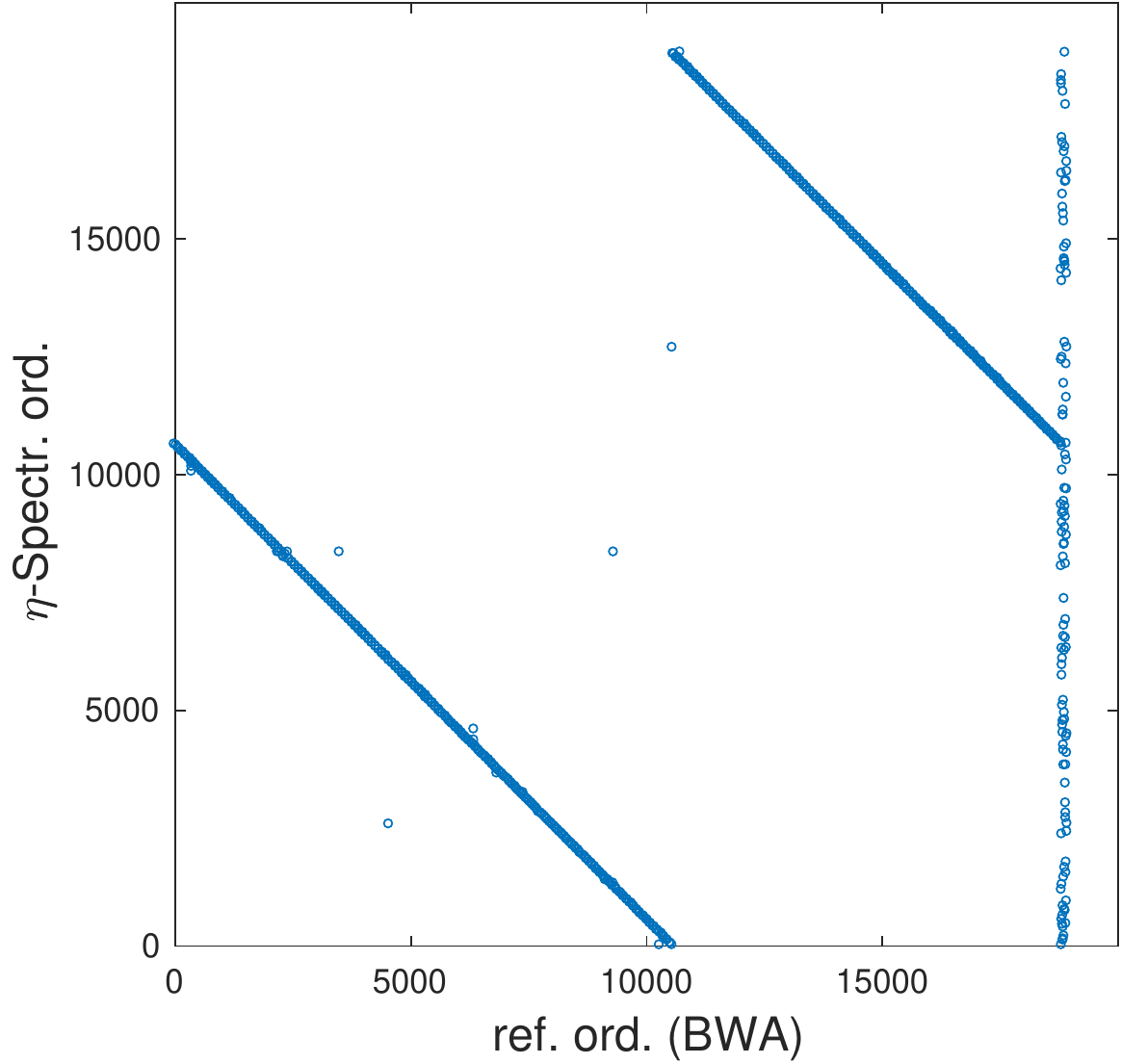}
			\caption{Ordering found by $\eta$-Spectral vs ref.}\label{subfig:EcoliReorder1}
		\end{subfigure}
\qquad
		\begin{subfigure}[htb]{0.35\textwidth}
			\includegraphics[width=\textwidth]{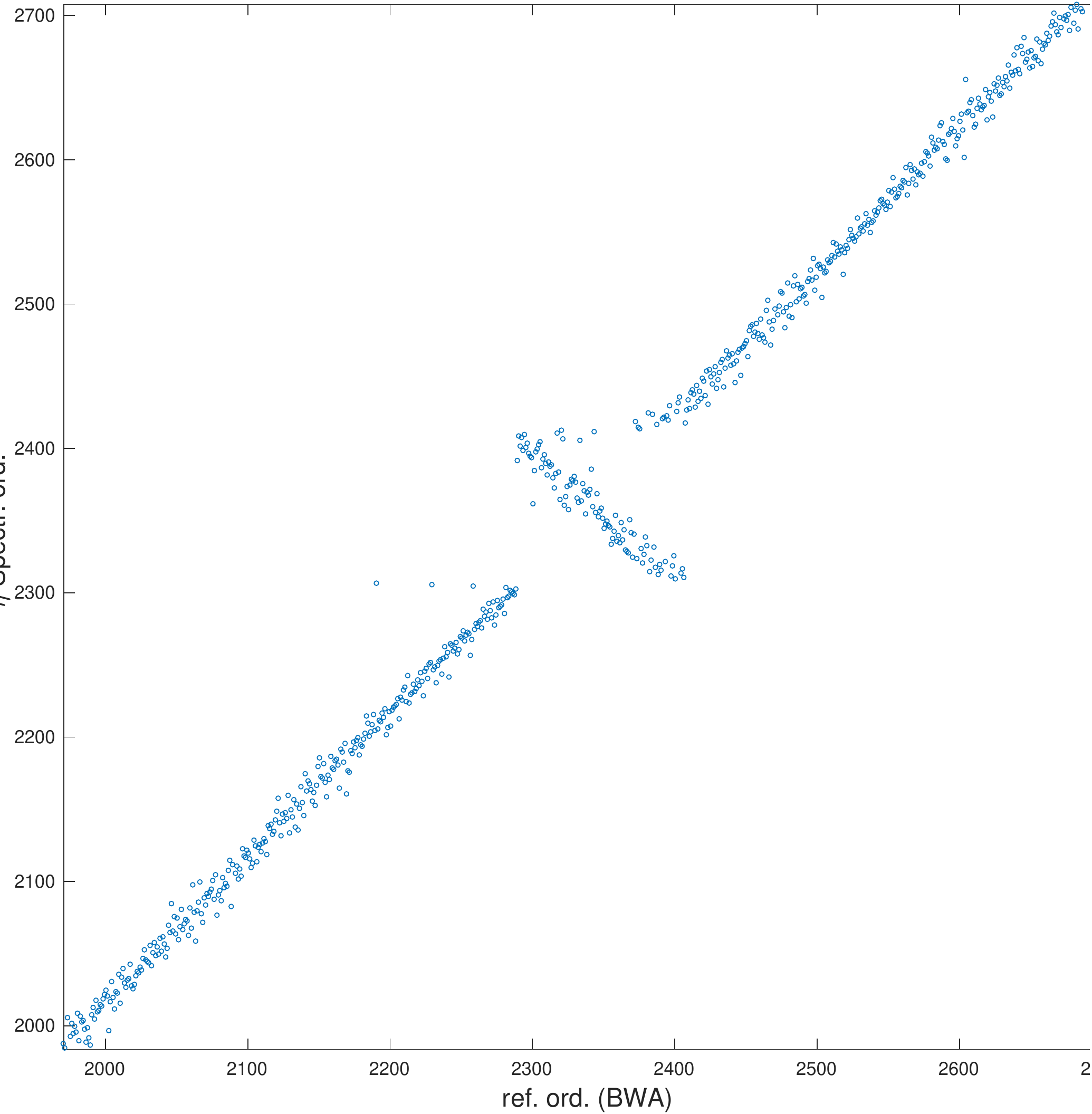}
			\caption{Zoom in wrongly reordered part}\label{subfig:EcoliWrongContig}
		\end{subfigure}

		\caption{Ordering found by the $\eta$-Spectral algorithm versus the one obtained by mapping the reads to a linearized reference genome (left), and a zoom in a wrongly reordered part (right).
		In \ref{subfig:EcoliReorder1}, we do not observe a straight line because the genome is circular, therefore a permutation is ``correct'' up to a shift. The points scattered in the top are reads that could not be mapped to the reference genome.}
	\end{center}
	\vskip -0.2in
\end{figure}

\subsection{Projecting on binary strong-R matrices.}
\begin{lemma}\label{lemm:proj-strongR-binary-is-binary}
	Given a binary symmetric matrix $S \in \{0,1\}^{n \times n}$, it has a binary projection in $\ell_1$ norm onto the set of strong-R-matrices, that is to say, there exists a solution $R \in \{0,1\}^{n \times n}$ to the following problem,
	\begin{align}\tag{R-proj}
		\BA{ll}
		\mbox{minimize} & \sum_{i,j=1}^{n} | R_{ij} - S_{ij} | \\
		\st & R \in \cR.
		\EA
	\end{align}
\end{lemma}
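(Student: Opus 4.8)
The plan is to sidestep the explicit structure of strong-R matrices entirely and instead use a \emph{threshold (layer-cake) decomposition}, which is tailored to the $\ell_1$ objective and to the fact that $\cR$ is defined purely by \emph{entrywise comparison} constraints $R_{ij}\le R_{kl}$. The single structural fact I would isolate is that $\cR$ is closed under thresholding: for $R\in\cR$ and a level $\theta$, the binary matrix $R^{(\theta)}$ with entries $R^{(\theta)}_{ij}=1$ if $R_{ij}\ge\theta$ and $0$ otherwise is again in $\cR$. This is immediate because the map $x\mapsto\mathbf{1}\{x\ge\theta\}$ is non-decreasing, so every constraint $R_{ij}\le R_{kl}$ (for $|i-j|\ge|k-l|$ as in Definition~\ref{def:strong-R-mat}) is preserved, as is symmetry.

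First I would reduce to candidates with entries in $[0,1]$: since $S\in\{0,1\}^{n\times n}$, clamping any $R$ entrywise into $[0,1]$ neither increases $\sum_{ij}|R_{ij}-S_{ij}|$ nor destroys membership in $\cR$ (clamping is a uniformly applied entrywise non-decreasing map). The set $\cR\cap[0,1]^{n\times n}$ is compact and the objective is continuous, so a minimizer $R^\star$ exists; write $V=\|R^\star-S\|_1$ for the optimum over $\cR$ and $V_{\mathrm{bin}}$ for the optimum over $\cR\cap\{0,1\}^{n\times n}$, so that $V\le V_{\mathrm{bin}}$ since binary strong-R matrices form a subset.

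The core step is the elementary identity, valid for $a,b\in[0,1]$,
\[
|a-b| = \int_0^1 \left| \mathbf{1}\{a\ge\theta\} - \mathbf{1}\{b\ge\theta\} \right|\, d\theta ,
\]
since the integrand equals $1$ exactly for $\theta$ lying between $a$ and $b$. Summing over entries, exchanging the finite sum with the integral, and using that $S$ is binary (so $\mathbf{1}\{S_{ij}\ge\theta\}=S_{ij}$ for $\theta\in(0,1]$) gives
\[
V = \|R^\star-S\|_1 = \int_0^1 \big\| (R^\star)^{(\theta)} - S \big\|_1\, d\theta .
\]
For each $\theta$ the matrix $(R^\star)^{(\theta)}$ is binary and lies in $\cR$, hence $\|(R^\star)^{(\theta)}-S\|_1\ge V_{\mathrm{bin}}$, so $V\ge\int_0^1 V_{\mathrm{bin}}\,d\theta = V_{\mathrm{bin}}\ge V$. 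Equality throughout forces $\|(R^\star)^{(\theta)}-S\|_1=V$ for almost every $\theta\in(0,1]$, and any such $\theta$ yields a binary strong-R matrix $R=(R^\star)^{(\theta)}$ attaining the minimum, which proves the claim.

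The routine verifications (clamping, compactness, and the sum/integral exchange) are straightforward, so I do not expect a genuine obstacle. The one point that deserves care, and where the argument really lives, is the closure of $\cR$ under thresholding: this is exactly the place where the form of Definition~\ref{def:strong-R-mat} as comparison inequalities between entries — rather than a bound on entry magnitudes — is essential. As a remark, the same argument shows more generally that the $\ell_1$ projection onto $\cR$ of any $[0,1]$-valued matrix can be taken with values in the input's range; the binary case is simply the instance needed here.
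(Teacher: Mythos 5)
Your proof is correct, and it takes a genuinely different route from the one in the paper. The paper argues structurally: it first characterizes an optimal $R$ as placing each entry at either the maximum $M_k$ or the minimum $m_k$ of its diagonal according to whether $S_{ij}$ is $1$ or $0$, rewrites the objective as a linear function of the profile $\left(M_0,m_0,\ldots,m_{n-1}\right)$ subject to the chain constraints $1\geq M_0\geq m_0\geq m_1\geq\cdots\geq 0$, and then invokes total unimodularity of the resulting linear program to conclude that an integral, hence binary, optimum exists. Your layer-cake argument replaces all of this with the single observation that $\cR$ is closed under entrywise non-decreasing maps (so thresholding and clamping stay inside $\cR$), combined with the identity $|a-b|=\int_0^1\bigl|\mathbf{1}\{a\geq\theta\}-\mathbf{1}\{b\geq\theta\}\bigr|\,d\theta$ for $a,b\in[0,1]$; averaging the binary objective values over thresholds and sandwiching against $V_{\mathrm{bin}}$ then forces some threshold to achieve the optimum. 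What your approach buys is brevity and generality: it avoids both the structural reduction (which the paper asserts rather than derives in detail) and the appeal to LP integrality, and it works verbatim for any constraint set cut out by entrywise comparison inequalities and any binary target. What the paper's approach buys is an explicit description of the optimal banded form, which it reuses elsewhere (e.g., in the proof of Proposition~\ref{prop:RSvsR2SUM}). One caveat on your closing remark: the extension to non-binary $S$ with values in $[0,1]$ is not immediate from the same computation, since the thresholded targets $S^{(\theta)}$ then vary with $\theta$ and reassembling per-level minimizers into a single matrix with values in the range of $S$ needs an additional argument; this remark is inessential to the lemma, which your proof establishes.
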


\begin{proof}
	Consider a given diagonal $0 \leq k \leq n-1$ in the lower triangle.
	The strong-R constraints are lower and upper bounds on the values of $R_{ij}$ on the $k$-th diagonal. Let $m_k \triangleq \min_{i,j \: : \: |i-j|=k} R_{ij}$, and $M_k \triangleq \max_{i,j \: : \: |i-j|=k} R_{ij}$.
	Recall that $S$ has only ones and zeros on the $k$-th diagonal.
	From \ref{eqn:R-proj}, $R_{ij}$ has values in $[0,1]$.
	Clearly, a solution of \ref{eqn:R-proj} satisfies,
	\begin{align*}
		R_{ij}=
		\begin{cases}
		M_{|i-j|}, & \text{if}\ S_{ij}=1 \\
		m_{|i-j|}, & \text{if}\ S_{ij}=0.
		\end{cases}
	\end{align*}
	Let $0 \leq p_k \leq n-k$ denote the number of ones on the $k$-th diagonal of $S$, and $0 \leq z_k = n-k-p_k$ the number of zeros on the $k$-th diagonal of $S$.
	Summing over all the diagonals of the matrix, the objective in \ref{eqn:R-proj} can be written as,
	\begin{align}\label{eqn:Rproj-binary-obj}
		\BA{ll}
			\|S-R\|_1 = &
			p_0 \left(1 - M_0\right) + z_0 \left( m_0 - 0 \right) + 2 \sum_{k=1}^{n-1} p_k \left(1 - M_k\right) + z_k \left( m_k - 0 \right)
		\EA
	\end{align}
	where we have separated the main diagonal from the others that are coupled with their symmetric.
	Now, we have that $0 \leq m_k \leq M_k \leq 1$ for all $0\leq k \leq n-1$.
	The strong-R constraints also require that $M_{k} \leq m_{k-1}$ for $1\leq k \leq n-1$.
	The minimizer of \ref{eqn:R-proj} saturates these constraints ($M_{k} = m_{k-1}$), and equation~\eqref{eqn:Rproj-binary-obj} can finally be written as,
	\begin{align*}
		\BA{ll}
			\|S-R\|_1 & =
			p_0 \left( 1 - M_0 \right) + z_0 m_0  + 2 \sum_{k=1}^{n-1} p_k \left( 1 - m_{k-1} \right) + z_k  m_k\\
			& = p_0 \left( 1 - M_0 \right) +  \left(z_0 - 2 p_1 \right) m_0 + 2 \sum_{k=1}^{n-1} \left(z_k - p_{k+1}\right) m_k + \sum_{k=1}^{n-1} p_k.
		\EA
	\end{align*}
	where by convention $p_n \triangleq 0$.
	\ref{eqn:R-proj} seeks to minimize this objective on the variables $\left(M_0,m_0,m_1,\ldots,m_{n-1}\right)$, under the constraints $1\geq M_0\geq m_0$, $m_{k-1} \geq m_k$ for $1 \leq k  \leq n-1$, and $m_k \geq 0$ for $0 \leq k  \leq n-1$.
	All in all, we can rewrite \ref{eqn:R-proj} as a linear program over the variable $x=\left(M_0,m_0,m_1,\ldots,m_{n-1}\right) \in \reals^{n+1}$,
	\begin{align*}
	\BA{ll}
	\mbox{minimize} & c^T x \\
	\st & A x \leq b\:,\:\: x \geq 0.
	\EA
	\end{align*}
	where 	
	\begin{align*}
		A  = \left(
		\begin{array}{ccccc}
		-1  & & & & \\
		1    &  -1   &    &   &   \\
		&  1   &  -1   &    &     \\
		&     &  \ddots   &  \ddots  &     \\
		&  &  &  1  & -1 
		\end{array}\right),
		\: \: \:
		b = \left(
		\begin{array}{c}
		 -1\\
		 0\\
		 \vdots\\
		 0
	 	\end{array}\right),
	 	\: \: \:
	 	c  =  \left(
	 	\begin{array}{c}
	 	-p0\\
	 	(z_0-2p_1)\\
	 	2(z_1-p_2)\\
	 	\vdots\\
	 	2(z_{n-1}-p_{n})
	 	\end{array}\right).
	\end{align*}
	Now, observe that $b \in \reals^{n+1}$ has integer entries, and that $A \in \reals^{(n+1)\times (n+1)}$ is totally unimodular. It follows that it has an integral solution $x^*$ \citep{papadimitriou1998combinatorial}[Th.\,13.3]. From the previous considerations, the corresponding matrix $R \in \cR$ has entries in $\{0,1\}$.
	\end{proof}

\subsection{Supplementary Tables}
\subsubsection{Robust Seriation}
Tables~\ref{tb:KT-vs-s-delta01n} and \ref{tb:KT-vs-s-delta005n} display the Kendall-$\tau$ correlation between the ordering found and the ground truth for different values of $s/s_{\text{lim}}$ and of $n$, with $\delta=n/10$ and $\delta=n/20$ respectively. For given values of $\delta /n$ and $s/s_{\text{lim}}$, the problem is easier (\ie, the methods perform better) when $n$ increases.
\begin{table}[t]
	\caption{Kendall-$\tau$ score for different values of $s/s_{\text{lim}}$, for the same methods as in Table~\ref{tb:rob-ser-synth}, for different values of $n$ (namely, $100 \:,\: 200 \:,\: 500$), and $\delta=n/10$ (namely, $10 \:,\: 20 \:,\: 50$).}
	\label{tb:KT-vs-s-delta01n}
	%\vskip 0.15in
	\begin{center}
		\begin{small}
			\begin{sc}
				\begin{tabular}{lccccccr}
					\toprule
					& & $s/s_{\text{lim}}=0.5$ & $s/s_{\text{lim}}=1$ & $s/s_{\text{lim}}=2.5$ & $s/s_{\text{lim}}=5$ & $s/s_{\text{lim}}=7.5$ & $s/s_{\text{lim}}=10$ \\
					\midrule
					\multirow{6}*{\parbox{1.3cm}{$n=100$}} & 
					spectral  & 0.91 {$\scriptstyle \pm 0.08 $} & 0.83 {$\scriptstyle \pm 0.13 $} & 0.72 {$\scriptstyle \pm 0.19 $} & 0.62 {$\scriptstyle \pm 0.21 $} & 0.55 {$\scriptstyle \pm 0.20 $} & 0.48 {$\scriptstyle \pm 0.21 $}\\
					& GnCR  & 0.92 {$\scriptstyle \pm 0.13 $} & 0.82 {$\scriptstyle \pm 0.23 $} & 0.70 {$\scriptstyle \pm 0.26 $} & 0.62 {$\scriptstyle \pm 0.26 $} & 0.55 {$\scriptstyle \pm 0.25 $} & 0.48 {$\scriptstyle \pm 0.24 $}\\
					& FAQ  & 0.93 {$\scriptstyle \pm 0.09 $} & 0.85 {$\scriptstyle \pm 0.17 $} & 0.72 {$\scriptstyle \pm 0.24 $} & 0.61 {$\scriptstyle \pm 0.25 $} & 0.55 {$\scriptstyle \pm 0.25 $} & 0.48 {$\scriptstyle \pm 0.23 $}\\
					& LWCD  & 0.93 {$\scriptstyle \pm 0.10 $} & 0.85 {$\scriptstyle \pm 0.17 $} & 0.72 {$\scriptstyle \pm 0.24 $} & 0.61 {$\scriptstyle \pm 0.25 $} & 0.55 {$\scriptstyle \pm 0.25 $} & 0.48 {$\scriptstyle \pm 0.23 $}\\
					& UBI  & 0.92 {$\scriptstyle \pm 0.09 $} & 0.85 {$\scriptstyle \pm 0.16 $} & 0.73 {$\scriptstyle \pm 0.24 $} & 0.62 {$\scriptstyle \pm 0.24 $} & 0.56 {$\scriptstyle \pm 0.24 $} & 0.49 {$\scriptstyle \pm 0.23 $}\\
					& Manopt  & 0.92 {$\scriptstyle \pm 0.08 $} & 0.84 {$\scriptstyle \pm 0.13 $} & 0.72 {$\scriptstyle \pm 0.19 $} & 0.62 {$\scriptstyle \pm 0.21 $} & 0.55 {$\scriptstyle \pm 0.20 $} & 0.48 {$\scriptstyle \pm 0.21 $}\\
					\midrule
					\multirow{6}*{\parbox{1.3cm}{$n=100$}} & 
					$\eta$-Spectr.  & 0.99 {$\scriptstyle \pm 0.00 $} & 0.98 {$\scriptstyle \pm 0.00 $} & 0.89 {$\scriptstyle \pm 0.17 $} & 0.74 {$\scriptstyle \pm 0.25 $} & 0.65 {$\scriptstyle \pm 0.26 $} & 0.56 {$\scriptstyle \pm 0.26 $}\\
					& HGnCR  & 0.98 {$\scriptstyle \pm 0.06 $} & 0.96 {$\scriptstyle \pm 0.14 $} & 0.80 {$\scriptstyle \pm 0.25 $} & 0.65 {$\scriptstyle \pm 0.30 $} & 0.54 {$\scriptstyle \pm 0.29 $} & 0.49 {$\scriptstyle \pm 0.29 $}\\
					& H-FAQ  & 0.97 {$\scriptstyle \pm 0.09 $} & 0.90 {$\scriptstyle \pm 0.16 $} & 0.80 {$\scriptstyle \pm 0.25 $} & 0.70 {$\scriptstyle \pm 0.29 $} & 0.64 {$\scriptstyle \pm 0.28 $} & 0.55 {$\scriptstyle \pm 0.26 $}\\
					& H-LWCD  & 0.97 {$\scriptstyle \pm 0.09 $} & 0.90 {$\scriptstyle \pm 0.16 $} & 0.80 {$\scriptstyle \pm 0.25 $} & 0.70 {$\scriptstyle \pm 0.29 $} & 0.65 {$\scriptstyle \pm 0.28 $} & 0.55 {$\scriptstyle \pm 0.28 $}\\
					& H-UBI  & 0.99 {$\scriptstyle \pm 0.00 $} & 0.98 {$\scriptstyle \pm 0.04 $} & 0.88 {$\scriptstyle \pm 0.20 $} & 0.75 {$\scriptstyle \pm 0.25 $} & 0.62 {$\scriptstyle \pm 0.26 $} & 0.54 {$\scriptstyle \pm 0.25 $}\\
					& H-Manopt  & 0.98 {$\scriptstyle \pm 0.05 $} & 0.91 {$\scriptstyle \pm 0.14 $} & 0.78 {$\scriptstyle \pm 0.23 $} & 0.65 {$\scriptstyle \pm 0.24 $} & 0.56 {$\scriptstyle \pm 0.21 $} & 0.48 {$\scriptstyle \pm 0.21 $}\\
					\midrule
					\multirow{2}*{\parbox{1.3cm}{$n=100$}} & 
					R-FAQ  & 0.96 {$\scriptstyle \pm 0.09 $} & 0.91 {$\scriptstyle \pm 0.16 $} & 0.80 {$\scriptstyle \pm 0.25 $} & 0.70 {$\scriptstyle \pm 0.28 $} & 0.65 {$\scriptstyle \pm 0.27 $} & 0.54 {$\scriptstyle \pm 0.28 $}\\
					& R-LWCD  & 0.95 {$\scriptstyle \pm 0.09 $} & 0.89 {$\scriptstyle \pm 0.17 $} & 0.78 {$\scriptstyle \pm 0.24 $} & 0.69 {$\scriptstyle \pm 0.28 $} & 0.62 {$\scriptstyle \pm 0.28 $} & 0.53 {$\scriptstyle \pm 0.28 $}\\
					\bottomrule
					\toprule
					\multirow{6}*{\parbox{1.3cm}{$n=200$}} & 
					spectral  & 0.96 {$\scriptstyle \pm 0.01 $} & 0.95 {$\scriptstyle \pm 0.01 $} & 0.91 {$\scriptstyle \pm 0.03 $} & 0.86 {$\scriptstyle \pm 0.06 $} & 0.84 {$\scriptstyle \pm 0.06 $} & 0.80 {$\scriptstyle \pm 0.09 $}\\
					& GnCR  & 0.98 {$\scriptstyle \pm 0.00 $} & 0.96 {$\scriptstyle \pm 0.04 $} & 0.93 {$\scriptstyle \pm 0.07 $} & 0.87 {$\scriptstyle \pm 0.15 $} & 0.81 {$\scriptstyle \pm 0.20 $} & 0.80 {$\scriptstyle \pm 0.18 $}\\
					& FAQ  & 0.98 {$\scriptstyle \pm 0.00 $} & 0.97 {$\scriptstyle \pm 0.00 $} & 0.94 {$\scriptstyle \pm 0.02 $} & 0.89 {$\scriptstyle \pm 0.08 $} & 0.87 {$\scriptstyle \pm 0.08 $} & 0.82 {$\scriptstyle \pm 0.13 $}\\
					& LWCD  & 0.98 {$\scriptstyle \pm 0.00 $} & 0.97 {$\scriptstyle \pm 0.00 $} & 0.94 {$\scriptstyle \pm 0.02 $} & 0.89 {$\scriptstyle \pm 0.08 $} & 0.87 {$\scriptstyle \pm 0.08 $} & 0.82 {$\scriptstyle \pm 0.13 $}\\
					& UBI  & 0.97 {$\scriptstyle \pm 0.00 $} & 0.96 {$\scriptstyle \pm 0.01 $} & 0.92 {$\scriptstyle \pm 0.03 $} & 0.89 {$\scriptstyle \pm 0.06 $} & 0.86 {$\scriptstyle \pm 0.07 $} & 0.82 {$\scriptstyle \pm 0.12 $}\\
					& Manopt  & 0.97 {$\scriptstyle \pm 0.00 $} & 0.95 {$\scriptstyle \pm 0.01 $} & 0.91 {$\scriptstyle \pm 0.03 $} & 0.86 {$\scriptstyle \pm 0.06 $} & 0.84 {$\scriptstyle \pm 0.06 $} & 0.80 {$\scriptstyle \pm 0.09 $}\\
					\midrule
					\multirow{6}*{\parbox{1.3cm}{$n=200$}} & 
					$\eta$-Spectr.  & 0.99 {$\scriptstyle \pm 0.00 $} & 0.99 {$\scriptstyle \pm 0.00 $} & 0.98 {$\scriptstyle \pm 0.00 $} & 0.97 {$\scriptstyle \pm 0.00 $} & 0.96 {$\scriptstyle \pm 0.00 $} & 0.94 {$\scriptstyle \pm 0.06 $}\\
					& HGnCR  & 1.00 {$\scriptstyle \pm 0.00 $} & 0.99 {$\scriptstyle \pm 0.00 $} & 0.99 {$\scriptstyle \pm 0.00 $} & 0.89 {$\scriptstyle \pm 0.22 $} & 0.85 {$\scriptstyle \pm 0.23 $} & 0.83 {$\scriptstyle \pm 0.25 $}\\
					& H-FAQ  & 1.00 {$\scriptstyle \pm 0.00 $} & 1.00 {$\scriptstyle \pm 0.00 $} & 0.99 {$\scriptstyle \pm 0.01 $} & 0.95 {$\scriptstyle \pm 0.08 $} & 0.94 {$\scriptstyle \pm 0.09 $} & 0.91 {$\scriptstyle \pm 0.13 $}\\
					& H-LWCD  & 1.00 {$\scriptstyle \pm 0.00 $} & 1.00 {$\scriptstyle \pm 0.00 $} & 0.99 {$\scriptstyle \pm 0.02 $} & 0.94 {$\scriptstyle \pm 0.09 $} & 0.94 {$\scriptstyle \pm 0.09 $} & 0.90 {$\scriptstyle \pm 0.14 $}\\
					& H-UBI  & 0.99 {$\scriptstyle \pm 0.00 $} & 0.99 {$\scriptstyle \pm 0.00 $} & 0.98 {$\scriptstyle \pm 0.00 $} & 0.97 {$\scriptstyle \pm 0.00 $} & 0.96 {$\scriptstyle \pm 0.01 $} & 0.94 {$\scriptstyle \pm 0.03 $}\\
					& H-Manopt  & 1.00 {$\scriptstyle \pm 0.00 $} & 0.99 {$\scriptstyle \pm 0.00 $} & 0.97 {$\scriptstyle \pm 0.02 $} & 0.92 {$\scriptstyle \pm 0.06 $} & 0.89 {$\scriptstyle \pm 0.07 $} & 0.84 {$\scriptstyle \pm 0.10 $}\\
					\midrule
					\multirow{2}*{\parbox{1.3cm}{$n=200$}} & 
					R-FAQ  & 1.00 {$\scriptstyle \pm 0.00 $} & 1.00 {$\scriptstyle \pm 0.00 $} & 0.99 {$\scriptstyle \pm 0.04 $} & 0.95 {$\scriptstyle \pm 0.10 $} & 0.94 {$\scriptstyle \pm 0.10 $} & 0.90 {$\scriptstyle \pm 0.15 $}\\
					& R-LWCD  & 0.99 {$\scriptstyle \pm 0.00 $} & 1.00 {$\scriptstyle \pm 0.00 $} & 0.99 {$\scriptstyle \pm 0.04 $} & 0.94 {$\scriptstyle \pm 0.09 $} & 0.94 {$\scriptstyle \pm 0.10 $} & 0.90 {$\scriptstyle \pm 0.16 $}\\
					\bottomrule
					\toprule
					\multirow{6}*{\parbox{1.3cm}{$n=500$}} & 
					spectral  & 0.98 {$\scriptstyle \pm 0.00 $} & 0.98 {$\scriptstyle \pm 0.00 $} & 0.96 {$\scriptstyle \pm 0.00 $} & 0.95 {$\scriptstyle \pm 0.01 $} & 0.94 {$\scriptstyle \pm 0.01 $} & 0.93 {$\scriptstyle \pm 0.01 $}\\
					& GnCR  & 0.99 {$\scriptstyle \pm 0.00 $} & 0.99 {$\scriptstyle \pm 0.00 $} & 0.98 {$\scriptstyle \pm 0.00 $} & 0.97 {$\scriptstyle \pm 0.00 $} & 0.96 {$\scriptstyle \pm 0.00 $} & 0.95 {$\scriptstyle \pm 0.05 $}\\
					& FAQ  & 0.99 {$\scriptstyle \pm 0.00 $} & 0.99 {$\scriptstyle \pm 0.00 $} & 0.98 {$\scriptstyle \pm 0.00 $} & 0.97 {$\scriptstyle \pm 0.00 $} & 0.96 {$\scriptstyle \pm 0.00 $} & 0.95 {$\scriptstyle \pm 0.00 $}\\
					& LWCD  & 0.99 {$\scriptstyle \pm 0.00 $} & 0.99 {$\scriptstyle \pm 0.00 $} & 0.98 {$\scriptstyle \pm 0.00 $} & 0.97 {$\scriptstyle \pm 0.00 $} & 0.96 {$\scriptstyle \pm 0.00 $} & 0.95 {$\scriptstyle \pm 0.00 $}\\
					& UBI  & 0.99 {$\scriptstyle \pm 0.00 $} & 0.98 {$\scriptstyle \pm 0.00 $} & 0.97 {$\scriptstyle \pm 0.00 $} & 0.96 {$\scriptstyle \pm 0.01 $} & 0.95 {$\scriptstyle \pm 0.00 $} & 0.94 {$\scriptstyle \pm 0.00 $}\\
					& Manopt  & 0.99 {$\scriptstyle \pm 0.00 $} & 0.98 {$\scriptstyle \pm 0.00 $} & 0.97 {$\scriptstyle \pm 0.00 $} & 0.95 {$\scriptstyle \pm 0.00 $} & 0.94 {$\scriptstyle \pm 0.01 $} & 0.93 {$\scriptstyle \pm 0.01 $}\\
					\midrule
					\multirow{6}*{\parbox{1.3cm}{$n=500$}} & 
					$\eta$-Spectr.  & 1.00 {$\scriptstyle \pm 0.00 $} & 1.00 {$\scriptstyle \pm 0.00 $} & 0.99 {$\scriptstyle \pm 0.00 $} & 0.99 {$\scriptstyle \pm 0.00 $} & 0.99 {$\scriptstyle \pm 0.00 $} & 0.98 {$\scriptstyle \pm 0.00 $}\\
					& HGnCR  & 1.00 {$\scriptstyle \pm 0.00 $} & 1.00 {$\scriptstyle \pm 0.00 $} & 0.99 {$\scriptstyle \pm 0.00 $} & 0.99 {$\scriptstyle \pm 0.00 $} & 0.99 {$\scriptstyle \pm 0.00 $} & 0.99 {$\scriptstyle \pm 0.00 $}\\
					& H-FAQ  & 1.00 {$\scriptstyle \pm 0.00 $} & 1.00 {$\scriptstyle \pm 0.00 $} & 1.00 {$\scriptstyle \pm 0.00 $} & 0.99 {$\scriptstyle \pm 0.00 $} & 0.99 {$\scriptstyle \pm 0.00 $} & 0.99 {$\scriptstyle \pm 0.00 $}\\
					& H-LWCD  & 1.00 {$\scriptstyle \pm 0.00 $} & 1.00 {$\scriptstyle \pm 0.00 $} & 1.00 {$\scriptstyle \pm 0.00 $} & 0.99 {$\scriptstyle \pm 0.00 $} & 0.99 {$\scriptstyle \pm 0.00 $} & 0.99 {$\scriptstyle \pm 0.00 $}\\
					& H-UBI  & 1.00 {$\scriptstyle \pm 0.00 $} & 1.00 {$\scriptstyle \pm 0.00 $} & 0.99 {$\scriptstyle \pm 0.00 $} & 0.99 {$\scriptstyle \pm 0.00 $} & 0.99 {$\scriptstyle \pm 0.00 $} & 0.98 {$\scriptstyle \pm 0.00 $}\\
					& H-Manopt  & 1.00 {$\scriptstyle \pm 0.00 $} & 1.00 {$\scriptstyle \pm 0.00 $} & 1.00 {$\scriptstyle \pm 0.00 $} & 0.99 {$\scriptstyle \pm 0.00 $} & 0.98 {$\scriptstyle \pm 0.01 $} & 0.97 {$\scriptstyle \pm 0.01 $}\\
					\midrule
					\multirow{2}*{\parbox{1.3cm}{$n=500$}} & 
					R-FAQ  & 1.00 {$\scriptstyle \pm 0.00 $} & 1.00 {$\scriptstyle \pm 0.00 $} & 1.00 {$\scriptstyle \pm 0.00 $} & 1.00 {$\scriptstyle \pm 0.00 $} & 1.00 {$\scriptstyle \pm 0.00 $} & 1.00 {$\scriptstyle \pm 0.00 $}\\
					& R-LWCD  & 1.00 {$\scriptstyle \pm 0.00 $} & 1.00 {$\scriptstyle \pm 0.00 $} & 1.00 {$\scriptstyle \pm 0.00 $} & 1.00 {$\scriptstyle \pm 0.00 $} & 1.00 {$\scriptstyle \pm 0.00 $} & 1.00 {$\scriptstyle \pm 0.01 $}\\
					\bottomrule
				\end{tabular}
			\end{sc}
		\end{small}
	\end{center}
	\vskip -.1in
\end{table}

\begin{table}[t]
	\caption{Kendall-$\tau$ score for different values of $s/s_{\text{lim}}$, for the same methods as in Table~\ref{tb:rob-ser-synth}, for different values of $n$ (namely, $100 \:,\: 200 \:,\: 500$), and $\delta=n/20$ (namely, $5 \:,\: 10 \:,\: 25$).}
	\label{tb:KT-vs-s-delta005n}
	%\vskip 0.15in
	\begin{center}
		\begin{small}
			\begin{sc}
				\begin{tabular}{lccccccr}
					\toprule
					& & $s/s_{\text{lim}}=0.5$ & $s/s_{\text{lim}}=1$ & $s/s_{\text{lim}}=2.5$ & $s/s_{\text{lim}}=5$ & $s/s_{\text{lim}}=7.5$ & $s/s_{\text{lim}}=10$ \\
					\midrule
					\multirow{6}*{\parbox{1.3cm}{$n=100$}} & 
					spectral  & 0.46 {$\scriptstyle \pm0.24 $} & 0.39 {$\scriptstyle \pm0.21 $} & 0.31 {$\scriptstyle \pm0.20 $} & 0.25 {$\scriptstyle \pm0.16 $} & 0.22 {$\scriptstyle \pm0.15 $} & 0.20 {$\scriptstyle \pm0.14 $}\\
					& GnCR  & 0.43 {$\scriptstyle \pm0.28 $} & 0.37 {$\scriptstyle \pm0.21 $} & 0.32 {$\scriptstyle \pm0.21 $} & 0.25 {$\scriptstyle \pm0.16 $} & 0.25 {$\scriptstyle \pm0.14 $} & 0.20 {$\scriptstyle \pm0.13 $}\\
					& FAQ  & 0.45 {$\scriptstyle \pm0.25 $} & 0.39 {$\scriptstyle \pm0.22 $} & 0.31 {$\scriptstyle \pm0.21 $} & 0.25 {$\scriptstyle \pm0.17 $} & 0.23 {$\scriptstyle \pm0.15 $} & 0.22 {$\scriptstyle \pm0.14 $}\\
					& LWCD  & 0.45 {$\scriptstyle \pm0.26 $} & 0.39 {$\scriptstyle \pm0.22 $} & 0.31 {$\scriptstyle \pm0.21 $} & 0.25 {$\scriptstyle \pm0.17 $} & 0.23 {$\scriptstyle \pm0.15 $} & 0.22 {$\scriptstyle \pm0.14 $}\\
					& UBI  & 0.45 {$\scriptstyle \pm0.26 $} & 0.40 {$\scriptstyle \pm0.22 $} & 0.32 {$\scriptstyle \pm0.21 $} & 0.26 {$\scriptstyle \pm0.17 $} & 0.23 {$\scriptstyle \pm0.15 $} & 0.23 {$\scriptstyle \pm0.14 $}\\
					& Manopt  & 0.46 {$\scriptstyle \pm0.25 $} & 0.40 {$\scriptstyle \pm0.21 $} & 0.31 {$\scriptstyle \pm0.20 $} & 0.25 {$\scriptstyle \pm0.16 $} & 0.22 {$\scriptstyle \pm0.15 $} & 0.21 {$\scriptstyle \pm0.14 $}\\
					\midrule
					\multirow{6}*{\parbox{1.3cm}{$n=100$}} & 
					$\eta$-Spectr.  & 0.65 {$\scriptstyle \pm0.33 $} & 0.50 {$\scriptstyle \pm0.28 $} & 0.37 {$\scriptstyle \pm0.24 $} & 0.28 {$\scriptstyle \pm0.19 $} & 0.25 {$\scriptstyle \pm0.16 $} & 0.23 {$\scriptstyle \pm0.16 $}\\
					& HGnCR  & 0.53 {$\scriptstyle \pm0.31 $} & 0.43 {$\scriptstyle \pm0.26 $} & 0.36 {$\scriptstyle \pm0.22 $} & 0.25 {$\scriptstyle \pm0.17 $} & 0.22 {$\scriptstyle \pm0.15 $} & 0.18 {$\scriptstyle \pm0.14 $}\\
					& H-FAQ  & 0.48 {$\scriptstyle \pm0.26 $} & 0.41 {$\scriptstyle \pm0.23 $} & 0.33 {$\scriptstyle \pm0.23 $} & 0.28 {$\scriptstyle \pm0.17 $} & 0.24 {$\scriptstyle \pm0.16 $} & 0.23 {$\scriptstyle \pm0.15 $}\\
					& H-LWCD  & 0.49 {$\scriptstyle \pm0.27 $} & 0.42 {$\scriptstyle \pm0.23 $} & 0.34 {$\scriptstyle \pm0.23 $} & 0.28 {$\scriptstyle \pm0.18 $} & 0.24 {$\scriptstyle \pm0.16 $} & 0.23 {$\scriptstyle \pm0.16 $}\\
					& H-UBI  & 0.60 {$\scriptstyle \pm0.35 $} & 0.52 {$\scriptstyle \pm0.29 $} & 0.40 {$\scriptstyle \pm0.26 $} & 0.28 {$\scriptstyle \pm0.19 $} & 0.25 {$\scriptstyle \pm0.16 $} & 0.23 {$\scriptstyle \pm0.15 $}\\
					& H-Manopt  & 0.54 {$\scriptstyle \pm0.30 $} & 0.44 {$\scriptstyle \pm0.25 $} & 0.33 {$\scriptstyle \pm0.22 $} & 0.25 {$\scriptstyle \pm0.16 $} & 0.22 {$\scriptstyle \pm0.15 $} & 0.21 {$\scriptstyle \pm0.14 $}\\
					\midrule
					\multirow{2}*{\parbox{1.3cm}{$n=100$}} & 
					R-FAQ  & 0.48 {$\scriptstyle \pm0.25 $} & 0.41 {$\scriptstyle \pm0.22 $} & 0.33 {$\scriptstyle \pm0.21 $} & 0.26 {$\scriptstyle \pm0.18 $} & 0.23 {$\scriptstyle \pm0.15 $} & 0.23 {$\scriptstyle \pm0.16 $}\\
					& R-LWCD  & 0.47 {$\scriptstyle \pm0.24 $} & 0.41 {$\scriptstyle \pm0.22 $} & 0.32 {$\scriptstyle \pm0.21 $} & 0.25 {$\scriptstyle \pm0.16 $} & 0.22 {$\scriptstyle \pm0.15 $} & 0.22 {$\scriptstyle \pm0.15 $}\\
					\bottomrule
					\toprule
					\multirow{6}*{\parbox{1.3cm}{$n=200$}} & 
					spectral  & 0.72 {$\scriptstyle \pm0.21 $} & 0.59 {$\scriptstyle \pm0.24 $} & 0.49 {$\scriptstyle \pm0.26 $} & 0.42 {$\scriptstyle \pm0.23 $} & 0.35 {$\scriptstyle \pm0.20 $} & 0.31 {$\scriptstyle \pm0.18 $}\\
					& GnCR  & 0.69 {$\scriptstyle \pm0.29 $} & 0.56 {$\scriptstyle \pm0.31 $} & 0.45 {$\scriptstyle \pm0.26 $} & 0.37 {$\scriptstyle \pm0.27 $} & 0.34 {$\scriptstyle \pm0.22 $} & 0.32 {$\scriptstyle \pm0.23 $}\\
					& FAQ  & 0.72 {$\scriptstyle \pm0.24 $} & 0.60 {$\scriptstyle \pm0.26 $} & 0.49 {$\scriptstyle \pm0.26 $} & 0.41 {$\scriptstyle \pm0.24 $} & 0.35 {$\scriptstyle \pm0.21 $} & 0.33 {$\scriptstyle \pm0.20 $}\\
					& LWCD  & 0.72 {$\scriptstyle \pm0.24 $} & 0.60 {$\scriptstyle \pm0.26 $} & 0.49 {$\scriptstyle \pm0.27 $} & 0.42 {$\scriptstyle \pm0.25 $} & 0.36 {$\scriptstyle \pm0.21 $} & 0.33 {$\scriptstyle \pm0.20 $}\\
					& UBI  & 0.73 {$\scriptstyle \pm0.26 $} & 0.59 {$\scriptstyle \pm0.28 $} & 0.50 {$\scriptstyle \pm0.28 $} & 0.42 {$\scriptstyle \pm0.25 $} & 0.35 {$\scriptstyle \pm0.21 $} & 0.33 {$\scriptstyle \pm0.21 $}\\
					& Manopt  & 0.72 {$\scriptstyle \pm0.22 $} & 0.59 {$\scriptstyle \pm0.24 $} & 0.49 {$\scriptstyle \pm0.26 $} & 0.42 {$\scriptstyle \pm0.24 $} & 0.35 {$\scriptstyle \pm0.20 $} & 0.31 {$\scriptstyle \pm0.18 $}\\
					\midrule
					\multirow{6}*{\parbox{1.3cm}{$n=200$}} & 
					$\eta$-Spectr.  & 0.99 {$\scriptstyle \pm0.00 $} & 0.91 {$\scriptstyle \pm0.21 $} & 0.65 {$\scriptstyle \pm0.33 $} & 0.52 {$\scriptstyle \pm0.30 $} & 0.41 {$\scriptstyle \pm0.25 $} & 0.37 {$\scriptstyle \pm0.23 $}\\
					& HGnCR  & 0.73 {$\scriptstyle \pm0.33 $} & 0.61 {$\scriptstyle \pm0.32 $} & 0.50 {$\scriptstyle \pm0.31 $} & 0.44 {$\scriptstyle \pm0.29 $} & 0.39 {$\scriptstyle \pm0.25 $} & 0.35 {$\scriptstyle \pm0.22 $}\\
					& H-FAQ  & 0.75 {$\scriptstyle \pm0.24 $} & 0.63 {$\scriptstyle \pm0.27 $} & 0.53 {$\scriptstyle \pm0.29 $} & 0.46 {$\scriptstyle \pm0.27 $} & 0.38 {$\scriptstyle \pm0.23 $} & 0.35 {$\scriptstyle \pm0.23 $}\\
					& H-LWCD  & 0.75 {$\scriptstyle \pm0.23 $} & 0.62 {$\scriptstyle \pm0.27 $} & 0.53 {$\scriptstyle \pm0.29 $} & 0.46 {$\scriptstyle \pm0.27 $} & 0.38 {$\scriptstyle \pm0.23 $} & 0.35 {$\scriptstyle \pm0.22 $}\\
					& H-UBI  & 0.94 {$\scriptstyle \pm0.19 $} & 0.82 {$\scriptstyle \pm0.30 $} & 0.69 {$\scriptstyle \pm0.34 $} & 0.57 {$\scriptstyle \pm0.32 $} & 0.46 {$\scriptstyle \pm0.28 $} & 0.40 {$\scriptstyle \pm0.23 $}\\
					& H-Manopt  & 0.84 {$\scriptstyle \pm0.23 $} & 0.67 {$\scriptstyle \pm0.29 $} & 0.54 {$\scriptstyle \pm0.29 $} & 0.45 {$\scriptstyle \pm0.26 $} & 0.36 {$\scriptstyle \pm0.21 $} & 0.31 {$\scriptstyle \pm0.19 $}\\
					\midrule
					\multirow{2}*{\parbox{1.3cm}{$n=200$}} & 
					R-FAQ  & 0.75 {$\scriptstyle \pm0.23 $} & 0.62 {$\scriptstyle \pm0.26 $} & 0.53 {$\scriptstyle \pm0.28 $} & 0.45 {$\scriptstyle \pm0.27 $} & 0.38 {$\scriptstyle \pm0.23 $} & 0.33 {$\scriptstyle \pm0.23 $}\\
					& R-LWCD  & 0.74 {$\scriptstyle \pm0.22 $} & 0.62 {$\scriptstyle \pm0.25 $} & 0.51 {$\scriptstyle \pm0.27 $} & 0.44 {$\scriptstyle \pm0.25 $} & 0.37 {$\scriptstyle \pm0.23 $} & 0.33 {$\scriptstyle \pm0.21 $}\\
					\bottomrule
					\toprule
					\multirow{6}*{\parbox{1.3cm}{$n=500$}} & 
					spectral  & 0.96 {$\scriptstyle \pm0.03 $} & 0.93 {$\scriptstyle \pm0.05 $} & 0.86 {$\scriptstyle \pm0.11 $} & 0.76 {$\scriptstyle \pm0.18 $} & 0.71 {$\scriptstyle \pm0.19 $} & 0.67 {$\scriptstyle \pm0.21 $}\\
					& GnCR  & 0.90 {$\scriptstyle \pm0.21 $} & 0.80 {$\scriptstyle \pm0.28 $} & 0.71 {$\scriptstyle \pm0.31 $} & 0.60 {$\scriptstyle \pm0.31 $} & 0.62 {$\scriptstyle \pm0.29 $} & 0.55 {$\scriptstyle \pm0.31 $}\\
					& FAQ  & 0.98 {$\scriptstyle \pm0.03 $} & 0.95 {$\scriptstyle \pm0.06 $} & 0.87 {$\scriptstyle \pm0.13 $} & 0.76 {$\scriptstyle \pm0.21 $} & 0.72 {$\scriptstyle \pm0.22 $} & 0.67 {$\scriptstyle \pm0.24 $}\\
					& LWCD  & 0.98 {$\scriptstyle \pm0.03 $} & 0.95 {$\scriptstyle \pm0.06 $} & 0.87 {$\scriptstyle \pm0.13 $} & 0.76 {$\scriptstyle \pm0.21 $} & 0.72 {$\scriptstyle \pm0.22 $} & 0.67 {$\scriptstyle \pm0.24 $}\\
					& UBI  & 0.97 {$\scriptstyle \pm0.02 $} & 0.95 {$\scriptstyle \pm0.04 $} & 0.88 {$\scriptstyle \pm0.14 $} & 0.76 {$\scriptstyle \pm0.24 $} & 0.71 {$\scriptstyle \pm0.25 $} & 0.67 {$\scriptstyle \pm0.25 $}\\
					& Manopt  & 0.97 {$\scriptstyle \pm0.03 $} & 0.94 {$\scriptstyle \pm0.06 $} & 0.86 {$\scriptstyle \pm0.12 $} & 0.76 {$\scriptstyle \pm0.18 $} & 0.72 {$\scriptstyle \pm0.19 $} & 0.67 {$\scriptstyle \pm0.22 $}\\
					\midrule
					\multirow{6}*{\parbox{1.3cm}{$n=500$}} & 
					$\eta$-Spectr.  & 1.00 {$\scriptstyle \pm0.00 $} & 1.00 {$\scriptstyle \pm0.00 $} & 0.99 {$\scriptstyle \pm0.00 $} & 0.96 {$\scriptstyle \pm0.12 $} & 0.88 {$\scriptstyle \pm0.18 $} & 0.81 {$\scriptstyle \pm0.24 $}\\
					& HGnCR  & 1.00 {$\scriptstyle \pm0.00 $} & 0.96 {$\scriptstyle \pm0.18 $} & 0.87 {$\scriptstyle \pm0.28 $} & 0.80 {$\scriptstyle \pm0.32 $} & 0.70 {$\scriptstyle \pm0.36 $} & 0.75 {$\scriptstyle \pm0.33 $}\\
					& H-FAQ  & 0.99 {$\scriptstyle \pm0.02 $} & 0.98 {$\scriptstyle \pm0.06 $} & 0.91 {$\scriptstyle \pm0.13 $} & 0.82 {$\scriptstyle \pm0.21 $} & 0.78 {$\scriptstyle \pm0.23 $} & 0.74 {$\scriptstyle \pm0.26 $}\\
					& H-LWCD  & 0.99 {$\scriptstyle \pm0.03 $} & 0.97 {$\scriptstyle \pm0.07 $} & 0.90 {$\scriptstyle \pm0.13 $} & 0.80 {$\scriptstyle \pm0.21 $} & 0.77 {$\scriptstyle \pm0.23 $} & 0.72 {$\scriptstyle \pm0.25 $}\\
					& H-UBI  & 1.00 {$\scriptstyle \pm0.00 $} & 1.00 {$\scriptstyle \pm0.00 $} & 0.99 {$\scriptstyle \pm0.00 $} & 0.98 {$\scriptstyle \pm0.07 $} & 0.95 {$\scriptstyle \pm0.13 $} & 0.92 {$\scriptstyle \pm0.17 $}\\
					& H-Manopt  & 1.00 {$\scriptstyle \pm0.00 $} & 0.99 {$\scriptstyle \pm0.01 $} & 0.93 {$\scriptstyle \pm0.12 $} & 0.81 {$\scriptstyle \pm0.21 $} & 0.76 {$\scriptstyle \pm0.22 $} & 0.72 {$\scriptstyle \pm0.25 $}\\
					\midrule
					\multirow{2}*{\parbox{1.3cm}{$n=500$}} & 
					R-FAQ  & 0.99 {$\scriptstyle \pm0.03 $} & 0.97 {$\scriptstyle \pm0.07 $} & 0.90 {$\scriptstyle \pm0.13 $} & 0.80 {$\scriptstyle \pm0.21 $} & 0.76 {$\scriptstyle \pm0.23 $} & 0.72 {$\scriptstyle \pm0.25 $}\\
					& R-LWCD  & 0.98 {$\scriptstyle \pm0.03 $} & 0.96 {$\scriptstyle \pm0.06 $} & 0.89 {$\scriptstyle \pm0.13 $} & 0.80 {$\scriptstyle \pm0.21 $} & 0.76 {$\scriptstyle \pm0.23 $} & 0.71 {$\scriptstyle \pm0.25 $}\\
					\bottomrule

				\end{tabular}
			\end{sc}
		\end{small}
	\end{center}
	\vskip -.1in
\end{table}

\subsubsection{Seriation with Duplications}
%\textbf{ }
Tables~\ref{tb:SerDupliDense-full}, \ref{tb:SerDupliDenseNoise5} and \ref{tb:SerDupliDenseNoise10} display additional results of Seriation with Duplication (with the same scores as in Table~\ref{tb:SerDupliDense}) on dense matrices expanding thoses from \S\ref{ssec:exps-ser-dupli}.
Tables~\ref{tb:SerDupliSparseBD40-full}, \ref{tb:SerDupliSparseBD20-full} expand the results from \S\ref{ssec:exps-ser-dupli} on matrices in  $\cM_N(\delta,s)$.

\begin{table}[t]
	\caption{
		Results for Seriation with Duplications on dense, strong-R matrices (with several values of the parameter $\gamma$ and $N/n$), and no noise added.
	}
	\label{tb:SerDupliDense-full}
	%\vskip 0.15in
	\begin{center}
		\begin{small}
			\begin{sc}
				\begin{tabular}{llcccccc}
					\toprule
					$\gamma$ & $N/n$ & method & d2S & Huber (x1e-7) & meanDist & stdDist & Time (x1e-2s) \\ 
					\midrule
					\multirow{9}*{\parbox{1.cm}{$0.1$}}
					&
					\multirow{3}*{\parbox{1.3cm}{$1.33$}}
					& spectral & 0.03 {$\scriptstyle \pm0.00$} & 8.33 {$\scriptstyle \pm0.01$} & 3.0 {$\scriptstyle \pm0.7$} & 5.5 {$\scriptstyle \pm1.0$} & 5.14 {$\scriptstyle \pm1.36$} \\ 
					& & $\eta$-Spectr. & 0.03 {$\scriptstyle \pm0.00$} & 8.33 {$\scriptstyle \pm0.01$} & 3.0 {$\scriptstyle \pm0.7$} & 5.5 {$\scriptstyle \pm1.0$} & 5.75 {$\scriptstyle \pm1.41$} \\ 
					& & H-UBI & 0.02 {$\scriptstyle \pm0.00$} & 8.33 {$\scriptstyle \pm0.01$} & 2.8 {$\scriptstyle \pm0.7$} & 5.2 {$\scriptstyle \pm1.0$} & 6.37 {$\scriptstyle \pm1.60$} \\
					
					\cmidrule{2-8}
					&  \multirow{3}*{\parbox{1.3cm}{$2$}} 
					& spectral & 0.03 {$\scriptstyle \pm0.00$} & 8.37 {$\scriptstyle \pm0.01$} & 7.1 {$\scriptstyle \pm1.0$} & 7.5 {$\scriptstyle \pm1.0$} & 5.05 {$\scriptstyle \pm1.05$} \\ 
					& & $\eta$-Spectr. & 0.03 {$\scriptstyle \pm0.00$} & 8.37 {$\scriptstyle \pm0.01$} & 7.1 {$\scriptstyle \pm1.0$} & 7.6 {$\scriptstyle \pm1.0$} & 5.41 {$\scriptstyle \pm1.07$} \\ 
					& & H-UBI & 0.03 {$\scriptstyle \pm0.00$} & 8.37 {$\scriptstyle \pm0.01$} & 7.0 {$\scriptstyle \pm1.0$} & 7.5 {$\scriptstyle \pm0.9$} & 6.14 {$\scriptstyle \pm1.17$} \\ 
					
					\cmidrule{2-8}
					& \multirow{3}*{\parbox{1.3cm}{$4$}}
					& spectral & 0.02 {$\scriptstyle \pm0.00$} & 8.35 {$\scriptstyle \pm0.01$} & 12.8 {$\scriptstyle \pm2.2$} & 7.8 {$\scriptstyle \pm1.8$} & 5.41 {$\scriptstyle \pm2.03$} \\ 
					& & $\eta$-Spectr. & 0.02 {$\scriptstyle \pm0.00$} & 8.35 {$\scriptstyle \pm0.01$} & 12.9 {$\scriptstyle \pm2.3$} & 7.9 {$\scriptstyle \pm2.1$} & 5.86 {$\scriptstyle \pm2.16$} \\ 
					& & H-UBI & 0.03 {$\scriptstyle \pm0.00$} & 8.35 {$\scriptstyle \pm0.01$} & 13.1 {$\scriptstyle \pm1.4$} & 7.9 {$\scriptstyle \pm1.4$} & 7.05 {$\scriptstyle \pm2.36$} \\

					\midrule

					\multirow{9}*{\parbox{1.cm}{$0.5$}}
					&
					\multirow{3}*{\parbox{1.3cm}{$1.33$}}
					& spectral & 0.25 {$\scriptstyle \pm0.04$} & 1.36 {$\scriptstyle \pm0.03$} & 6.1 {$\scriptstyle \pm1.8$} & 7.9 {$\scriptstyle \pm1.6$} & 8.74 {$\scriptstyle \pm4.85$} \\ 
					& & $\eta$-Spectr. & 0.15 {$\scriptstyle \pm0.02$} & 1.30 {$\scriptstyle \pm0.01$} & 2.2 {$\scriptstyle \pm0.7$} & 3.7 {$\scriptstyle \pm1.1$} & 6.12 {$\scriptstyle \pm4.84$} \\ 
					& & H-UBI & 0.24 {$\scriptstyle \pm0.04$} & 1.35 {$\scriptstyle \pm0.03$} & 5.5 {$\scriptstyle \pm1.6$} & 7.3 {$\scriptstyle \pm1.4$} & 11.06 {$\scriptstyle \pm7.56$} \\ 
					
					\cmidrule{2-8}
					&  \multirow{3}*{\parbox{1.3cm}{$2$}} 
					& spectral & 0.27 {$\scriptstyle \pm0.02$} & 1.41 {$\scriptstyle \pm0.02$} & 9.5 {$\scriptstyle \pm1.6$} & 8.4 {$\scriptstyle \pm1.3$} & 7.47 {$\scriptstyle \pm3.20$} \\ 
					& & $\eta$-Spectr. & 0.22 {$\scriptstyle \pm0.02$} & 1.37 {$\scriptstyle \pm0.02$} & 6.6 {$\scriptstyle \pm1.5$} & 6.7 {$\scriptstyle \pm1.9$} & 7.89 {$\scriptstyle \pm3.89$} \\ 
					& & H-UBI & 0.26 {$\scriptstyle \pm0.02$} & 1.40 {$\scriptstyle \pm0.02$} & 9.0 {$\scriptstyle \pm1.5$} & 8.1 {$\scriptstyle \pm1.2$} & 10.09 {$\scriptstyle \pm4.90$} \\ 
					
					\cmidrule{2-8}
					& \multirow{3}*{\parbox{1.3cm}{$4$}}
					& spectral & 0.18 {$\scriptstyle \pm0.01$} & 1.35 {$\scriptstyle \pm0.01$} & 14.4 {$\scriptstyle \pm2.8$} & 8.7 {$\scriptstyle \pm2.7$} & 6.53 {$\scriptstyle \pm1.90$} \\ 
					& & $\eta$-Spectr. & 0.18 {$\scriptstyle \pm0.01$} & 1.35 {$\scriptstyle \pm0.01$} & 14.3 {$\scriptstyle \pm2.9$} & 8.9 {$\scriptstyle \pm2.9$} & 7.59 {$\scriptstyle \pm2.28$} \\ 
					& & H-UBI & 0.19 {$\scriptstyle \pm0.01$} & 1.35 {$\scriptstyle \pm0.01$} & 14.8 {$\scriptstyle \pm2.5$} & 8.8 {$\scriptstyle \pm2.1$} & 8.62 {$\scriptstyle \pm2.46$} \\ 
					
					\midrule
					
					\multirow{9}*{\parbox{1.cm}{$1$}}
					&
					\multirow{3}*{\parbox{1.3cm}{$1.33$}}
					& spectral & 0.61 {$\scriptstyle \pm0.02$} & 2.10 {$\scriptstyle \pm0.13$} & 15.2 {$\scriptstyle \pm2.4$} & 15.2 {$\scriptstyle \pm1.4$} & 9.04 {$\scriptstyle \pm8.61$} \\ 
					& & $\eta$-Spectr. & 0.30 {$\scriptstyle \pm0.06$} & 1.48 {$\scriptstyle \pm0.08$} & 2.2 {$\scriptstyle \pm1.4$} & 3.1 {$\scriptstyle \pm1.5$} & 15.35 {$\scriptstyle \pm7.54$} \\ 
					& & H-UBI & 0.30 {$\scriptstyle \pm0.12$} & 1.50 {$\scriptstyle \pm0.15$} & 2.4 {$\scriptstyle \pm2.1$} & 3.1 {$\scriptstyle \pm2.1$} & 26.12 {$\scriptstyle \pm2.96$} \\

					\cmidrule{2-8}
					&  \multirow{3}*{\parbox{1.3cm}{$2$}} 
					& spectral & 0.60 {$\scriptstyle \pm0.03$} & 2.46 {$\scriptstyle \pm0.11$} & 19.3 {$\scriptstyle \pm6.6$} & 12.6 {$\scriptstyle \pm4.9$} & 1.78 {$\scriptstyle \pm0.31$} \\ 
					& & $\eta$-Spectr. & 0.42 {$\scriptstyle \pm0.04$} & 1.91 {$\scriptstyle \pm0.13$} & 10.3 {$\scriptstyle \pm8.6$} & 9.8 {$\scriptstyle \pm6.4$} & 1.20 {$\scriptstyle \pm0.51$} \\ 
					& & H-UBI & 0.49 {$\scriptstyle \pm0.05$} & 2.06 {$\scriptstyle \pm0.14$} & 10.4 {$\scriptstyle \pm7.9$} & 8.5 {$\scriptstyle \pm6.0$} & 2.57 {$\scriptstyle \pm0.21$} \\

					\cmidrule{2-8}
					& \multirow{3}*{\parbox{1.3cm}{$4$}}
					& spectral & 0.37 {$\scriptstyle \pm0.02$} & 1.81 {$\scriptstyle \pm0.05$} & 19.3 {$\scriptstyle \pm4.7$} & 11.6 {$\scriptstyle \pm4.4$} & 1.96 {$\scriptstyle \pm0.50$} \\ 
					& & $\eta$-Spectr. & 0.34 {$\scriptstyle \pm0.01$} & 1.78 {$\scriptstyle \pm0.04$} & 20.0 {$\scriptstyle \pm6.9$} & 13.2 {$\scriptstyle \pm6.1$} & 1.00 {$\scriptstyle \pm0.34$} \\ 
					& & H-UBI & 0.36 {$\scriptstyle \pm0.01$} & 1.80 {$\scriptstyle \pm0.04$} & 18.9 {$\scriptstyle \pm5.2$} & 11.5 {$\scriptstyle \pm4.8$} & 2.25 {$\scriptstyle \pm0.65$} \\ 
					
					\bottomrule
				\end{tabular}
			\end{sc}
		\end{small}
	\end{center}
	\vskip -.1in
\end{table}

\begin{table}[t]
	\caption{
		Results for Seriation with Duplications on dense, strong-R matrices (with several values of the parameter $\gamma$ and $N/n$), and noiseProp=$5\%$.
	}
	\label{tb:SerDupliDenseNoise5}
	%\vskip 0.15in
	\begin{center}
		\begin{small}
			\begin{sc}
				\begin{tabular}{llcccccc}
					\toprule
					$\gamma$ & $N/n$ & method & d2S & Huber (x1e-7) & meanDist & stdDist & Time (x1e-2s) \\ 
					\midrule
					\multirow{9}*{\parbox{1.cm}{$0.1$}}
					&
					\multirow{3}*{\parbox{1.3cm}{$1.33$}}
					& spectral & 0.07 {$\scriptstyle \pm0.00$} & 8.36 {$\scriptstyle \pm0.01$} & 5.7 {$\scriptstyle \pm0.9$} & 7.2 {$\scriptstyle \pm1.3$} & 1.27 {$\scriptstyle \pm0.78$} \\ 
					& & $\eta$-Spectr. & 0.07 {$\scriptstyle \pm0.00$} & 8.36 {$\scriptstyle \pm0.01$} & 5.7 {$\scriptstyle \pm0.9$} & 7.2 {$\scriptstyle \pm1.2$} & 1.39 {$\scriptstyle \pm0.80$} \\ 
					& & H-UBI & 0.07 {$\scriptstyle \pm0.00$} & 8.35 {$\scriptstyle \pm0.02$} & 5.2 {$\scriptstyle \pm0.9$} & 6.4 {$\scriptstyle \pm1.4$} & 1.48 {$\scriptstyle \pm0.93$} \\

					\cmidrule{2-8}
					&  \multirow{3}*{\parbox{1.3cm}{$2$}} 
					& spectral & 0.07 {$\scriptstyle \pm0.00$} & 8.38 {$\scriptstyle \pm0.01$} & 8.5 {$\scriptstyle \pm0.8$} & 7.7 {$\scriptstyle \pm0.8$} & 6.62 {$\scriptstyle \pm4.69$} \\ 
					& & $\eta$-Spectr. & 0.07 {$\scriptstyle \pm0.00$} & 8.38 {$\scriptstyle \pm0.01$} & 8.5 {$\scriptstyle \pm0.8$} & 7.7 {$\scriptstyle \pm0.9$} & 7.62 {$\scriptstyle \pm5.05$} \\ 
					& & H-UBI & 0.07 {$\scriptstyle \pm0.00$} & 8.37 {$\scriptstyle \pm0.01$} & 8.4 {$\scriptstyle \pm0.8$} & 7.5 {$\scriptstyle \pm0.9$} & 8.75 {$\scriptstyle \pm6.02$} \\

					\cmidrule{2-8}
					& \multirow{3}*{\parbox{1.3cm}{$4$}}
					& spectral & 0.06 {$\scriptstyle \pm0.00$} & 8.35 {$\scriptstyle \pm0.01$} & 13.7 {$\scriptstyle \pm2.4$} & 7.9 {$\scriptstyle \pm2.7$} & 5.15 {$\scriptstyle \pm1.49$} \\ 
					& & $\eta$-Spectr. & 0.06 {$\scriptstyle \pm0.00$} & 8.35 {$\scriptstyle \pm0.01$} & 13.8 {$\scriptstyle \pm2.3$} & 8.0 {$\scriptstyle \pm2.7$} & 5.47 {$\scriptstyle \pm1.58$} \\ 
					& & H-UBI & 0.06 {$\scriptstyle \pm0.00$} & 8.35 {$\scriptstyle \pm0.01$} & 13.8 {$\scriptstyle \pm2.2$} & 7.9 {$\scriptstyle \pm2.7$} & 6.17 {$\scriptstyle \pm1.58$} \\

					\midrule

					\multirow{9}*{\parbox{1.cm}{$0.5$}}
					&
					\multirow{3}*{\parbox{1.3cm}{$1.33$}}
					& spectral & 0.27 {$\scriptstyle \pm0.04$} & 1.37 {$\scriptstyle \pm0.03$} & 6.7 {$\scriptstyle \pm1.8$} & 8.4 {$\scriptstyle \pm1.6$} & 1.60 {$\scriptstyle \pm0.58$} \\ 
					& & $\eta$-Spectr. & 0.17 {$\scriptstyle \pm0.02$} & 1.31 {$\scriptstyle \pm0.01$} & 2.6 {$\scriptstyle \pm0.7$} & 4.1 {$\scriptstyle \pm1.0$} & 1.61 {$\scriptstyle \pm0.78$} \\ 
					& & H-UBI & 0.25 {$\scriptstyle \pm0.03$} & 1.36 {$\scriptstyle \pm0.02$} & 5.6 {$\scriptstyle \pm1.5$} & 7.3 {$\scriptstyle \pm1.4$} & 2.01 {$\scriptstyle \pm0.74$} \\

					\cmidrule{2-8}
					&  \multirow{3}*{\parbox{1.3cm}{$2$}} 
					& spectral & 0.28 {$\scriptstyle \pm0.02$} & 1.41 {$\scriptstyle \pm0.02$} & 9.7 {$\scriptstyle \pm1.5$} & 8.5 {$\scriptstyle \pm1.2$} & 1.07 {$\scriptstyle \pm0.58$} \\ 
					& & $\eta$-Spectr. & 0.23 {$\scriptstyle \pm0.02$} & 1.37 {$\scriptstyle \pm0.02$} & 6.7 {$\scriptstyle \pm1.4$} & 6.6 {$\scriptstyle \pm1.9$} & 1.08 {$\scriptstyle \pm0.64$} \\ 
					& & H-UBI & 0.26 {$\scriptstyle \pm0.02$} & 1.40 {$\scriptstyle \pm0.02$} & 9.0 {$\scriptstyle \pm1.5$} & 8.1 {$\scriptstyle \pm1.3$} & 1.46 {$\scriptstyle \pm0.91$} \\

					\cmidrule{2-8}
					& \multirow{3}*{\parbox{1.3cm}{$4$}}
					& spectral & 0.19 {$\scriptstyle \pm0.01$} & 1.35 {$\scriptstyle \pm0.01$} & 14.4 {$\scriptstyle \pm2.4$} & 8.4 {$\scriptstyle \pm2.1$} & 6.21 {$\scriptstyle \pm1.85$} \\ 
					& & $\eta$-Spectr. & 0.19 {$\scriptstyle \pm0.01$} & 1.35 {$\scriptstyle \pm0.01$} & 14.2 {$\scriptstyle \pm2.8$} & 8.7 {$\scriptstyle \pm2.6$} & 6.72 {$\scriptstyle \pm1.72$} \\ 
					& & H-UBI & 0.19 {$\scriptstyle \pm0.01$} & 1.35 {$\scriptstyle \pm0.01$} & 14.8 {$\scriptstyle \pm2.6$} & 8.8 {$\scriptstyle \pm2.2$} & 7.86 {$\scriptstyle \pm1.89$} \\

					\midrule
					
					\multirow{9}*{\parbox{1.cm}{$1$}}
					&
					\multirow{3}*{\parbox{1.3cm}{$1.33$}}
					& spectral & 0.62 {$\scriptstyle \pm0.02$} & 2.10 {$\scriptstyle \pm0.13$} & 15.3 {$\scriptstyle \pm2.4$} & 15.3 {$\scriptstyle \pm1.3$} & 9.20 {$\scriptstyle \pm8.34$} \\ 
					& & $\eta$-Spectr. & 0.32 {$\scriptstyle \pm0.06$} & 1.49 {$\scriptstyle \pm0.08$} & 2.3 {$\scriptstyle \pm1.1$} & 3.2 {$\scriptstyle \pm1.3$} & 18.45 {$\scriptstyle \pm6.40$} \\ 
					& & H-UBI & 0.32 {$\scriptstyle \pm0.11$} & 1.50 {$\scriptstyle \pm0.16$} & 2.6 {$\scriptstyle \pm2.3$} & 3.2 {$\scriptstyle \pm2.3$} & 26.30 {$\scriptstyle \pm3.52$} \\

					\cmidrule{2-8}
					&  \multirow{3}*{\parbox{1.3cm}{$2$}} 
					& spectral & 0.61 {$\scriptstyle \pm0.03$} & 2.46 {$\scriptstyle \pm0.12$} & 19.4 {$\scriptstyle \pm6.7$} & 12.6 {$\scriptstyle \pm4.9$} & 2.13 {$\scriptstyle \pm0.66$} \\ 
					& & $\eta$-Spectr. & 0.42 {$\scriptstyle \pm0.04$} & 1.92 {$\scriptstyle \pm0.13$} & 10.6 {$\scriptstyle \pm8.8$} & 10.1 {$\scriptstyle \pm6.6$} & 1.52 {$\scriptstyle \pm0.75$} \\ 
					& & H-UBI & 0.49 {$\scriptstyle \pm0.05$} & 2.06 {$\scriptstyle \pm0.15$} & 10.3 {$\scriptstyle \pm7.9$} & 8.4 {$\scriptstyle \pm6.1$} & 3.43 {$\scriptstyle \pm0.80$} \\

					\cmidrule{2-8}
					& \multirow{3}*{\parbox{1.3cm}{$4$}}
					& spectral & 0.37 {$\scriptstyle \pm0.02$} & 1.80 {$\scriptstyle \pm0.05$} & 19.0 {$\scriptstyle \pm4.9$} & 11.2 {$\scriptstyle \pm4.6$} & 1.44 {$\scriptstyle \pm0.30$} \\ 
					& & $\eta$-Spectr. & 0.35 {$\scriptstyle \pm0.01$} & 1.79 {$\scriptstyle \pm0.04$} & 20.0 {$\scriptstyle \pm6.8$} & 13.2 {$\scriptstyle \pm6.0$} & 0.77 {$\scriptstyle \pm0.17$} \\ 
					& & H-UBI & 0.37 {$\scriptstyle \pm0.02$} & 1.80 {$\scriptstyle \pm0.05$} & 18.9 {$\scriptstyle \pm4.9$} & 11.3 {$\scriptstyle \pm4.4$} & 1.83 {$\scriptstyle \pm0.41$} \\

					\bottomrule
				\end{tabular}
			\end{sc}
		\end{small}
	\end{center}
%	\vskip -.1in
\end{table}

\begin{table}[t]
	\caption{
		Results for Seriation with Duplications on dense, strong-R matrices (with several values of the parameter $\gamma$ and $N/n$), and noiseProp=$10\%$.
	}
	\label{tb:SerDupliDenseNoise10}
	%\vskip 0.15in
	\begin{center}
		\begin{small}
			\begin{sc}
				\begin{tabular}{llcccccc}
					\toprule
					$\gamma$ & $N/n$ & method & d2S & Huber (x1e-7) & meanDist & stdDist & Time (x1e-2s) \\ 
					\midrule
					\multirow{9}*{\parbox{1.cm}{$0.1$}}
					&
					\multirow{3}*{\parbox{1.3cm}{$1.33$}}
					& spectral & 0.13 {$\scriptstyle \pm 0.00 $} & 8.36 {$\scriptstyle \pm 0.01 $} & 8.0 {$\scriptstyle \pm 0.7 $} & 8.0 {$\scriptstyle \pm 1.0 $} & 1.26 {$\scriptstyle \pm 0.74 $} \\ 
					& & $\eta$-Spectr. & 0.13 {$\scriptstyle \pm 0.00 $} & 8.36 {$\scriptstyle \pm 0.01 $} & 7.9 {$\scriptstyle \pm 0.7 $} & 7.9 {$\scriptstyle \pm 1.1 $} & 1.23 {$\scriptstyle \pm 0.83 $} \\ 
					& & H-UBI & 0.13 {$\scriptstyle \pm 0.00 $} & 8.35 {$\scriptstyle \pm 0.02 $} & 7.1 {$\scriptstyle \pm 0.7 $} & 6.5 {$\scriptstyle \pm 1.0 $} & 1.43 {$\scriptstyle \pm 0.87 $} \\

					\cmidrule{2-8}
					&  \multirow{3}*{\parbox{1.3cm}{$2$}} 
					& spectral & 0.12 {$\scriptstyle \pm 0.00 $} & 8.38 {$\scriptstyle \pm 0.01 $} & 11.1 {$\scriptstyle \pm 0.9 $} & 8.4 {$\scriptstyle \pm 0.9 $} & 6.44 {$\scriptstyle \pm 4.31 $} \\ 
					& & $\eta$-Spectr. & 0.12 {$\scriptstyle \pm 0.00 $} & 8.38 {$\scriptstyle \pm 0.01 $} & 11.0 {$\scriptstyle \pm 0.8 $} & 8.4 {$\scriptstyle \pm 0.9 $} & 7.08 {$\scriptstyle \pm 4.86 $} \\ 
					& & H-UBI & 0.12 {$\scriptstyle \pm 0.00 $} & 8.38 {$\scriptstyle \pm 0.01 $} & 10.8 {$\scriptstyle \pm 0.9 $} & 8.2 {$\scriptstyle \pm 1.0 $} & 8.49 {$\scriptstyle \pm 5.50 $} \\

					\cmidrule{2-8}
					& \multirow{3}*{\parbox{1.3cm}{$4$}}
					& spectral & 0.11 {$\scriptstyle \pm 0.00 $} & 8.35 {$\scriptstyle \pm 0.01 $} & 15.6 {$\scriptstyle \pm 2.8 $} & 8.2 {$\scriptstyle \pm 2.9 $} & 5.54 {$\scriptstyle \pm 1.55 $} \\ 
					& & $\eta$-Spectr. & 0.11 {$\scriptstyle \pm 0.00 $} & 8.35 {$\scriptstyle \pm 0.01 $} & 15.5 {$\scriptstyle \pm 2.5 $} & 8.3 {$\scriptstyle \pm 3.0 $} & 6.19 {$\scriptstyle \pm 2.23 $} \\ 
					& & H-UBI & 0.11 {$\scriptstyle \pm 0.00 $} & 8.35 {$\scriptstyle \pm 0.01 $} & 15.5 {$\scriptstyle \pm 2.0 $} & 8.2 {$\scriptstyle \pm 1.8 $} & 6.98 {$\scriptstyle \pm 2.38 $} \\

					\midrule

					\multirow{9}*{\parbox{1.cm}{$0.5$}}
					&
					\multirow{3}*{\parbox{1.3cm}{$1.33$}}
					& spectral & 0.31 {$\scriptstyle \pm 0.03 $} & 1.38 {$\scriptstyle \pm 0.02 $} & 7.5 {$\scriptstyle \pm 1.6 $} & 9.0 {$\scriptstyle \pm 1.4 $} & 1.73 {$\scriptstyle \pm 0.50 $} \\ 
					& & $\eta$-Spectr. & 0.21 {$\scriptstyle \pm 0.02 $} & 1.31 {$\scriptstyle \pm 0.01 $} & 3.0 {$\scriptstyle \pm 0.6 $} & 4.3 {$\scriptstyle \pm 1.1 $} & 1.67 {$\scriptstyle \pm 0.79 $} \\ 
					& & H-UBI & 0.29 {$\scriptstyle \pm 0.03 $} & 1.36 {$\scriptstyle \pm 0.02 $} & 6.1 {$\scriptstyle \pm 1.6 $} & 7.5 {$\scriptstyle \pm 1.5 $} & 2.06 {$\scriptstyle \pm 0.81 $} \\

					\cmidrule{2-8}
					&  \multirow{3}*{\parbox{1.3cm}{$2$}} 
					& spectral & 0.29 {$\scriptstyle \pm 0.02 $} & 1.41 {$\scriptstyle \pm 0.02 $} & 9.8 {$\scriptstyle \pm 1.4 $} & 8.5 {$\scriptstyle \pm 1.2 $} & 1.08 {$\scriptstyle \pm 0.59 $} \\ 
					& & $\eta$-Spectr. & 0.25 {$\scriptstyle \pm 0.02 $} & 1.38 {$\scriptstyle \pm 0.02 $} & 7.0 {$\scriptstyle \pm 1.3 $} & 6.9 {$\scriptstyle \pm 1.9 $} & 0.90 {$\scriptstyle \pm 0.57 $} \\ 
					& & H-UBI & 0.28 {$\scriptstyle \pm 0.02 $} & 1.40 {$\scriptstyle \pm 0.02 $} & 9.3 {$\scriptstyle \pm 1.5 $} & 8.1 {$\scriptstyle \pm 1.3 $} & 1.25 {$\scriptstyle \pm 0.69 $} \\

					\cmidrule{2-8}
					& \multirow{3}*{\parbox{1.3cm}{$4$}}
					& spectral & 0.21 {$\scriptstyle \pm 0.01 $} & 1.35 {$\scriptstyle \pm 0.01 $} & 14.6 {$\scriptstyle \pm 2.6 $} & 8.5 {$\scriptstyle \pm 2.2 $} & 6.65 {$\scriptstyle \pm 2.23 $} \\ 
					& & $\eta$-Spectr. & 0.21 {$\scriptstyle \pm 0.01 $} & 1.35 {$\scriptstyle \pm 0.01 $} & 14.4 {$\scriptstyle \pm 3.3 $} & 8.8 {$\scriptstyle \pm 3.1 $} & 7.54 {$\scriptstyle \pm 2.72 $} \\ 
					& & H-UBI & 0.21 {$\scriptstyle \pm 0.01 $} & 1.35 {$\scriptstyle \pm 0.01 $} & 15.1 {$\scriptstyle \pm 2.5 $} & 8.8 {$\scriptstyle \pm 2.2 $} & 8.95 {$\scriptstyle \pm 3.53 $} \\

					\midrule
					
					\multirow{9}*{\parbox{1.cm}{$1$}}
					&
					\multirow{3}*{\parbox{1.3cm}{$1.33$}}
					& spectral & 0.64 {$\scriptstyle \pm 0.02 $} & 2.10 {$\scriptstyle \pm 0.13 $} & 15.4 {$\scriptstyle \pm 2.3 $} & 15.4 {$\scriptstyle \pm 1.3 $} & 8.93 {$\scriptstyle \pm 8.70 $} \\ 
					& & $\eta$-Spectr. & 0.35 {$\scriptstyle \pm 0.05 $} & 1.52 {$\scriptstyle \pm 0.07 $} & 2.5 {$\scriptstyle \pm 1.1 $} & 3.4 {$\scriptstyle \pm 1.3 $} & 20.46 {$\scriptstyle \pm 7.24 $} \\ 
					& & H-UBI & 0.36 {$\scriptstyle \pm 0.10 $} & 1.54 {$\scriptstyle \pm 0.16 $} & 2.9 {$\scriptstyle \pm 2.4 $} & 3.4 {$\scriptstyle \pm 2.4 $} & 29.20 {$\scriptstyle \pm 3.68 $} \\

					\cmidrule{2-8}
					&  \multirow{3}*{\parbox{1.3cm}{$2$}} 
					& spectral & 0.61 {$\scriptstyle \pm 0.03 $} & 2.46 {$\scriptstyle \pm 0.11 $} & 19.6 {$\scriptstyle \pm 6.6 $} & 12.9 {$\scriptstyle \pm 4.8 $} & 1.70 {$\scriptstyle \pm 0.36 $} \\ 
					& & $\eta$-Spectr. & 0.43 {$\scriptstyle \pm 0.04 $} & 1.92 {$\scriptstyle \pm 0.13 $} & 10.4 {$\scriptstyle \pm 8.6 $} & 9.9 {$\scriptstyle \pm 6.4 $} & 1.18 {$\scriptstyle \pm 0.54 $} \\ 
					& & H-UBI & 0.50 {$\scriptstyle \pm 0.04 $} & 2.07 {$\scriptstyle \pm 0.14 $} & 10.6 {$\scriptstyle \pm 7.8 $} & 8.7 {$\scriptstyle \pm 6.2 $} & 2.49 {$\scriptstyle \pm 0.24 $} \\

					\cmidrule{2-8}
					& \multirow{3}*{\parbox{1.3cm}{$4$}}
					& spectral & 0.38 {$\scriptstyle \pm 0.02 $} & 1.81 {$\scriptstyle \pm 0.05 $} & 19.7 {$\scriptstyle \pm 5.2 $} & 11.7 {$\scriptstyle \pm 5.0 $} & 1.59 {$\scriptstyle \pm 0.42 $} \\ 
					& & $\eta$-Spectr. & 0.36 {$\scriptstyle \pm 0.01 $} & 1.79 {$\scriptstyle \pm 0.04 $} & 20.0 {$\scriptstyle \pm 6.9 $} & 13.1 {$\scriptstyle \pm 6.0 $} & 0.87 {$\scriptstyle \pm 0.25 $} \\ 
					& & H-UBI & 0.38 {$\scriptstyle \pm 0.02 $} & 1.80 {$\scriptstyle \pm 0.05 $} & 19.5 {$\scriptstyle \pm 5.8 $} & 11.9 {$\scriptstyle \pm 5.4 $} & 1.85 {$\scriptstyle \pm 0.43 $} \\

					\bottomrule
				\end{tabular}
			\end{sc}
		\end{small}
	\end{center}
%	\vskip -.1in
\end{table}

\pushpagenumber
\begin{table}[t]
	\caption{
		Results for Seriation with Duplications on sparse, strong-R matrices (with several values of the parameter $s/s_{\text{lim}}$ and $N/n$), and $\delta=n/5$.
	}
	\label{tb:SerDupliSparseBD40-full}
	%\vskip 0.15in
	\begin{center}
		\begin{small}
			\begin{sc}
				\begin{tabular}{llcccccc}
					\toprule
					$s/s_{\text{lim}}$ & $N/n$ & method & d2S & Huber (x1e-7) & meanDist & stdDist & Time (x1e-2s) \\ 
					\midrule
					\multirow{9}*{\parbox{1.cm}{$0$}}
					&
					\multirow{3}*{\parbox{1.3cm}{$1.33$}}
					& spectral & 0.53 {$\scriptstyle \pm 0.08 $} & 1.67 {$\scriptstyle \pm 0.33 $} & 11.8 {$\scriptstyle \pm 3.5 $} & 13.2 {$\scriptstyle \pm 1.7 $} & 7.45 {$\scriptstyle \pm 4.08 $} \\ 
					& & $\eta$-Spectr. & 0.12 {$\scriptstyle \pm 0.06 $} & 0.76 {$\scriptstyle \pm 0.06 $} & 0.8 {$\scriptstyle \pm 0.8 $} & 2.4 {$\scriptstyle \pm 2.2 $} & 2.85 {$\scriptstyle \pm 1.78 $} \\ 
					& & H-UBI & 0.09 {$\scriptstyle \pm 0.06 $} & 0.74 {$\scriptstyle \pm 0.05 $} & 0.6 {$\scriptstyle \pm 0.6 $} & 1.8 {$\scriptstyle \pm 1.9 $} & 3.99 {$\scriptstyle \pm 2.76 $} \\

					\cmidrule{2-8}
					&  \multirow{3}*{\parbox{1.3cm}{$2$}} 
					& spectral & 0.38 {$\scriptstyle \pm 0.05 $} & 1.48 {$\scriptstyle \pm 0.26 $} & 10.3 {$\scriptstyle \pm 4.2 $} & 10.5 {$\scriptstyle \pm 2.8 $} & 1.30 {$\scriptstyle \pm 0.25 $} \\ 
					& & $\eta$-Spectr. & 0.21 {$\scriptstyle \pm 0.04 $} & 0.99 {$\scriptstyle \pm 0.12 $} & 4.1 {$\scriptstyle \pm 4.1 $} & 6.9 {$\scriptstyle \pm 3.9 $} & 0.50 {$\scriptstyle \pm 0.19 $} \\ 
					& & H-UBI & 0.19 {$\scriptstyle \pm 0.05 $} & 0.96 {$\scriptstyle \pm 0.14 $} & 4.0 {$\scriptstyle \pm 5.8 $} & 6.2 {$\scriptstyle \pm 4.6 $} & 0.79 {$\scriptstyle \pm 0.31 $} \\

					\cmidrule{2-8}
					& \multirow{3}*{\parbox{1.3cm}{$4$}}
					& spectral & 0.29 {$\scriptstyle \pm 0.02 $} & 1.45 {$\scriptstyle \pm 0.09 $} & 18.4 {$\scriptstyle \pm 4.5 $} & 11.8 {$\scriptstyle \pm 3.1 $} & 1.34 {$\scriptstyle \pm 0.23 $} \\ 
					& & $\eta$-Spectr. & 0.22 {$\scriptstyle \pm 0.02 $} & 1.29 {$\scriptstyle \pm 0.06 $} & 16.3 {$\scriptstyle \pm 6.8 $} & 12.2 {$\scriptstyle \pm 5.1 $} & 0.61 {$\scriptstyle \pm 0.14 $} \\ 
					& & H-UBI & 0.22 {$\scriptstyle \pm 0.02 $} & 1.26 {$\scriptstyle \pm 0.06 $} & 15.9 {$\scriptstyle \pm 7.2 $} & 12.0 {$\scriptstyle \pm 5.6 $} & 0.91 {$\scriptstyle \pm 0.25 $} \\

					\midrule

					\multirow{9}*{\parbox{1.cm}{$0.5$}}
					&
					\multirow{3}*{\parbox{1.3cm}{$1.33$}}
					& spectral & 0.52 {$\scriptstyle \pm 0.08 $} & 1.68 {$\scriptstyle \pm 0.33 $} & 11.1 {$\scriptstyle \pm 3.5 $} & 12.9 {$\scriptstyle \pm 1.8 $} & 8.79 {$\scriptstyle \pm 3.83 $} \\ 
					& & $\eta$-Spectr. & 0.21 {$\scriptstyle \pm 0.03 $} & 0.87 {$\scriptstyle \pm 0.06 $} & 1.3 {$\scriptstyle \pm 0.7 $} & 2.6 {$\scriptstyle \pm 2.0 $} & 4.15 {$\scriptstyle \pm 3.10 $} \\ 
					& & H-UBI & 0.19 {$\scriptstyle \pm 0.02 $} & 0.85 {$\scriptstyle \pm 0.04 $} & 0.9 {$\scriptstyle \pm 0.5 $} & 1.8 {$\scriptstyle \pm 1.5 $} & 5.95 {$\scriptstyle \pm 4.06 $} \\

					\cmidrule{2-8}
					&  \multirow{3}*{\parbox{1.3cm}{$2$}} 
					& spectral & 0.40 {$\scriptstyle \pm 0.04 $} & 1.55 {$\scriptstyle \pm 0.23 $} & 10.3 {$\scriptstyle \pm 3.8 $} & 10.5 {$\scriptstyle \pm 2.6 $} & 1.33 {$\scriptstyle \pm 0.24 $} \\ 
					& & $\eta$-Spectr. & 0.24 {$\scriptstyle \pm 0.03 $} & 1.07 {$\scriptstyle \pm 0.11 $} & 4.3 {$\scriptstyle \pm 4.0 $} & 7.0 {$\scriptstyle \pm 3.9 $} & 0.55 {$\scriptstyle \pm 0.22 $} \\ 
					& & H-UBI & 0.23 {$\scriptstyle \pm 0.05 $} & 1.06 {$\scriptstyle \pm 0.16 $} & 4.6 {$\scriptstyle \pm 7.0 $} & 6.4 {$\scriptstyle \pm 5.1 $} & 0.76 {$\scriptstyle \pm 0.33 $} \\

					\cmidrule{2-8}
					& \multirow{3}*{\parbox{1.3cm}{$4$}}
					& spectral & 0.30 {$\scriptstyle \pm 0.03 $} & 1.50 {$\scriptstyle \pm 0.09 $} & 19.0 {$\scriptstyle \pm 5.1 $} & 12.1 {$\scriptstyle \pm 3.5 $} & 1.35 {$\scriptstyle \pm 0.19 $} \\ 
					& & $\eta$-Spectr. & 0.24 {$\scriptstyle \pm 0.02 $} & 1.34 {$\scriptstyle \pm 0.06 $} & 16.3 {$\scriptstyle \pm 7.1 $} & 12.0 {$\scriptstyle \pm 5.1 $} & 0.65 {$\scriptstyle \pm 0.17 $} \\ 
					& & H-UBI & 0.24 {$\scriptstyle \pm 0.02 $} & 1.31 {$\scriptstyle \pm 0.06 $} & 15.8 {$\scriptstyle \pm 7.1 $} & 11.8 {$\scriptstyle \pm 5.6 $} & 0.97 {$\scriptstyle \pm 0.26 $} \\

					\midrule
					
					\multirow{9}*{\parbox{1.cm}{$1$}}
					&
					\multirow{3}*{\parbox{1.3cm}{$1.33$}}
					& spectral & 0.51 {$\scriptstyle \pm 0.07 $} & 1.65 {$\scriptstyle \pm 0.30 $} & 9.9 {$\scriptstyle \pm 3.1 $} & 12.4 {$\scriptstyle \pm 1.8 $} & 1.03 {$\scriptstyle \pm 0.28 $} \\ 
					& & $\eta$-Spectr. & 0.26 {$\scriptstyle \pm 0.02 $} & 0.95 {$\scriptstyle \pm 0.05 $} & 1.5 {$\scriptstyle \pm 0.6 $} & 2.7 {$\scriptstyle \pm 1.9 $} & 0.41 {$\scriptstyle \pm 0.33 $} \\ 
					& & H-UBI & 0.25 {$\scriptstyle \pm 0.02 $} & 0.94 {$\scriptstyle \pm 0.04 $} & 1.2 {$\scriptstyle \pm 0.5 $} & 2.1 {$\scriptstyle \pm 1.7 $} & 0.59 {$\scriptstyle \pm 0.72 $} \\

					\cmidrule{2-8}
					&  \multirow{3}*{\parbox{1.3cm}{$2$}} 
					& spectral & 0.39 {$\scriptstyle \pm 0.04 $} & 1.51 {$\scriptstyle \pm 0.18 $} & 9.2 {$\scriptstyle \pm 3.9 $} & 10.0 {$\scriptstyle \pm 2.7 $} & 1.24 {$\scriptstyle \pm 0.25 $} \\ 
					& & $\eta$-Spectr. & 0.27 {$\scriptstyle \pm 0.04 $} & 1.13 {$\scriptstyle \pm 0.13 $} & 4.5 {$\scriptstyle \pm 5.2 $} & 6.9 {$\scriptstyle \pm 4.4 $} & 0.55 {$\scriptstyle \pm 0.23 $} \\ 
					& & H-UBI & 0.26 {$\scriptstyle \pm 0.04 $} & 1.11 {$\scriptstyle \pm 0.14 $} & 4.3 {$\scriptstyle \pm 6.3 $} & 6.3 {$\scriptstyle \pm 4.7 $} & 0.80 {$\scriptstyle \pm 0.36 $} \\

					\cmidrule{2-8}
					& \multirow{3}*{\parbox{1.3cm}{$4$}}
					& spectral & 0.30 {$\scriptstyle \pm 0.02 $} & 1.50 {$\scriptstyle \pm 0.09 $} & 18.7 {$\scriptstyle \pm 5.0 $} & 12.1 {$\scriptstyle \pm 3.3 $} & 1.29 {$\scriptstyle \pm 0.18 $} \\ 
					& & $\eta$-Spectr. & 0.25 {$\scriptstyle \pm 0.02 $} & 1.37 {$\scriptstyle \pm 0.06 $} & 16.5 {$\scriptstyle \pm 7.2 $} & 12.1 {$\scriptstyle \pm 5.2 $} & 0.64 {$\scriptstyle \pm 0.20 $} \\ 
					& & H-UBI & 0.25 {$\scriptstyle \pm 0.01 $} & 1.34 {$\scriptstyle \pm 0.06 $} & 15.4 {$\scriptstyle \pm 6.6 $} & 11.4 {$\scriptstyle \pm 4.9 $} & 0.91 {$\scriptstyle \pm 0.27 $} \\

					\midrule
					
					\multirow{9}*{\parbox{1.cm}{$2.5$}}
					&
					\multirow{3}*{\parbox{1.3cm}{$1.33$}}
					& spectral & 0.51 {$\scriptstyle \pm 0.05 $} & 1.71 {$\scriptstyle \pm 0.23 $} & 8.1 {$\scriptstyle \pm 2.4 $} & 11.1 {$\scriptstyle \pm 2.0 $} & 1.79 {$\scriptstyle \pm 1.50 $} \\ 
					& & $\eta$-Spectr. & 0.35 {$\scriptstyle \pm 0.01 $} & 1.25 {$\scriptstyle \pm 0.04 $} & 1.9 {$\scriptstyle \pm 0.4 $} & 2.7 {$\scriptstyle \pm 1.4 $} & 0.90 {$\scriptstyle \pm 1.39 $} \\ 
					& & H-UBI & 0.35 {$\scriptstyle \pm 0.01 $} & 1.24 {$\scriptstyle \pm 0.04 $} & 1.8 {$\scriptstyle \pm 0.4 $} & 2.4 {$\scriptstyle \pm 1.3 $} & 1.20 {$\scriptstyle \pm 1.48 $} \\

					\cmidrule{2-8}
					&  \multirow{3}*{\parbox{1.3cm}{$2$}} 
					& spectral & 0.43 {$\scriptstyle \pm 0.03 $} & 1.69 {$\scriptstyle \pm 0.13 $} & 9.3 {$\scriptstyle \pm 4.5 $} & 10.2 {$\scriptstyle \pm 3.4 $} & 1.24 {$\scriptstyle \pm 0.22 $} \\ 
					& & $\eta$-Spectr. & 0.34 {$\scriptstyle \pm 0.03 $} & 1.39 {$\scriptstyle \pm 0.13 $} & 5.1 {$\scriptstyle \pm 6.3 $} & 7.0 {$\scriptstyle \pm 4.8 $} & 0.49 {$\scriptstyle \pm 0.18 $} \\ 
					& & H-UBI & 0.34 {$\scriptstyle \pm 0.04 $} & 1.38 {$\scriptstyle \pm 0.15 $} & 5.1 {$\scriptstyle \pm 7.0 $} & 6.3 {$\scriptstyle \pm 5.1 $} & 0.75 {$\scriptstyle \pm 0.30 $} \\

					\cmidrule{2-8}
					& \multirow{3}*{\parbox{1.3cm}{$4$}}	
					& spectral & 0.36 {$\scriptstyle \pm 0.02 $} & 1.64 {$\scriptstyle \pm 0.07 $} & 19.1 {$\scriptstyle \pm 5.3 $} & 12.1 {$\scriptstyle \pm 3.7 $} & 1.30 {$\scriptstyle \pm 0.20 $} \\ 
					& & $\eta$-Spectr. & 0.32 {$\scriptstyle \pm 0.01 $} & 1.52 {$\scriptstyle \pm 0.06 $} & 16.6 {$\scriptstyle \pm 7.2 $} & 12.1 {$\scriptstyle \pm 5.3 $} & 0.64 {$\scriptstyle \pm 0.15 $} \\ 
					& & H-UBI & 0.32 {$\scriptstyle \pm 0.01 $} & 1.49 {$\scriptstyle \pm 0.05 $} & 15.6 {$\scriptstyle \pm 6.3 $} & 11.3 {$\scriptstyle \pm 4.6 $} & 0.97 {$\scriptstyle \pm 0.29 $} \\

					\midrule
					
					\multirow{9}*{\parbox{1.cm}{$5$}}
					&
					\multirow{3}*{\parbox{1.3cm}{$1.33$}}
					& spectral & 0.54 {$\scriptstyle \pm 0.02 $} & 2.01 {$\scriptstyle \pm 0.09 $} & 6.7 {$\scriptstyle \pm 1.0 $} & 9.0 {$\scriptstyle \pm 1.8 $} & 1.08 {$\scriptstyle \pm 0.14 $} \\ 
					& & $\eta$-Spectr. & 0.45 {$\scriptstyle \pm 0.01 $} & 1.77 {$\scriptstyle \pm 0.03 $} & 2.7 {$\scriptstyle \pm 0.3 $} & 3.0 {$\scriptstyle \pm 1.1 $} & 0.43 {$\scriptstyle \pm 0.34 $} \\ 
					& & H-UBI & 0.45 {$\scriptstyle \pm 0.01 $} & 1.77 {$\scriptstyle \pm 0.03 $} & 2.8 {$\scriptstyle \pm 0.3 $} & 3.1 {$\scriptstyle \pm 1.0 $} & 0.98 {$\scriptstyle \pm 0.54 $} \\

					\cmidrule{2-8}
					&  \multirow{3}*{\parbox{1.3cm}{$2$}} 
					& spectral & 0.49 {$\scriptstyle \pm 0.02 $} & 2.00 {$\scriptstyle \pm 0.10 $} & 9.1 {$\scriptstyle \pm 5.0 $} & 9.5 {$\scriptstyle \pm 3.5 $} & 1.21 {$\scriptstyle \pm 0.20 $} \\ 
					& & $\eta$-Spectr. & 0.43 {$\scriptstyle \pm 0.03 $} & 1.83 {$\scriptstyle \pm 0.11 $} & 5.5 {$\scriptstyle \pm 6.4 $} & 6.4 {$\scriptstyle \pm 4.7 $} & 0.45 {$\scriptstyle \pm 0.14 $} \\ 
					& & H-UBI & 0.43 {$\scriptstyle \pm 0.03 $} & 1.83 {$\scriptstyle \pm 0.11 $} & 5.5 {$\scriptstyle \pm 6.2 $} & 6.3 {$\scriptstyle \pm 4.5 $} & 0.89 {$\scriptstyle \pm 0.42 $} \\

					\cmidrule{2-8}
					& \multirow{3}*{\parbox{1.3cm}{$4$}}
					& spectral & 0.45 {$\scriptstyle \pm 0.01 $} & 1.83 {$\scriptstyle \pm 0.07 $} & 19.7 {$\scriptstyle \pm 5.3 $} & 12.3 {$\scriptstyle \pm 3.9 $} & 1.25 {$\scriptstyle \pm 0.22 $} \\ 
					& & $\eta$-Spectr. & 0.43 {$\scriptstyle \pm 0.01 $} & 1.76 {$\scriptstyle \pm 0.06 $} & 17.5 {$\scriptstyle \pm 7.1 $} & 11.8 {$\scriptstyle \pm 4.9 $} & 0.61 {$\scriptstyle \pm 0.16 $} \\ 
					& & H-UBI & 0.43 {$\scriptstyle \pm 0.01 $} & 1.74 {$\scriptstyle \pm 0.05 $} & 16.5 {$\scriptstyle \pm 5.9 $} & 11.2 {$\scriptstyle \pm 4.5 $} & 0.87 {$\scriptstyle \pm 0.29 $} \\

					\bottomrule
				\end{tabular}
			\end{sc}
		\end{small}
	\end{center}
	\vskip -.1in
\end{table}

\begin{table}[t]
	\caption{
		Results for Seriation with Duplications on sparse, strong-R matrices (with several values of the parameter $s/s_{\text{lim}}$ and $N/n$), and $\delta=n/10$.
	}
	\label{tb:SerDupliSparseBD20-full}
	%\vskip 0.15in
	\begin{center}
		\begin{small}
			\begin{sc}
				\begin{tabular}{llcccccc}
					\toprule
					$s/s_{\text{lim}}$ & $N/n$ & method & d2S & Huber (x1e-7) & meanDist & stdDist & Time (x1e-2s) \\ 
					\midrule
					\multirow{9}*{\parbox{1.cm}{$0$}}
					&
					\multirow{3}*{\parbox{1.3cm}{$1.33$}}
					& spectral & 0.85 {$\scriptstyle \pm 0.04 $} & 6.42 {$\scriptstyle \pm 0.63 $} & 29.1 {$\scriptstyle \pm 14.3 $} & 23.4 {$\scriptstyle \pm 8.1 $} & 2.13 {$\scriptstyle \pm 3.72 $} \\ 
					& & $\eta$-Spectr. & 0.28 {$\scriptstyle \pm 0.17 $} & 1.86 {$\scriptstyle \pm 1.13 $} & 5.4 {$\scriptstyle \pm 12.2 $} & 6.6 {$\scriptstyle \pm 8.5 $} & 5.67 {$\scriptstyle \pm 3.65 $} \\ 
					& & H-UBI & 0.29 {$\scriptstyle \pm 0.22 $} & 2.09 {$\scriptstyle \pm 1.66 $} & 8.5 {$\scriptstyle \pm 17.2 $} & 8.4 {$\scriptstyle \pm 11.7 $} & 10.01 {$\scriptstyle \pm 5.36 $} \\

					\cmidrule{2-8}
					&  \multirow{3}*{\parbox{1.3cm}{$2$}} 
					& spectral & 0.87 {$\scriptstyle \pm 0.02 $} & 1.01 {$\scriptstyle \pm 0.06 $} & 44.7 {$\scriptstyle \pm 11.9 $} & 26.7 {$\scriptstyle \pm 7.4 $} & 9.01 {$\scriptstyle \pm 13.31 $} \\ 
					& & $\eta$-Spectr. & 0.49 {$\scriptstyle \pm 0.10 $} & 0.43 {$\scriptstyle \pm 0.11 $} & 26.3 {$\scriptstyle \pm 17.2 $} & 21.1 {$\scriptstyle \pm 10.8 $} & 85.15 {$\scriptstyle \pm 27.35 $} \\ 
					& & H-UBI & 0.53 {$\scriptstyle \pm 0.14 $} & 0.52 {$\scriptstyle \pm 0.21 $} & 28.9 {$\scriptstyle \pm 17.9 $} & 22.3 {$\scriptstyle \pm 11.8 $} & 176.26 {$\scriptstyle \pm 39.43 $} \\

					\cmidrule{2-8}
					& \multirow{3}*{\parbox{1.3cm}{$4$}}
					& spectral & 0.78 {$\scriptstyle \pm 0.05 $} & 1.19 {$\scriptstyle \pm 0.10 $} & 47.5 {$\scriptstyle \pm 7.7 $} & 21.3 {$\scriptstyle \pm 5.1 $} & 1.04 {$\scriptstyle \pm 0.62 $} \\ 
					& & $\eta$-Spectr. & 0.39 {$\scriptstyle \pm 0.02 $} & 0.44 {$\scriptstyle \pm 0.02 $} & 29.6 {$\scriptstyle \pm 7.2 $} & 18.0 {$\scriptstyle \pm 5.4 $} & 0.60 {$\scriptstyle \pm 0.13 $} \\ 
					& & H-UBI & 0.50 {$\scriptstyle \pm 0.17 $} & 0.65 {$\scriptstyle \pm 0.33 $} & 33.1 {$\scriptstyle \pm 10.6 $} & 18.6 {$\scriptstyle \pm 6.0 $} & 1.76 {$\scriptstyle \pm 0.40 $} \\

					\midrule

					\multirow{9}*{\parbox{1.cm}{$0.5$}}
					&
					\multirow{3}*{\parbox{1.3cm}{$1.33$}}
					& spectral & 0.86 {$\scriptstyle \pm 0.04 $} & 6.90 {$\scriptstyle \pm 0.63 $} & 29.6 {$\scriptstyle \pm 14.6 $} & 23.7 {$\scriptstyle \pm 8.3 $} & 2.07 {$\scriptstyle \pm 3.74 $} \\ 
					& & $\eta$-Spectr. & 0.37 {$\scriptstyle \pm 0.13 $} & 2.38 {$\scriptstyle \pm 1.09 $} & 6.0 {$\scriptstyle \pm 12.8 $} & 7.2 {$\scriptstyle \pm 9.1 $} & 6.62 {$\scriptstyle \pm 3.44 $} \\ 
					& & H-UBI & 0.37 {$\scriptstyle \pm 0.17 $} & 2.46 {$\scriptstyle \pm 1.51 $} & 7.6 {$\scriptstyle \pm 16.0 $} & 7.4 {$\scriptstyle \pm 11.0 $} & 10.88 {$\scriptstyle \pm 4.48 $} \\

					\cmidrule{2-8}
					&  \multirow{3}*{\parbox{1.3cm}{$2$}} 
					& spectral & 0.87 {$\scriptstyle \pm 0.01 $} & 1.05 {$\scriptstyle \pm 0.06 $} & 45.1 {$\scriptstyle \pm 12.4 $} & 27.0 {$\scriptstyle \pm 7.4 $} & 8.42 {$\scriptstyle \pm 3.70 $} \\ 
					& & $\eta$-Spectr. & 0.51 {$\scriptstyle \pm 0.09 $} & 0.47 {$\scriptstyle \pm 0.10 $} & 27.1 {$\scriptstyle \pm 17.4 $} & 21.8 {$\scriptstyle \pm 11.4 $} & 89.50 {$\scriptstyle \pm 28.54 $} \\ 
					& & H-UBI & 0.56 {$\scriptstyle \pm 0.13 $} & 0.58 {$\scriptstyle \pm 0.21 $} & 29.5 {$\scriptstyle \pm 18.4 $} & 22.5 {$\scriptstyle \pm 12.0 $} & 175.28 {$\scriptstyle \pm 48.61 $} \\

					\cmidrule{2-8}
					& \multirow{3}*{\parbox{1.3cm}{$4$}}
					& spectral & 0.78 {$\scriptstyle \pm 0.05 $} & 1.23 {$\scriptstyle \pm 0.11 $} & 47.1 {$\scriptstyle \pm 7.8 $} & 20.7 {$\scriptstyle \pm 5.2 $} & 1.08 {$\scriptstyle \pm 0.60 $} \\ 
					& & $\eta$-Spectr. & 0.40 {$\scriptstyle \pm 0.02 $} & 0.46 {$\scriptstyle \pm 0.02 $} & 29.9 {$\scriptstyle \pm 7.1 $} & 18.6 {$\scriptstyle \pm 5.5 $} & 0.62 {$\scriptstyle \pm 0.15 $} \\ 
					& & H-UBI & 0.49 {$\scriptstyle \pm 0.16 $} & 0.64 {$\scriptstyle \pm 0.32 $} & 31.8 {$\scriptstyle \pm 9.8 $} & 18.2 {$\scriptstyle \pm 6.1 $} & 1.78 {$\scriptstyle \pm 0.42 $} \\

					\midrule
					
					\multirow{9}*{\parbox{1.cm}{$1$}}
					&
					\multirow{3}*{\parbox{1.3cm}{$1.33$}}
					& spectral & 0.88 {$\scriptstyle \pm 0.03 $} & 7.67 {$\scriptstyle \pm 0.69 $} & 29.4 {$\scriptstyle \pm 14.3 $} & 23.5 {$\scriptstyle \pm 8.2 $} & 1.57 {$\scriptstyle \pm 3.20 $} \\ 
					& & $\eta$-Spectr. & 0.42 {$\scriptstyle \pm 0.11 $} & 2.79 {$\scriptstyle \pm 1.26 $} & 5.7 {$\scriptstyle \pm 12.5 $} & 6.6 {$\scriptstyle \pm 8.9 $} & 6.34 {$\scriptstyle \pm 3.79 $} \\ 
					& & H-UBI & 0.41 {$\scriptstyle \pm 0.14 $} & 2.81 {$\scriptstyle \pm 1.45 $} & 6.4 {$\scriptstyle \pm 14.6 $} & 6.5 {$\scriptstyle \pm 10.0 $} & 10.22 {$\scriptstyle \pm 4.59 $} \\

					\cmidrule{2-8}
					&  \multirow{3}*{\parbox{1.3cm}{$2$}} 
					& spectral & 0.87 {$\scriptstyle \pm 0.01 $} & 1.14 {$\scriptstyle \pm 0.06 $} & 44.7 {$\scriptstyle \pm 12.2 $} & 26.7 {$\scriptstyle \pm 7.1 $} & 1.53 {$\scriptstyle \pm 2.76 $} \\ 
					& & $\eta$-Spectr. & 0.51 {$\scriptstyle \pm 0.08 $} & 0.51 {$\scriptstyle \pm 0.12 $} & 26.1 {$\scriptstyle \pm 17.7 $} & 21.1 {$\scriptstyle \pm 11.7 $} & 8.06 {$\scriptstyle \pm 2.78 $} \\ 
					& & H-UBI & 0.58 {$\scriptstyle \pm 0.13 $} & 0.64 {$\scriptstyle \pm 0.23 $} & 29.0 {$\scriptstyle \pm 18.4 $} & 21.6 {$\scriptstyle \pm 11.9 $} & 17.74 {$\scriptstyle \pm 4.40 $} \\

					\cmidrule{2-8}
					& \multirow{3}*{\parbox{1.3cm}{$4$}}
					& spectral & 0.75 {$\scriptstyle \pm 0.06 $} & 1.26 {$\scriptstyle \pm 0.14 $} & 44.6 {$\scriptstyle \pm 7.7 $} & 20.5 {$\scriptstyle \pm 5.2 $} & 1.21 {$\scriptstyle \pm 0.55 $} \\ 
					& & $\eta$-Spectr. & 0.40 {$\scriptstyle \pm 0.01 $} & 0.48 {$\scriptstyle \pm 0.02 $} & 29.4 {$\scriptstyle \pm 7.0 $} & 18.3 {$\scriptstyle \pm 6.2 $} & 0.63 {$\scriptstyle \pm 0.16 $} \\ 
					& & H-UBI & 0.42 {$\scriptstyle \pm 0.08 $} & 0.51 {$\scriptstyle \pm 0.18 $} & 28.8 {$\scriptstyle \pm 8.6 $} & 18.0 {$\scriptstyle \pm 6.4 $} & 1.59 {$\scriptstyle \pm 0.38 $} \\

					\midrule
					
					\multirow{9}*{\parbox{1.cm}{$2.5$}}
					&
					\multirow{3}*{\parbox{1.3cm}{$1.33$}}
					& spectral & 0.90 {$\scriptstyle \pm 0.03 $} & 9.46 {$\scriptstyle \pm 0.74 $} & 30.2 {$\scriptstyle \pm 14.5 $} & 23.8 {$\scriptstyle \pm 8.6 $} & 1.76 {$\scriptstyle \pm 3.33 $} \\ 
					& & $\eta$-Spectr. & 0.51 {$\scriptstyle \pm 0.05 $} & 4.19 {$\scriptstyle \pm 0.66 $} & 3.9 {$\scriptstyle \pm 8.3 $} & 5.1 {$\scriptstyle \pm 6.2 $} & 6.31 {$\scriptstyle \pm 3.76 $} \\ 
					& & H-UBI & 0.54 {$\scriptstyle \pm 0.11 $} & 4.58 {$\scriptstyle \pm 1.53 $} & 9.4 {$\scriptstyle \pm 17.5 $} & 8.5 {$\scriptstyle \pm 12.5 $} & 11.94 {$\scriptstyle \pm 4.72 $} \\

					\cmidrule{2-8}
					&  \multirow{3}*{\parbox{1.3cm}{$2$}} 
					& spectral & 0.88 {$\scriptstyle \pm 0.01 $} & 1.33 {$\scriptstyle \pm 0.06 $} & 44.8 {$\scriptstyle \pm 12.2 $} & 26.9 {$\scriptstyle \pm 7.1 $} & 2.28 {$\scriptstyle \pm 3.51 $} \\ 
					& & $\eta$-Spectr. & 0.55 {$\scriptstyle \pm 0.05 $} & 0.63 {$\scriptstyle \pm 0.10 $} & 26.0 {$\scriptstyle \pm 17.3 $} & 21.1 {$\scriptstyle \pm 11.5 $} & 7.16 {$\scriptstyle \pm 2.69 $} \\ 
					& & H-UBI & 0.61 {$\scriptstyle \pm 0.11 $} & 0.75 {$\scriptstyle \pm 0.25 $} & 28.0 {$\scriptstyle \pm 18.9 $} & 21.2 {$\scriptstyle \pm 12.5 $} & 18.28 {$\scriptstyle \pm 4.10 $} \\

					\cmidrule{2-8}
					& \multirow{3}*{\parbox{1.3cm}{$4$}}	
					& spectral & 0.72 {$\scriptstyle \pm 0.06 $} & 1.31 {$\scriptstyle \pm 0.19 $} & 41.8 {$\scriptstyle \pm 8.7 $} & 20.6 {$\scriptstyle \pm 5.4 $} & 1.35 {$\scriptstyle \pm 0.41 $} \\ 
					& & $\eta$-Spectr. & 0.45 {$\scriptstyle \pm 0.01 $} & 0.55 {$\scriptstyle \pm 0.03 $} & 31.5 {$\scriptstyle \pm 7.1 $} & 19.1 {$\scriptstyle \pm 5.3 $} & 0.62 {$\scriptstyle \pm 0.16 $} \\ 
					& & H-UBI & 0.44 {$\scriptstyle \pm 0.01 $} & 0.53 {$\scriptstyle \pm 0.03 $} & 28.2 {$\scriptstyle \pm 7.9 $} & 17.8 {$\scriptstyle \pm 6.2 $} & 1.49 {$\scriptstyle \pm 0.38 $} \\

					\midrule
					
					\multirow{9}*{\parbox{1.cm}{$5$}}
					&
					\multirow{3}*{\parbox{1.3cm}{$1.33$}}
					& spectral & 0.93 {$\scriptstyle \pm 0.02 $} & 1.33 {$\scriptstyle \pm 0.09 $} & 31.4 {$\scriptstyle \pm 13.8 $} & 24.9 {$\scriptstyle \pm 8.8 $} & 2.60 {$\scriptstyle \pm 4.09 $} \\ 
					& & $\eta$-Spectr. & 0.64 {$\scriptstyle \pm 0.04 $} & 0.75 {$\scriptstyle \pm 0.07 $} & 6.5 {$\scriptstyle \pm 11.3 $} & 7.0 {$\scriptstyle \pm 8.9 $} & 6.73 {$\scriptstyle \pm 4.22 $} \\ 
					& & H-UBI & 0.68 {$\scriptstyle \pm 0.10 $} & 0.83 {$\scriptstyle \pm 0.19 $} & 12.4 {$\scriptstyle \pm 18.4 $} & 10.6 {$\scriptstyle \pm 13.2 $} & 16.54 {$\scriptstyle \pm 3.74 $} \\

					\cmidrule{2-8}
					&  \multirow{3}*{\parbox{1.3cm}{$2$}} 
					& spectral & 0.88 {$\scriptstyle \pm 0.01 $} & 1.73 {$\scriptstyle \pm 0.08 $} & 44.4 {$\scriptstyle \pm 11.6 $} & 27.0 {$\scriptstyle \pm 6.9 $} & 4.87 {$\scriptstyle \pm 5.68 $} \\ 
					& & $\eta$-Spectr. & 0.63 {$\scriptstyle \pm 0.03 $} & 0.87 {$\scriptstyle \pm 0.08 $} & 26.5 {$\scriptstyle \pm 16.0 $} & 21.3 {$\scriptstyle \pm 10.4 $} & 7.03 {$\scriptstyle \pm 2.69 $} \\ 
					& & H-UBI & 0.65 {$\scriptstyle \pm 0.06 $} & 0.92 {$\scriptstyle \pm 0.16 $} & 26.0 {$\scriptstyle \pm 17.9 $} & 20.0 {$\scriptstyle \pm 12.1 $} & 19.58 {$\scriptstyle \pm 2.81 $} \\

					\cmidrule{2-8}
					& \multirow{3}*{\parbox{1.3cm}{$4$}}
					& spectral & 0.66 {$\scriptstyle \pm 0.04 $} & 1.23 {$\scriptstyle \pm 0.22 $} & 35.4 {$\scriptstyle \pm 7.3 $} & 18.5 {$\scriptstyle \pm 5.3 $} & 1.42 {$\scriptstyle \pm 0.19 $} \\ 
					& & $\eta$-Spectr. & 0.57 {$\scriptstyle \pm 0.01 $} & 0.68 {$\scriptstyle \pm 0.03 $} & 30.9 {$\scriptstyle \pm 5.6 $} & 19.0 {$\scriptstyle \pm 4.9 $} & 0.58 {$\scriptstyle \pm 0.14 $} \\ 
					& & H-UBI & 0.56 {$\scriptstyle \pm 0.01 $} & 0.66 {$\scriptstyle \pm 0.03 $} & 28.6 {$\scriptstyle \pm 7.1 $} & 17.2 {$\scriptstyle \pm 5.8 $} & 0.94 {$\scriptstyle \pm 0.29 $} \\

					\bottomrule
				\end{tabular}
			\end{sc}
		\end{small}
	\end{center}
	\vskip -.1in
\end{table}

\end{document}